\newcommand {\bR}{\mathbb R}
\newcommand {\bN}{\mathbb N}
\newcommand {\bZ}{\mathbb Z}
\newcommand {\bC}{\mathbb C}
\newcommand {\bQ}{\mathbb Q}
\newcommand {\Id}{\operatorname{Id}}
\newcommand {\be}{\mathbf{1}}
\newcommand{\cA}{\mathcal{A}}
\newcommand{\cH}{\mathcal{H}}
\newcommand{\cR}{\mathcal{R}}
\newcommand{\wt}{\operatorname{wt}}
\newcommand{\fh}{\mathfrak{h}}
\newcommand{\fH}{\mathfrak{H}}
\newcommand{\oP}{\overline{P}}
\newcommand {\z}{\zeta}
\newcommand{\sh}{\shuffle}
\newcommand{\shl}{\shuffle_\lambda}
\newcommand{\tsh}{\,\tilde{\shuffle}\,}
\newcommand{\ssh}{\shuffle_\star}
\newcommand{\oDelta}{\overline{\Delta}}
\newcommand{\tast}{\tilde{\ast}}
\long\def\ignore#1{}
\newtheorem{theorem}{Theorem}[section]
\newtheorem {lemma}[theorem]{Lemma}
\newtheorem {proposition}[theorem]{Proposition}
\newtheorem {corollary}[theorem]{Corollary}
\newtheorem {remark}[theorem]{Remark}
\begin{document}

\title[Duality and ($q$-)MZV\lowercase{s}]{Duality and ($q$-)multiple zeta values}

\author[K.~Ebrahimi-Fard]{Kurusch Ebrahimi-Fard}
\address{{ICMAT,C/Nicol\'as Cabrera, no.~13-15, 28049 Madrid, Spain}. {\tiny{On leave from UHA, Mulhouse, France.}}}
\email{kurusch@icmat.es, kurusch.ebrahimi-fard@uha.fr}
\urladdr{www.icmat.es/kurusch}

\author[D.~Manchon]{Dominique Manchon}
\address{Univ. Blaise Pascal, C.N.R.S.-UMR 6620, 3 place Vasar\'ely, CS 60026, 63178 Aubi\`ere, France}
\email{manchon@math.univ-bpclermont.fr}
\urladdr{http://math.univ-bpclermont.fr/~manchon/}

\author[J.~Singer]{Johannes Singer}
\address{Department Mathematik, Friedrich-Alexander-Universit\"at Erlangen-N\"urnberg, Cauerstra\ss e 11, 91058 Erlangen, Germany}
\email{singer@math.fau.de}
\urladdr{www.math.fau.de/singer}
\keywords{multiple zeta values, $q$-analogues, duality, double shuffle relations, derivation relations, Hoffman--Ohno relation, Hopf algebra, infinitesimal bialgebra, Rota--Baxter algebra}
\subjclass[2010]{11M32,16T05,05A30}

\date{\today}

\begin{abstract}
Following Bachmann's recent work on bi-brackets and multiple Eisenstein series, Zudilin introduced the notion of multiple $q$-zeta brackets, which provides a $q$-analog of multiple zeta values possessing both shuffle as well as quasi-shuffle relations. The corresponding products are related in terms of duality. In this work we study Zudilin's duality construction in the context of classical multiple zeta values as well as various $q$-analogs of multiple zeta values. Regarding the former we identify the derivation relation of order two with a Hoffman--Ohno type relation. Then we describe relations between the Ohno--Okuda--Zudilin $q$-multiple zeta values and the Schlesinger--Zudilin $q$-multiple zeta values. 
\end{abstract}

\maketitle

\tableofcontents

%%%%%%%%%%%%%%%%%%%%%%%%%%%%%%%%%%%%%%%%%%%%%

\section{Introduction}
\label{sect:intro}

\emph{Multiple zeta values} (MZVs) are nested sums of depth $n \in \mathbb{N}$ and weight $\sum_{i=1}^n k_i > n$, for positive integers $k_1 > 1,k_i>0$, $i=2,\ldots,n$
\begin{align}
\label{eq:defMZVs}
 \z(k_1,\ldots,k_n):=\sum_{m_1>\cdots >m_n>0}\frac{1}{m_1^{k_1}\cdots m_n^{k_n}}. 
\end{align}
They arise in various contexts, e.g., number theory, algebraic geometry, algebra, as well as  knot theory. The origins of the modern systematic treatment of MZVs can be traced to \cite{Ecalle81,Zagier94,Hoffman97}. Moreover, MZVs and their generalisations, i.e., multiple polylogarithms, play an important role in quantum field theory \cite{Broadhurst13}.

\smallskip

It is well-known that the nested sums in \eqref{eq:defMZVs} can also be written as iterated Chen integrals, which induces \emph{shuffle relations} among MZVs thanks to integration by parts. Multiplying MZVs directly, on the other hand, yields \emph{quasi-shuffle relations}. Comparison of the different products results in a large collection of so-called \emph{double shuffle relations} among MZVs \cite{Hoffman03,Zudilin03,Hoffman05,Waldschmidt11}. Amending a regularization procedure with respect to the formally defined single zeta value $t:=\zeta(1)$ leads to \emph{regularized double shuffle relations}, which conjecturally give all linear relations among MZVs \cite{Ihara06}.  

A particular change of variables in the context of the integral representation of MZVs gives way to a peculiar set of relations subsumed under the notion of {\it{duality}} for MZVs. As an example we state the identity 
$$
	\zeta(5,1) = \zeta(3,1,1,1).
$$
A precise understanding of the mathematical relation between the notion of duality and the aforementioned double shuffle structure is part of a class of important open problems in the theory of MZVs. See \cite{Ihara06} for details. 
  
\smallskip

Generalizations of the real-valued nested sums in \eqref{eq:defMZVs} to power series in $\bQ[[q]]$ are commonly known as  $q$-analogues of MZVs. A particular example of such $q$-MZVs is due to Bradley and Zhao \cite{Bradley05,Zhao07}, who extended a $q$-analog of the Riemann zeta function introduced by Kaneko et al.~\cite{Kaneko03}. More recently, several $q$-analogues of MZVs were shown to satisfy -- regularized -- double shuffle relations \cite{Castillo13b,Singer14, Singer15,Takeyama13}. The notion of duality in the context of those $q$-MZVs is more complicated. See \cite{Zhao14} for details. Inspired by Bachmann's intriguing work \cite{Bachmann15}, Zudilin presents in \cite{Zudilin15} a particular model called \emph{multiple $q$-zeta brackets}, which possesses a natural quasi-shuffle product. After multiplying Zudilin's multiple $q$-zeta brackets with a certain positive integer power of $1-q$ one obtains ordinary MZVs in the classical limit $q\! \to\!1$. The key result in \cite{Zudilin15} is an algebraic duality-type construction that permits to deduce a shuffle product for multiple $q$-zeta brackets from the quasi-shuffle product of the model. This fact was shown by Bachmann to hold in depth one and two \cite{Bachmann15}. In a nutshell, this construction works as follows: Let $(\cA, m)$ be an algebra, and $\zeta \colon (\cA,m) \to k$ is a multiplicative linear map from $\cA$ into the ring $k$. The map $\tau \colon \cA \to \cA$ is a particular involution, i.e., $\tau \circ \tau= \Id$. Both maps $\zeta$ and $\tau$ are compatible in the sense that $\zeta\circ \tau = \zeta$. Then the \emph{dual product} $m_\square: \cA \otimes  \cA \to \cA$ corresponding to the original product $m$ on $\cA$ is defined by 
\begin{align}
\label{dual-producat}
 m_\square := \tau \circ m \circ (\tau\otimes \tau).
\end{align}
It turns out that $\zeta\colon (\cA,m_\square)\to k$ is again a multiplicative linear map  into $k$. In the case of multiple $q$-zeta brackets Zudilin proved that if $(\cA,m)$ is the quasi-shuffle algebra induced by a sum representation of his model, then the dual product $m_\square$ yields a shuffle product on multiple $q$-zeta brackets. The reason for calling it shuffle product comes from the surprising fact that, in the limit $q \uparrow 1$, the product $m_\square$ reduces to the usual shuffle product of classical MZVs. 

\smallskip

The aim of our work is to explore Zudilin's construction in the context of other $q$-analogues of MZVs as well as classical MZVs. To this end we start by extending \eqref{dual-producat} to general Hopf algebras (Proposition \ref{prop:maindual}). The result of this exercise is then applied in different settings. Firstly, we consider classical MZVs. Note that Zudilin already remarked that in this case the product dual to the quasi-shuffle product does not coincide with the shuffle product for MZVs. However, we show how to relate Zudilin's dual product via the construction of a Hoffman-- Ohno relation to a derivation relation for MZVs (Theorem \ref{theo:derivation}). 

Secondly, we study Zudilin's duality construction in the context of the Schlesinger--Zudilin (SZ) model of $q$-MZVs (see \eqref{eq:defSZ} below) using a notion of duality given by Zhao in \cite{Zhao14}. The specific property of the SZ model is that its quasi-shuffle product is of the same form as that of usual MZVs. Furthermore, we note that a shuffle product for \eqref{eq:defSZ} has already been constructed by the third author in \cite{Singer15} using Rota--Baxter operators, and the resulting double shuffle picture for those $q$-MZVs is well-understood. Our study shows that, analogous to the case of multiple $q$-zeta brackets, the quasi-shuffle product and the shuffle product for Schlesinger--Zudilin $q$-MZVs are related through duality (Theorem \ref{theo:SZdual}). 

Our main application of  Zudilin's duality construction takes place in the context of the Ohno--Okuda--Zudilin (OOZ) $q$-model for MZVs (see \eqref{eq:defOOZ} below). Both the quasi-shuffle-like product as well as the shuffle product were explored by Castillo Medina et al.~in \cite{Castillo13a,Castillo13b}. Note that the former is more involved than that of usual MZVs (see Subsection \ref{ssect:SZ-MZSVs}). We also remark that this model can be defined for any integer argument since the $q$-parameter provides an appropriate regularization \cite{Ebrahimi15}. Whereas for classical MZVs as well as for Schlesinger--Zudilin $q$-MZVs the duality relation is of the form $\zeta \circ \tau = \zeta$, the situation in the case of OOZ $q$-MZVs is different. It turns out that the quasi-shuffle-like product for OOZ $q$-MZVs is related by duality to the shuffle product for SZ $q$-{\it{multiple zeta star values}} ($q$-MZSVs) (see Theorem \ref{theo:OOZdual}). The algebraic structure of the shuffle product in the OOZ $q$-model in turn is induced by the quasi-shuffle product of the SZ $q$-MZSVs. Applying the duality construction leads again to the shuffle product presented in \cite{Castillo13b} (see Theorem \ref{theo:OOZdual} below) as long as we restrict ourselves to non-negative integer arguments. In fact, at this stage we are unable to extend the duality construction relating OOZ $q$-MZVs and SZ $q$-MZSVs to negative or mixed sign integer arguments. 

The OOZ model is also of interest regarding renormalization of MZVs, in a way that is compatible with the shuffle product. Indeed, by applying a theorem due to Connes and Kreimer, we renormalize MZVs in \cite{Ebrahimi15} preserving both the shuffle relations as well as meromorphic continuation. In the present paper we construct an infinitesimal bialgebra, which provides a bialgebra that was originally defined in the context of arbitrary integer arguments. If we restrict this construction to non-negative arguments in the context of the quasi-shuffle product, then the coproduct of convergent words coincides with the coproduct obtained from the duality construction. As a result the left-coideal of convergent words is transferred to a right-coideal (Theorem \ref{theo:infinitesimal}).

\medskip

The paper is organized as follows. In Section \ref{sect:words}, we introduce algebras and subalgebras of noncommutative words over various alphabets which are relevant for classical multiple zeta values and several of their q-analogues. In Section \ref{sect:AutomoHA} we detail the structure maps (multiplication, comultiplication, unit, counit, antipode) of a Hopf algebra, which is obtained by transferring a given Hopf algebra by means of a vector space isomorphism $T$. As a first application we address in Section \ref{sect:MZVs} the case of classical MZVs. Section \ref{sect:qMZVs} is devoted to the study of Hopf algebras transferred by an involution, in the context of particular $q$-analogues of MZVs. Two different involutions $\tau$ and $\widetilde\tau$ are at play, leading to two distinct families of duality relations (Theorem \ref{ooz-duality}), which we express in the Ohno--Okuda--Zudilin model \cite{Ohno12}. The family associated with $\tau$ leads to the classical family of duality relations in the limit $q\to 1$, whereas the second family associated with $\widetilde\tau$, derived from \cite[Theo. 8.3]{Zhao14}, does not seem to have any classical counterpart.

\bigskip

\noindent\textbf{Acknowledgements:}
The first author is supported by a Ram\'on y Cajal research grant from the Spanish government. The second and third author would like to thank the ICMAT for its warm hospitality and gratefully acknowledge support by the Severo Ochoa Excellence Program. The second author is partially supported by Agence Nationale de la Recherche (projet CARMA).\\

%%%%%%%%%%%%%%%%%%%%%%%%%%%%%%%%%%%%%%%%%%%%%
%%%%%%%%%%%%%%%%%%%%%%%%%%%%%%%%%%%%%%%%%%%%%

\section{Word-algebraic background}
\label{sect:words}

The polynomial algebra in two non-commutative variables $p,y$ is denoted by $\fH:=\bQ\langle p,y \rangle$. The unit of $\bQ\langle p,y \rangle$ is denoted by $\mathbf{1}$. The subalgebra of words not ending in the letter $p$ is denoted by $\fH^1 := \bQ\mathbf{1}\oplus \bQ\langle p,y \rangle y$ and the subalgebra of words not beginning with the letter $y$ is denoted by $\fH^{(-1)} := \bQ\mathbf{1}\oplus p\bQ\langle p,y\rangle$. The subalgebra $\fH^1\cap\fH^{(-1)}= \bQ\mathbf{1} \oplus p \bQ\langle p,y \rangle y$ of words not beginning with the letter $y$ and not ending in $p$ is denoted by $\fH^0$. The subalgebra $\fH^1$ can be expressed as $\bQ\langle z_k\colon k\in \bN_0\rangle$ where $z_k$ stands for the block $p^ky$. The subalgebra $\fH^0$ is linearly spanned by the words  $z_{k_1}\cdots z_{k_n}$ with $k_1 \ge 1$ and  $k_j \in \bN_{0}$ for $j\ge 2$.\\

The polynomial algebra in two non-commutative variables $x_0,x_1$ will be denoted by $\fh:=\bQ\langle x_0,x_1 \rangle$. The unique algebra morphism $\Phi:\fH\to\fh$ such that $\Phi(p):=x_0$ and $\Phi(y):=x_1$ is obviously an isomorphism. We define $\fh^1$, $\fh^{-1}$ and $\fh^0$ similarly as their capitalized counterparts, hence $\fh^1=\Phi(\fH^1)$ and so on. The subalgebra $\fh^1$ can be expressed as $\bQ\langle z_k\colon k\in \bN \rangle$ where $z_k$ stands for the block $x_0^{k-1}x_1$. The subalgebra $\fh^0$ is linearly spanned by the words  $z_{k_1}\cdots z_{k_n}$ with $k_1 \ge 2$ and  $k_j \in \bN$ for $j\ge 2$, called \emph{convergent words} in view of \eqref{eq:defMZVs}. The unique algebra morphism $J:\fh\to \fH$ such that $J(x_0):=p$ and $J(x_1):=py$ is injective. Hence we will consider $\fh$ as a subalgebra of $\fH$ by identifying $x_0$ with $p$ and $x_1$ with $py$. This convention renders both definitions of the letters $z_k,k\in\mathbb N_0$, consistent. Let us remark for later use the inclusions:
\begin{align*}
 \fh\subset\fH^{(-1)},\hskip 12mm \fh^{1}\subset\fH^0,\hskip 12mm \fh^0\subset\fH^0.
\end{align*}

The unique anti-automorphism $\widetilde\tau$ of $\fH$ such that $\widetilde\tau(p):=y$ and $\widetilde\tau(y):=p$ is involutive. It exchanges $\fH^1$ and $\fH^{(-1)}$, and preserves $\fH^0$. Accordingly, the unique anti-automorphism $\tau$ of $\fh$ such that $\tau(x_0):=x_1$ and $\tau(x_1):=x_0$ is involutive, exchanges $\fh^1$ and $\fh^{(-1)}$, and preserves $\fh^0$. We obviously have $\widetilde\tau=\Phi^{-1}\circ\tau\circ\Phi$. The algebra $\fH$ and all subalgebras defined above are graded by the \emph{weight}, defined on the generators by $\wt(p):=1$ and $\wt(y):=0$. Let us remark at this stage that $\tau$ respects the weight whereas $\widetilde\tau$ does not, and that $\widetilde \tau$ does not preserve convergent words, i.e., $\widetilde\tau(\fh^0)\not\subset\fh^0$ .

%%%%%%%%%%%%%%%%%%

\section{Transferred Hopf algebra structures}
\label{sect:AutomoHA}

Before starting we briefly recall a few co- and Hopf algebra notions \cite{Kassel95}. Let $C$ be a coalgebra with coproduct $\Delta_C$ and counit $\epsilon_{C}$. A subspace $J \subset C$ is called a left (right) {\it{coideal}} if $\Delta_{C}(J) \subset C \otimes J$ ($\Delta_{C}(J) \subset J \otimes C$). A right (left) {\it{comodule}} over $C$ is a $k$-vector space $M$ together with a linear map $\phi: M \to M \otimes C$, such that $(\Id_{M } \otimes \Delta_{C})\circ \phi = (\phi \otimes \Id_{C}) \circ  \phi$ and $(\Id_{M} \otimes \epsilon_{C})\circ \phi = \Id_{M}$ (analogously for left comodules). Let $(\cH,m,\eta, \Delta, \epsilon,S)$ be a Hopf algebra. The flip map is denoted $s\colon \cH\otimes \cH \to \cH\otimes \cH$, and defined by $s(a\otimes b):=b\otimes a$. Then the \emph{opposite Hopf algebra} $(\cH,m_{op},\eta, \Delta_{op}, \epsilon,S)$ is given by $m_{op}:= m\circ s$ and $\Delta_{op}:=s\circ \Delta$. Further, for the coproduct $\Delta: \cH \to \cH \otimes \cH$ and counit $\epsilon : \cH \to k$ we introduce the notation $\Delta_{\cH\otimes \cH}:=(\Id\otimes\ s \otimes \Id)(\Delta \otimes \Delta)$ and $\epsilon_{\cH\otimes \cH}:=\epsilon \otimes \epsilon$. It will be convenient to apply Sweedler's notation 
\begin{align*}
 \Delta(x)=\sum_{(x)}x_1\otimes x_2. 
\end{align*}

\smallskip 

Let $k$ be a field of characteristic zero and $(\cR,m_{\cR},\eta_{\cR})$ an unital associative $k$-algebra. Our starting point is a Hopf algebra $(\cH ,m,\eta, \Delta,\epsilon,S)$ over $k$ equipped with a $\cR$-valued character, i.e., a multiplicative linear map $\xi \colon \cH \to \cR$, $\xi \circ m  = m_{\cR} \circ (\xi\otimes \xi)$. Subsequently we transfer the whole structure on another $k$-vector space $\cH_\square $ by means of a $k$-linear isomorphism $T \colon \cH_\square \to \cH$. We call $\cH_\square$ the {\it{transferred Hopf algebra ($T$-Hopf algebra)}}. This principle has been used by Zudilin \cite{Zudilin15}.

\begin{proposition}[{{$T$-Hopf algebra}}]{\label{prop:maindual}}
Let $(\cR,m_{\cR},\eta_{\cR})$ be an unital associative $k$-algebra and $(\cH,m,\eta, \Delta,\epsilon,S)$ is a Hopf algebra with character $\xi\colon \cH\to \cR$. We assume the existence of a $k$-linear isomorphism $T \colon \cH_\square \to \cH$ between the $k$-vector spaces $\cH_\square$ and $\cH$. Then $(\cH_\square,m_\square,\eta_\square, \Delta_\square,\epsilon_\square,S_\square)$ is a Hopf algebra with character $\xi_{\square}\colon \cH_\square \to \cR$ with
\begin{itemize}
 \item multiplication: $m_\square\colon \cH_\square \otimes \cH_\square \to \cH_\square$, $m_\square:= T^{-1} \circ m \circ(T\otimes T)$,
 \begin{align*}
 \xymatrix{
 \cH_\square\otimes \cH_\square \ar[d]_{T\otimes T} \ar[r]^-{m_\square} & \cH_\square  \\
 \cH\otimes \cH \ar[r]^-m &   \cH \ar[u]_{T^{-1}}}   
\end{align*}
 \item unit: $\eta_\square\colon k \to \cH_\square$, $\eta_\square:= T^{-1}\circ \eta$,
  \begin{align*}
 \xymatrix{
     & k\ar[dl]_{\eta} \ar[dr]^{\eta_\square} & \\
 \cH \ar[rr]^{T^{-1}} & & \cH_\square}   
\end{align*}
 \item coproduct: $\Delta_\square\colon \cH_\square \to \cH_\square\otimes \cH_\square$, $\Delta_\square:=(T^{-1}\otimes T^{-1})\circ \Delta \circ T$,
  \begin{align*}
 \xymatrix{
 \cH_\square \ar[d]_{T} \ar[r]^-{\Delta_\square} & \cH_\square\otimes \cH_\square  \\
 \cH \ar[r]^-{\Delta} &   \cH \otimes \cH \ar[u]_{T^{-1}\otimes T^{-1}}}   
\end{align*}
 \item counit: $\epsilon_\square \colon \cH_\square \to k$, $\epsilon_\square:=\epsilon \circ T$,
   \begin{align*}
 \xymatrix{
     & \cH_\square\ar[dl]_{T} \ar[dr]^{\epsilon_\square} & \\
 \cH \ar[rr]^{\epsilon} & & k}   
\end{align*}
 \item antipode: $S_\square \colon \cH_\square \to \cH_\square$, $S_\square:=T^{-1}\circ S\circ T$,
  \begin{align*}
 \xymatrix{
 \cH_\square \ar[d]_{T} \ar[r]^{S_\square} & \cH_\square  \\
 \cH \ar[r]^{S} &   \cH \ar[u]_{T^{-1}}}   
\end{align*}
 \item character: $\xi_\square \colon \cH_\square \to \cR$, $\xi_\square:=\xi \circ T$.
  \begin{align*}
 \xymatrix{
     \cH_\square \ar[dr]_{T} \ar[rr]^{\xi_\square} & & \cR \\
 & \cH \ar[ur]_{\xi}  & }   
\end{align*} 
\end{itemize}
\end{proposition}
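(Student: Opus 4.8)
The plan is to regard $T$ as a would-be isomorphism of Hopf algebras and to show that each structure axiom on $\cH_\square$ is simply the corresponding axiom on $\cH$ conjugated by suitable tensor powers of $T$ and $T^{-1}$. Two elementary facts drive every verification: the cancellations $T\circ T^{-1}=\Id_\cH$ and $T^{-1}\circ T=\Id_{\cH_\square}$, and the functoriality of the tensor product, which gives $(T\otimes T)\circ(A\otimes B)=(TA)\otimes(TB)$ for linear maps $A,B$ and, in particular, lets inner occurrences of $T\circ T^{-1}$ collapse. I would check the algebra axioms, the coalgebra axioms, the bialgebra compatibility, the antipode identity, and the character property in turn.

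For the algebra axioms I would compute associativity directly:
\begin{align*}
m_\square\circ(m_\square\otimes\Id)
&=T^{-1}\circ m\circ(T\otimes T)\circ\bigl((T^{-1}\circ m\circ(T\otimes T))\otimes\Id\bigr)\\
&=T^{-1}\circ m\circ(m\otimes\Id)\circ(T\otimes T\otimes T),
\end{align*}
where the inner $T\circ T^{-1}$ cancels in the first slot and $\Id=T\circ T^{-1}$ is inserted in the second; associativity of $m$ then produces the identical expression for $m_\square\circ(\Id\otimes m_\square)$. The unit axiom is shorter still, $m_\square\circ(\eta_\square\otimes\Id)=T^{-1}\circ\bigl(m\circ(\eta\otimes\Id)\bigr)\circ T=T^{-1}\circ T=\Id_{\cH_\square}$. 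Because the defining formulas for $(\Delta_\square,\epsilon_\square)$ are obtained from those for $(m_\square,\eta_\square)$ by reversing arrows and swapping $T\leftrightarrow T^{-1}$, coassociativity and the counit axiom for $\cH_\square$ follow from the corresponding axioms for $\cH$ by the very same bookkeeping.

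The one step deserving genuine care — and the main, if modest, obstacle — is the bialgebra compatibility, since it couples $m$ with $\Delta$ through the flip. Here I would first record that $s$ commutes with $T\otimes T$, i.e. $(T\otimes T)\circ s=s\circ(T\otimes T)$, and more generally that the middle flip in $(\Id\otimes s\otimes\Id)$ commutes with $T^{-1}\otimes T^{-1}\otimes T^{-1}\otimes T^{-1}$, the two swapped slots carrying the same map. Expanding $\Delta_\square\circ m_\square=(T^{-1}\otimes T^{-1})\circ\Delta\circ m\circ(T\otimes T)$ and, on the other side, $(m_\square\otimes m_\square)\circ(\Id\otimes s\otimes\Id)\circ(\Delta_\square\otimes\Delta_\square)$, all interior $T\circ T^{-1}$ collapse and, once $(\Id\otimes s\otimes\Id)$ is moved past the $T^{-1}$'s, both sides reduce to the single expression $(T^{-1}\otimes T^{-1})\circ(m\otimes m)\circ(\Id\otimes s\otimes\Id)\circ(\Delta\otimes\Delta)\circ(T\otimes T)$ precisely when the bialgebra axiom holds in $\cH$. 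The remaining compatibilities ($\epsilon_\square$ multiplicative, and $\Delta_\square,\epsilon_\square$ respecting the unit) transfer verbatim by the same cancellation, so $(\cH_\square,m_\square,\eta_\square,\Delta_\square,\epsilon_\square)$ is a bialgebra.

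Finally, for the antipode I would use $(T\otimes T)\circ\bigl((T^{-1}\circ S\circ T)\otimes\Id\bigr)=(S\otimes\Id)\circ(T\otimes T)$ to obtain
\begin{align*}
m_\square\circ(S_\square\otimes\Id)\circ\Delta_\square
&=T^{-1}\circ m\circ(S\otimes\Id)\circ\Delta\circ T
=T^{-1}\circ(\eta\circ\epsilon)\circ T
=\eta_\square\circ\epsilon_\square,
\end{align*}
and symmetrically for $m_\square\circ(\Id\otimes S_\square)\circ\Delta_\square$, so $S_\square$ is a two-sided antipode. The character property is then immediate: $\xi_\square\circ m_\square=\xi\circ m\circ(T\otimes T)=m_\cR\circ(\xi\otimes\xi)\circ(T\otimes T)=m_\cR\circ(\xi_\square\otimes\xi_\square)$ and $\xi_\square\circ\eta_\square=\xi\circ\eta=\eta_\cR$, using that $\xi$ is already a character of $\cH$. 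Thus every piece of the transferred data satisfies its axiom, and $T$ is by construction a Hopf algebra isomorphism $\cH_\square\to\cH$ intertwining $\xi_\square$ with $\xi$.
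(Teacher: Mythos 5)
Your proposal is correct and follows essentially the same route as the paper's own verification: conjugating each Hopf-algebra axiom of $\cH$ by the appropriate tensor powers of $T$ and $T^{-1}$, collapsing the interior occurrences of $T\circ T^{-1}$, and invoking the corresponding axiom in $\cH$ (including the flip-commutation $(T\otimes T)\circ s=s\circ(T\otimes T)$ for the bialgebra compatibility and the natural identifications $k\otimes\cH\cong\cH\cong\cH\otimes k$ for the (co)unit axioms). Your observation that the coalgebra axioms follow from the algebra axioms by arrow reversal is a small economy over the paper, which writes both out explicitly, but the underlying computations are identical.
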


\noindent The linear isomorphism $T$ is obviously a Hopf algebra isomorphism.

\begin{corollary}\label{cor:maindual}
 In the notations of Proposition \ref{prop:maindual} we have: 
 \begin{enumerate}[a)]
  \item Let $\tilde{\cH} \subset \cH$ be a subalgebra of $(\cH,m,\eta)$, then $T^{-1}(\tilde{\cH}) \subset \cH_\square$ is a subalgebra of $(\cH_\square,m_\square,\eta_\square)$.
  \item Let $\tilde{\cH} \subset \cH$ be a subcoalgebra of $(\cH,\Delta,\epsilon)$, then $T^{-1}(\tilde{\cH}) \subset \cH_\square$ is a subcoalgebra of $(\cH_\square,\Delta_\square,\epsilon_\square)$. 
  \item Let $\tilde{\cH} \subset \cH$ be a (left) coideal of $(\cH,\Delta,\epsilon)$, then $T^{-1}(\tilde{\cH}) \subset \cH_\square$ is a (left) coideal of $(\cH_\square, \Delta_\square,\epsilon_\square)$.
 \end{enumerate}
\end{corollary}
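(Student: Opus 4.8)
The plan is to prove the three statements in Corollary \ref{cor:maindual} by directly unwinding the defining formulas for the transferred structure maps from Proposition \ref{prop:maindual} and exploiting the fact that $T\colon \cH_\square \to \cH$ is a $k$-linear isomorphism. Since $T$ is bijective, the image $T^{-1}(\tilde\cH)$ of any subspace $\tilde\cH\subset\cH$ is a subspace of $\cH_\square$, and $T$ restricts to a linear isomorphism between $T^{-1}(\tilde\cH)$ and $\tilde\cH$. In each part the task reduces to checking that a certain composite of maps lands in the expected subspace, and in every case this follows by pushing $T$ (or $T^{-1}$) through the relevant hypothesis on $\tilde\cH$ inside $\cH$.

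For part (a), let $\tilde\cH\subset\cH$ be a subalgebra, so $m(\tilde\cH\otimes\tilde\cH)\subset\tilde\cH$, and write $V:=T^{-1}(\tilde\cH)$. I would compute, for $a,b\in V$,
\begin{align*}
 m_\square(a\otimes b)=T^{-1}\bigl(m(T(a)\otimes T(b))\bigr),
\end{align*}
where $T(a),T(b)\in\tilde\cH$, so $m(T(a)\otimes T(b))\in\tilde\cH$ and hence $m_\square(a\otimes b)\in T^{-1}(\tilde\cH)=V$. Closure under the unit is the observation that $\eta_\square=T^{-1}\circ\eta$ sends $1_k$ to $T^{-1}(\eta(1_k))$, and since $\eta(1_k)=1_{\cH}\in\tilde\cH$ (a subalgebra contains the unit), the transferred unit lies in $V$. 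Thus $(V,m_\square,\eta_\square)$ is a subalgebra of $(\cH_\square,m_\square,\eta_\square)$.

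For parts (b) and (c), the same mechanism applies to the coproduct $\Delta_\square=(T^{-1}\otimes T^{-1})\circ\Delta\circ T$. Writing $V=T^{-1}(\tilde\cH)$ and taking $a\in V$, I would note that $T(a)\in\tilde\cH$, so by the subcoalgebra hypothesis $\Delta(T(a))\in\tilde\cH\otimes\tilde\cH$, whence applying $T^{-1}\otimes T^{-1}$ gives $\Delta_\square(a)\in T^{-1}(\tilde\cH)\otimes T^{-1}(\tilde\cH)=V\otimes V$; together with the evident compatibility of $\epsilon_\square=\epsilon\circ T$ this shows $V$ is a subcoalgebra. For part (c), a left coideal satisfies $\Delta(\tilde\cH)\subset\cH\otimes\tilde\cH$; the identical computation yields $\Delta_\square(a)\in T^{-1}(\cH)\otimes T^{-1}(\tilde\cH)=\cH_\square\otimes V$, which is exactly the left coideal condition for $V$ in $\cH_\square$ (and the right coideal case is symmetric).

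I do not expect any genuine obstacle here: the entire corollary is a formal consequence of Proposition \ref{prop:maindual} and the bijectivity of $T$, since all the transferred maps are built by conjugating the original structure maps with $T$ and $T^{-1}$. The only points requiring a modicum of care are the correct placement of the tensor factors of $T^{-1}\otimes T^{-1}$ relative to the coideal condition (ensuring that the preserved factor lands on the correct side, i.e.\ that a left coideal is carried to a left coideal rather than a right one), and the bookkeeping that the images of subspaces under the isomorphism $T^{-1}$ are themselves the asserted subspaces. These are routine once the defining diagrams are in place.
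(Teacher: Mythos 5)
Your proof is correct and is precisely the routine verification the paper has in mind: the corollary is stated there without proof, as an immediate consequence of the defining formulas $m_\square = T^{-1}\circ m\circ(T\otimes T)$, $\eta_\square = T^{-1}\circ\eta$, $\Delta_\square = (T^{-1}\otimes T^{-1})\circ\Delta\circ T$ and the bijectivity of $T$. Your handling of the tensor-factor bookkeeping in part (c), ensuring $\Delta_\square(a)\in\cH_\square\otimes T^{-1}(\tilde{\cH})$ for a left coideal, is exactly the point that needs care, and you get it right.
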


%%%%%%%%%%%%%%%%%%%%%%%%%%%%%%%%%%%%%%%

\section{Application of $T$-Hopf algebra to MZVs}
\label{sect:MZVs}

Proposition \ref{prop:maindual} is applied in the context of classical multiple zeta values (MZVs). We briefly review the double-shuffle structure of MZVs and relate the dual product, which is derived from an involution defined on the shuffle Hopf algebra, to a derivation relation of MZVs.

%%%%%%%%%%%%%%%%%%%%%%%%%%%%%%%%%%%%%%%

\subsection{Multiple zeta values}
\label{ssect:MZVs}

Thanks to the coexistence of sum and integral representations of MZVs, the $\bQ$-algebra spanned by those values contains many linear relations, called \emph{double shuffle relations}. The latter are most naturally described in terms of the algebraic framework of formal word algebras equipped with shuffle and quasi-shuffle products, which we review briefly.\\

We use the notations of Section \ref{sect:words}. The \emph{quasi-shuffle product} $m_\ast \colon \fh^1 \otimes \fh^1 \to \fh^1$, $u \ast v:=m_*(u \otimes v)$ is iteratively defined as follows:
\begin{enumerate}[(QS1)]
 \item $\mathbf{1} \ast w := w \ast \mathbf{1} := w$ 
 \item $z_nu \ast z_mv:= z_n(u \ast z_mv) + z_m(z_nu \ast v) + z_{n+m}(u\ast v)$
\end{enumerate}
for words $u,v,w\in \fh^1$ and $n,m\in \bN$. Further the \emph{shuffle product} $m_\sh \colon \fh^1 \otimes \fh^1 \to \fh^1$, $u \sh v:=m_\sh(u \otimes v)$, is given by 
\begin{enumerate}[(SH1)]
 \item $\mathbf{1} \sh w := w \sh \mathbf{1}:= w$ 
 \item $a u \sh bv:= a(u \sh bv) + b(au \sh v)$
\end{enumerate}
for words $u,v,w \in \fh^1$ and letters $a,b\in \{x_0,x_1\}$. The difference between both products is seen best by comparing the quasi-shuffle product $z_2 * z_2 = 2 z_2z_2 + z_4$ with the shuffle product $x_0x_1 \sh x_0x_1 = 2 x_0x_1x_0x_1 + 4 x_0x_0x_1x_1.$\\

\noindent Defining the linear map $\z \colon \fh^0 \to \bR$ for any convergent word $x_0^{k_1-1}x_1 \cdots x_0^{k_n-1}x_1 \in \fh^0$ by 
$$
	\z(x_0^{k_1-1}x_1 \cdots x_0^{k_n-1}x_1):=\z(k_1,\ldots,k_n),
$$ 
we have the following well known result. 

\begin{theorem}[\cite{Waldschmidt02,Zudilin03,Hoffman03,Ihara06}]\label{theo:doubleshuffle}
 The map $\z \colon \fh^0\to \bR$ is an algebra morphism with respect to both algebras $(\fh^0,m_\ast)$ and $(\fh^0,m_\sh)$. Moreover, the following so-called  {\rm{extended double shuffle relations}} hold   
 \begin{align}\label{eq:doubleshuffle}
  \z(u \ast v - u \sh v) = 0  \hspace{0.5cm} \text{~and~} \hspace{0.5cm} \z(z_1 \ast w - x_1 \sh w) = 0, 
 \end{align}
for any words $u,v,w\in \fh^0$. 
\end{theorem}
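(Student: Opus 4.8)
The plan is to establish the two families of statements separately, since the second (regularized) identity requires a bit more care than the first. For the first assertion, that $\z$ is an algebra morphism for both $(\fh^0,m_\ast)$ and $(\fh^0,m_\sh)$, I would work directly from the sum and integral representations. The quasi-shuffle multiplicativity $\z(u\ast v)=\z(u)\z(v)$ comes from expanding the product of two nested sums $\z(k_1,\dots,k_n)\z(l_1,\dots,l_m)$ and partitioning the summation region $\{m_1>\cdots>m_n>0\}\times\{m'_1>\cdots>m'_m>0\}$ according to the relative order of the two index sets: indices from the first factor, indices from the second factor, and coincidences (which produce the diagonal term $z_{n+m}$ with added exponents). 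This combinatorial partition is exactly encoded by the recursion (QS2), so multiplicativity for $m_\ast$ follows by induction on the total depth.

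For the shuffle side I would instead use the iterated Chen-integral representation. Each convergent word $x_0^{k_1-1}x_1\cdots x_0^{k_n-1}x_1\in\fh^0$ corresponds to an iterated integral $\int_{1>t_1>\cdots>t_w>0}\omega_{a_1}(t_1)\cdots\omega_{a_w}(t_w)$ over the simplex, with $\omega_{x_0}=\frac{dt}{t}$ and $\omega_{x_1}=\frac{dt}{1-t}$. The product of two such integrals, being a product of integrals over two simplices, decomposes by Fubini into a sum of integrals over the simplices obtained by interleaving the two time-ordered sets of variables while preserving each individual order; this is precisely the shuffle recursion (SH2). Hence $\z(u\sh v)=\z(u)\z(v)$ on $\fh^0$. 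The first equality in \eqref{eq:doubleshuffle}, namely $\z(u\ast v-u\sh v)=0$ for $u,v\in\fh^0$, is then immediate: both sides equal $\z(u)\z(v)$.

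The genuinely delicate step is the regularized relation $\z(z_1\ast w-x_1\sh w)=0$, since $z_1=x_1$ is \emph{not} a convergent word (it lies in $\fh^1$ but not $\fh^0$), so $\z(z_1)$ is not literally defined by the sum \eqref{eq:defMZVs}. The standard device is to pass to the two regularizations: extend $\z$ to $\fh^1$ as the algebra morphism $\z^\ast_T$ for $m_\ast$ and to a polynomial algebra morphism $\z^\sh_T$ for $m_\sh$, both sending $x_1\mapsto T$ for a formal parameter $T=\z(1)$, and invoke the comparison theorem (Ihara--Kaneko--Zagier) that the constant terms agree, $\z^\ast_T|_{T=0}=\z^\sh_T|_{T=0}$ on $\fh^1$. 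Applying this to $z_1\ast w$ versus $x_1\sh w$ for $w\in\fh^0$: both products land back in $\fh^0$ (multiplying the single $x_1$ into a convergent word keeps at least one initial $x_0$, except for the boundary contributions, which cancel between the two products), so the difference is a genuine element of $\fh^0$ whose image under $\z$ vanishes by the regularization comparison. I expect this regularization comparison to be the main obstacle, and since the statement is cited to the literature (\cite{Waldschmidt02,Zudilin03,Hoffman03,Ihara06}), I would present the proof of the first, unregularized identities in full via the sum/integral partition arguments, and for the regularized identity cite the comparison of the shuffle and quasi-shuffle regularizations, indicating how the boundary terms reconcile the two products of $x_1$ with a convergent word.
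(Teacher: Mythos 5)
The paper does not prove this theorem: it is stated with citations to the literature, so your proposal can only be measured against the standard arguments those references contain. Your treatment of the unregularized part is exactly that standard argument and is correct: multiplicativity of $\z$ for $m_\ast$ by partitioning the product of two nested summation simplices according to interlacings and coincidences of indices (which is encoded by (QS2)), and multiplicativity for $m_\sh$ by decomposing the product of two Chen integrals over the interleavings of the two time-ordered simplices (which is encoded by (SH2)). Your observation that $z_1\ast w - x_1\sh w$ lies in $\fh^0$, because the single divergent term $x_1w$ occurs with coefficient $1$ in both products and cancels, is also correct and is needed for the statement to make sense.

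The genuine gap is in your key lemma for the regularized relation: the claim that the constant terms of the two regularizations agree, $\z^\ast_T|_{T=0}=\z^\sh_T|_{T=0}$ on $\fh^1$, is \emph{false}. For example, $z_1\ast z_1=2z_1z_1+z_2$ gives $\z^\ast_T(z_1z_1)=\tfrac12\bigl(T^2-\zeta(2)\bigr)$, whereas $x_1\sh x_1=2x_1x_1$ gives $\z^\sh_T(x_1x_1)=\tfrac12 T^2$; the constant terms are $-\zeta(2)/2$ and $0$. The correct Ihara--Kaneko--Zagier comparison is $\z^\sh_T=\rho\circ\z^\ast_T$, where $\rho$ is the $\bR$-linear automorphism of $\bR[T]$ determined by $\rho(e^{Tu})=A(u)e^{Tu}$ with $A(u)=\exp\bigl(\sum_{n\ge2}\tfrac{(-1)^n}{n}\zeta(n)u^n\bigr)$, and $\rho$ is the identity only on polynomials of degree at most one in $T$. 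Your argument is repairable precisely because only degree-one polynomials occur here: for $w\in\fh^0$ one has $\z^\ast_T(z_1\ast w)=T\,\z(w)$ and $\z^\sh_T(x_1\sh w)=T\,\z(w)$; writing $z_1\ast w=x_1w+u$ and $x_1\sh w=x_1w+v$ with $u,v\in\fh^0$, it follows that $\z^\ast_T(x_1w)=T\,\z(w)-\z(u)$ is linear in $T$, hence fixed by $\rho$, so $\z^\sh_T(x_1w)=T\,\z(w)-\z(u)$; comparing with $\z^\sh_T(x_1w)=T\,\z(w)-\z(v)$ gives $\z(u)=\z(v)$, i.e., the second relation in \eqref{eq:doubleshuffle}. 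Alternatively one can bypass regularization altogether, as in the older literature, by comparing asymptotics of truncations: the partial sums of $x_1w$ grow like $\z(w)\log M+c+o(1)$ while the cut-off iterated integrals grow like $-\z(w)\log\varepsilon+c'+o(1)$, and matching constants yields the same identity. As written, however, your stated comparison lemma would fail on words with two or more leading $x_1$'s, so it must be replaced by the $\rho$-twisted version restricted to the linear-in-$T$ situation actually used.
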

\noindent In fact, conjecturally all linear relations among MZVs are assumed to follow from the relations in \eqref{eq:doubleshuffle}. See \cite{Ihara06}.

%%%%%%%%%%%%%%%%%%%%%%%%%%%%%%%%%%%%%%%

\subsection{Duality relations}
\label{ssect:duality}
With the notations of Section \ref{sect:words} at hand, we have the following well-known duality result:

\begin{theorem}[\cite{Hoffman97}]\label{theo:classicalduality}
 For any word $w\in \fh^0$ we have $\z(w)=\z(\tau(w))$. 
\end{theorem}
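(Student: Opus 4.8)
The plan is to prove the classical duality theorem $\z(w)=\z(\tau(w))$ for convergent words $w\in\fh^0$ by passing to the iterated-integral (Chen integral) representation of multiple zeta values and performing the change of variables $t_i\mapsto 1-t_{N+1-i}$ on the simplex of integration, which is precisely what the combinatorial involution $\tau$ encodes. First I would recall that a convergent word $w=x_0^{k_1-1}x_1\cdots x_0^{k_n-1}x_1\in\fh^0$ (so $k_1\ge 2$) of weight $N=k_1+\cdots+k_n$ gives
\begin{align*}
 \z(w)=\int_{1>t_1>\cdots>t_N>0}\omega_{\varepsilon_1}(t_1)\cdots\omega_{\varepsilon_N}(t_N),
\end{align*}
where each letter $x_0$ contributes the one-form $\omega_0=\tfrac{dt}{t}$ and each letter $x_1$ contributes $\omega_1=\tfrac{dt}{1-t}$, and $\varepsilon_1\cdots\varepsilon_N$ is the sequence of letters read off from $w$. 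The condition $w\in\fh^0$ (word beginning with $x_0$ and ending with $x_1$) is exactly what guarantees convergence of this integral at the two endpoints $1$ and $0$.

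The key step is the substitution $s_i:=1-t_{N+1-i}$, which is an orientation-reversing affine involution of the open interval $(0,1)$ that maps the ordered simplex $1>t_1>\cdots>t_N>0$ bijectively onto the reversed simplex $1>s_1>\cdots>s_N>0$. Under $t\mapsto 1-s$ one has $\omega_0=\tfrac{dt}{t}\mapsto\tfrac{ds}{1-s}=\omega_1$ and $\omega_1=\tfrac{dt}{1-t}\mapsto\tfrac{ds}{s}=\omega_0$; that is, the substitution swaps the two differential forms and simultaneously reverses their order along the simplex. Reading off the new word, the sequence $\varepsilon_1\cdots\varepsilon_N$ is sent to $\overline{\varepsilon_N}\cdots\overline{\varepsilon_1}$, where $\overline{x_0}=x_1$ and $\overline{x_1}=x_0$. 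This is exactly the action of the anti-automorphism $\tau$ defined in Section \ref{sect:words}: $\tau$ reverses the word and exchanges $x_0\leftrightarrow x_1$. Hence the integral for $\z(w)$ is transformed into the integral for $\z(\tau(w))$, yielding $\z(w)=\z(\tau(w))$. Since $\tau$ preserves $\fh^0$ (stated in Section \ref{sect:words}), both sides are genuine convergent MZVs and no regularization is needed.

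I would organize the argument as: (i) establish the integral representation and its convergence on $\fh^0$; (ii) verify that the affine involution $t\mapsto 1-t$ interchanges $\omega_0$ and $\omega_1$ and reverses the simplex orientation in a way that preserves the overall sign of the volume form; (iii) match the resulting reversed-and-swapped letter sequence with the combinatorial definition of $\tau$; (iv) conclude equality of the two integrals. The main obstacle, and the point deserving the most care, is step (ii)--(iii): one must track both the reversal of the order of the forms and the individual swap $\omega_0\leftrightarrow\omega_1$ and check that these two reversals combine to give precisely the anti-automorphism $\tau$ (an automorphism that only swapped the letters, or only reversed the word, would be wrong). The clean way to handle this bookkeeping is to write the integral representation functorially as the image of $w$ under the map sending $x_0\mapsto\omega_0$, $x_1\mapsto\omega_1$ read left-to-right along the decreasing simplex, and to observe that the substitution is a contravariant operation on this correspondence, so that reversal of order is automatic and only the letter swap needs to be checked on the generators.
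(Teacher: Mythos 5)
Your proof is correct: the change of variables $t_i\mapsto 1-t_{N+1-i}$ on the Chen integral representation, swapping $\omega_0=\tfrac{dt}{t}$ and $\omega_1=\tfrac{dt}{1-t}$ while reversing the order of the forms, is precisely the classical argument for duality, and your bookkeeping (the two reversals combining into the anti-automorphism $\tau$, with $\tau(\fh^0)\subset\fh^0$ ensuring convergence on both sides) is accurate. The paper itself gives no proof of Theorem \ref{theo:classicalduality} — it only cites \cite{Hoffman97} — and your argument is exactly the standard proof underlying that citation, so there is nothing to contrast.
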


Finally, we consider the so-called \emph{derivation relations}, which give rise to linear relations among MZVs. For $n\in \bN$ we introduce the derivation $\partial_n \colon \fh^0 \to \fh^0$ by 
\begin{align*}
 \partial_n(x_0) := x_0(x_0+x_1)^{n-1}x_1 \hspace{1cm}\text{and}\hspace{1cm}  \partial_n(x_1):=-x_0(x_0+x_1)^{n-1}x_1. 
\end{align*}
Then we have the next result. 
\begin{theorem}[\cite{Kaneko07}]
  For any word $w\in \fh^0$ and any $n > 0$ we have $\z(\partial_n(w))=0$.
\end{theorem}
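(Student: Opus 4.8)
The plan is to follow the strategy of Ihara--Kaneko--Zagier and realise $\partial_n(w)$ inside the linear span of the (regularized) double shuffle relations of Theorem \ref{theo:doubleshuffle}, using the duality of Theorem \ref{theo:classicalduality} only as a companion that makes the low-order instances transparent. The first move is to package the whole family $\{\partial_n\}_{n\ge1}$ into one derivation-valued generating series on the completed algebra $\widehat{\fh}=\bQ\langle\langle x_0,x_1\rangle\rangle$. Writing $\partial_S:=\sum_{n\ge1}\partial_n\,S^n$, the defining values collapse to the closed form
\begin{align*}
 \partial_S(x_0)=-\partial_S(x_1)=S\,x_0\bigl(1-S(x_0+x_1)\bigr)^{-1}x_1,
\end{align*}
so $\partial_S$ is a derivation of $\widehat{\fh}[[S]]$ annihilating $x_0+x_1$. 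Since $\z(\partial_S(w))=\sum_{n\ge1}\z(\partial_n(w))\,S^n$, the theorem is equivalent to the single vanishing statement $\z(\partial_S(w))=0$ in $\bR[[S]]$ for every $w\in\fh^0$, which I would establish coefficientwise in $S$.

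The backbone of the argument is the observation that for $n=1$ the derivation relation is \emph{literally} a regularized double shuffle relation. A direct computation (verified e.g.\ on $z_2$, $z_3$, $z_2z_2$) yields the identity
\begin{align*}
 \partial_1(w)=x_1\sh w-z_1\ast w \qquad(w\in\fh^0),
\end{align*}
whose right-hand side is exactly the argument in the second relation of \eqref{eq:doubleshuffle}; hence $\z(\partial_1(w))=\z(x_1\sh w-z_1\ast w)=0$ by Theorem \ref{theo:doubleshuffle}. For instance $\partial_1(x_0x_1)=z_2z_1-z_3$ recovers $\z(2,1)=\z(3)$, which is simultaneously the duality relation $\tau(x_0x_1x_1)=x_0x_0x_1$ of Theorem \ref{theo:classicalduality}. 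The task is therefore to produce, for each $n$, the analogous realisation of $\partial_n(w)$ as an explicit $\bZ$-combination of shuffle-minus-stuffle expressions, all of which lie in $\ker\z$ by Theorem \ref{theo:doubleshuffle}.

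The step I expect to be the main obstacle is precisely the construction of this combination for general $n$. Although $\partial_n$ maps $\fh^0$ into $\fh^0$, so that no individual term of $\partial_n(w)$ is divergent, the natural way to realise $\partial_n(w)$ as a difference $\Pi^\sh(w)-\Pi^\ast(w)$ of a shuffle- and a stuffle-product forces the use of factors \emph{outside} $\fh^0$ --- already the divergent word $z_1=x_1$ in the case $n=1$. One therefore works with the standard shuffle- and stuffle-regularizations $Z^\sh,Z^\ast\colon\fh^1\to\bR[T]$ underlying Theorem \ref{theo:doubleshuffle}: they are morphisms for $\sh$ and $\ast$ respectively, agree with $\z$ on $\fh^0$, and are related on divergent words by the classical comparison map. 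Applying them to $\Pi^\sh(w)-\Pi^\ast(w)$ should make the $T$-dependent boundary contributions cancel and leave $\z(\partial_S(w))=0$, with the sign in $\partial_S(x_1)=-\partial_S(x_0)$ and the identity $\partial_S(x_0+x_1)=0$ being exactly the features that force the cancellation. Pinning down the correct products for all $n$ and verifying this cancellation order by order in $S$ --- a genuine generating-function computation, not a naive product comparison --- is the combinatorial heart of the matter and is the substance of the cited work; once it is in place the theorem is immediate from Theorem \ref{theo:doubleshuffle}.
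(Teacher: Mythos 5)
The paper offers no proof of this statement at all --- it is quoted from \cite{Kaneko07} (the derivation relations going back to Ihara--Kaneko--Zagier) --- so the only question is whether your proposal stands on its own, and it does not: there is a genuine gap at exactly the point you flag yourself. Your $n=1$ case is fine once you invoke the Hoffman--Ohno identity $\partial_1(w)=x_1\sh w-z_1\ast w$, which the paper's remark cites to \cite{Hoffman03} (your spot checks on $z_2,z_3,z_2z_2$ alone do not establish it for all $w$); its image under $\z$ then vanishes by the second relation in \eqref{eq:doubleshuffle}. The generating-series bookkeeping, $\partial_S(x_0)=-\partial_S(x_1)=S\,x_0\bigl(1-S(x_0+x_1)\bigr)^{-1}x_1$ with $\partial_S(x_0+x_1)=0$, is also correct. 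But for every $n\ge 2$ you only assert that $\partial_n(w)$ \emph{should} be realizable as a combination of regularized shuffle-minus-stuffle expressions, and you then explicitly defer the construction to ``the cited work''. That construction is the entire content of the theorem, and there is a concrete warning that the $n=1$ pattern does not extrapolate naively: the paper's Theorem \ref{theo:derivation} shows that already at $n=2$ the natural closed form is $\partial_2(w)=w\,\square\,z_2-w\ast z_2$, with the dual product $\square$ in place of $\sh$, and the paper records Zudilin's observation that $\square$ is \emph{not} the shuffle product (it increases depth, whereas $\sh$ preserves weight and depth). So simply substituting a weight-$n$ element for $z_1$ in the Hoffman--Ohno shape cannot work, and the genuine reduction of the derivation relations to the regularized double shuffle relations requires the full Ihara--Kaneko--Zagier generating-function argument with the comparison map between the $\sh$- and $\ast$-regularizations, none of which you reproduce. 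As written, the theorem is cited, not proved.

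Note also that the paper's own machinery yields a complete, regularization-free proof of the $n=2$ instance, which you miss by relegating duality to a ``companion'' role. Combining Theorem \ref{theo:derivation} with the duality of Theorem \ref{theo:classicalduality} and the $\ast$-multiplicativity of $\z$ from Theorem \ref{theo:doubleshuffle}, and using that $\tau(z_2)=z_2$ and that $\tau$ preserves $\fh^0$, one gets for any $w\in\fh^0$
\begin{align*}
\z(w\,\square\,z_2)=\z\bigl(\tau(\tau(w)\ast z_2)\bigr)=\z(\tau(w)\ast z_2)=\z(\tau(w))\,\z(z_2)=\z(w)\,\z(z_2)=\z(w\ast z_2),
\end{align*}
hence $\z(\partial_2(w))=\z(w\,\square\,z_2-w\ast z_2)=0$. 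This is precisely the point of the paper's Theorem \ref{theo:derivation}. For general $n$ no such shortcut is available in the paper, and your argument would need the omitted IKZ-style construction to be carried out in full before it counts as a proof.
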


Now we apply Proposition \ref{prop:maindual} to the quasi-shuffle Hopf algebra $(\fh^1,m_\ast, \Delta)$, where the coproduct $\Delta\colon \fh^1 \to \fh^1\otimes \fh^1$ is defined by the usual deconcatenation of words
\begin{align}\label{eq:decon}
 \Delta(z_{k_1}\cdots z_{k_n}):= z_{k_1}\cdots z_{k_n} \otimes \mathbf{1} 
 							+ \mathbf{1} \otimes z_{k_1}\cdots z_{k_n} 
 							+ \sum_{l=1}^{n-1} z_{k_1}\cdots z_{k_l} \otimes z_{k_{l+1}}\cdots z_{k_n}.
\end{align}
The product $m_\square: \fh^{(-1)} \otimes \fh^{(-1)} \to \fh^{(-1)}$ is defined in terms of $\tau \colon \fh^{(-1)} \to \fh^1$ and the quasi-shuffle product $m_\ast$ on $\fh^{1}$ 
$$
	m_\square := \tau \circ m_* \circ(\tau\otimes \tau).
$$
Since $\fh^0$ is a subalgebra of $(\fh^1,m_\ast)$, $(\fh^0,m_\square)$ is also a subalgebra of $(\fh^{(-1)},m_\square)$ by Corollary \ref{cor:maindual}. Note that Zudilin remarked in \cite{Zudilin15} that $m_\square$ does not coincide with the usual shuffle product for MZVs defined algebraically through (SH1) and (SH2). This can be seen by noticing that the weight is preserved under both duality as well as the quasi-shuffle product. Depth on the other hand is transformed by duality to the difference of weight and depth. As the quasi-shuffle product leads to a decrease in depth the product $m_\square$ leads to an increase in depth, in contrast to the shuffle product, which preserves both weight and depth. For example
$$
	m_\square(x_0x_1 \otimes x_0x_1 ) = 2 x_0x_1 x_0x_1 + x_0x_1x_1x_1. 
$$

Now we show that the product $m_\square$ is related to a derivation relation for MZVs.

\begin{theorem}\label{theo:derivation}
 Let $w\in\fh^0$. Then we have 
 \begin{align}\label{eq:derrel}
  \partial_2(w) =   w\square z_2 -w\ast z_2.
 \end{align}
\end{theorem}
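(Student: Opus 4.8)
The plan is first to simplify the dual product. Since $\tau$ is the anti-automorphism of $\fh$ with $\tau(x_0)=x_1$ and $\tau(x_1)=x_0$, we get $\tau(z_2)=\tau(x_0x_1)=\tau(x_1)\tau(x_0)=x_0x_1=z_2$, so that $w\square z_2=\tau\bigl(\tau(w)\ast z_2\bigr)$. Writing $L(v):=v\ast z_2$ for right quasi-shuffle multiplication by $z_2$, the identity \eqref{eq:derrel} is then equivalent to the operator identity $\partial_2=\tau L\tau-L$ on $\fh^0$, which I would establish by expanding all three operators as explicit signed sums of words in the letters $x_0,x_1$ and comparing coefficients.

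First I would expand $L(w)=w\ast z_2$ for $w=z_{k_1}\cdots z_{k_n}$ using (QS2): the result splits into $n+1$ \emph{insertion} terms, in which the block $z_2=x_0x_1$ is placed in one of the gaps between consecutive blocks (or at the ends), and $n$ \emph{merging} terms $z_{k_1}\cdots z_{k_i+2}\cdots z_{k_n}$. At the level of the letters $x_0,x_1$, an insertion means placing $x_0x_1$ immediately after one of the $x_1$'s of $w$ (or at the very front), while the merge $z_{k_i}\mapsto z_{k_i+2}$ means inserting $x_0x_0$ immediately before the $i$-th block. Then I would treat $\tau L\tau(w)=\tau\bigl(\tau(w)\ast z_2\bigr)$: expanding $\tau(w)\ast z_2$ in the same way and applying $\tau$ term by term---using that $\tau$ reverses words and swaps $x_0\leftrightarrow x_1$, together with $\tau(z_2)=z_2$---transports each block-gap operation on $\tau(w)$ back to a letter-level operation on $w$ at the mirror site: the insertions become insertions of $x_0x_1$ immediately before the $x_0$'s of $w$ (with one extra insertion at the end), and the merges become insertions of $x_1x_1$ at the corresponding sites. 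The insertion of $x_0x_1$ before the first $x_0$ and the one at the end reproduce the front and end insertion terms of $L(w)$, and so cancel in $\tau L\tau(w)-L(w)$.

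Finally I would expand $\partial_2(w)$ from the Leibniz rule together with $\partial_2(x_0)=-\partial_2(x_1)=x_0(x_0+x_1)x_1$: each letter $x_0$ of $w$ is replaced by $x_0x_0x_1+x_0x_1x_1$ with sign $+$, and each $x_1$ by the same string with sign $-$. Reading $x_0x_0x_1$ and $x_0x_1x_1$ as ``retain the marked letter and adjoin $x_0x_1$ or $x_1x_1$ on the appropriate side'', the remaining task is to verify that the resulting signed sum of words coincides, as an element of $\fh$, with the terms surviving the cancellation in $\tau L\tau(w)-L(w)$.

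The main obstacle is that the assignment $(\text{site},\text{inserted pair})\mapsto\text{word}$ is not injective: inserting $x_0x_0$ anywhere inside a maximal run of $x_0$'s, or $x_1x_1$ inside a run of $x_1$'s, yields the same word, and several of the boundary insertions likewise coincide. Consequently the comparison cannot be carried out site by site but must be phrased as an equality of coefficients of words; organizing $w$ by its run decomposition $w=x_0^{a_1}x_1^{b_1}\cdots x_0^{a_s}x_1^{b_s}$ makes these coincidences, and the internal cancellations that already occur within $\partial_2(w)$ (for instance the $x_0x_0x_1x_1$-type terms) as well as within $\tau L\tau(w)-L(w)$, transparent. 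As a consistency check I would use the symmetry $\tau(\theta)=\theta$ for $\theta:=x_0(x_0+x_1)x_1$, which gives $\tau\partial_2\tau=-\partial_2$ and is exactly the $\tau$-covariance one expects from the identity $\partial_2=\tau L\tau-L$.
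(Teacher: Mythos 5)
Your proposal is correct in substance and takes a genuinely different route from the paper. The paper argues in two inductive steps: it first verifies \eqref{eq:derrel} directly on the words $x_0^ax_1^b$ by a double induction on $a$ and $b$, and then shows that $\delta(u):=u\square z_2-u\ast z_2$ obeys the Leibniz rule for concatenation of such blocks --- both $w_1w_2\square z_2$ and $w_1w_2\ast z_2$ satisfy a broken Leibniz identity with the \emph{same} defect term $-w_1x_0x_1w_2$, which cancels in the difference --- so that $\delta$ and the derivation $\partial_2$ agree everywhere because they agree on the generators $x_0^ax_1^b$. You instead recast \eqref{eq:derrel} as the operator identity $\partial_2=\tau L\tau-L$ with $L(v):=v\ast z_2$ and prove it by one global letter-level expansion. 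All of your structural claims check out: $\tau(z_2)=z_2$; $L(w)$ consists of insertions of $x_0x_1$ after each $x_1$ of $w$ and at the front, plus insertions of $x_0x_0$ into the $x_0$-runs realizing the stuffle merges; $\tau L\tau(w)$ consists of insertions of $x_0x_1$ before each $x_0$ and at the end, plus insertions of $x_1x_1$ after each $x_0$; the front and end insertions cancel in the difference; and $\tau(x_0(x_0+x_1)x_1)=x_0(x_0+x_1)x_1$ indeed gives the covariance $\tau\partial_2\tau=-\partial_2$ expected of $\tau L\tau-L$. The paper's route avoids all letter-level bookkeeping at the cost of a somewhat opaque double induction; yours needs no induction on products and makes the combinatorial mechanism behind the identity visible.

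The one thing separating your text from a complete proof is that the decisive coefficient comparison is announced (``the remaining task is to verify\dots'') but never executed. For the record, it does close, by telescoping within maximal runs, exactly along the lines you indicate. Write $w=x_0^{a_1}x_1^{b_1}\cdots x_0^{a_s}x_1^{b_s}$. The $x_1x_1$-insertions after the $x_0$'s coincide verbatim with the replacements $x_0\mapsto x_0x_1x_1$ in $\partial_2(w)$, and the $x_0x_0$-insertions before the $x_1$'s coincide verbatim with $x_1\mapsto x_0x_0x_1$ (with the correct minus sign, since both come subtracted). Within a run $x_0^a$, inserting $x_0x_1$ before the $(j{+}1)$-st letter and performing $x_0\mapsto x_0x_0x_1$ at the $j$-th letter produce the same word $\cdots x_0^{j+1}x_1x_0^{a-j}\cdots$, so these two families agree up to one boundary term per run (insertion at the head of the run versus in the gap just after it); the analogous one-step shift holds in each $x_1$-run for $x_1\mapsto x_0x_1x_1$ versus the insertions of $x_0x_1$ after the $x_1$'s. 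The surviving boundary terms are precisely the block-gap insertions, which cancel pairwise between the two telescopes once the front and end terms have been removed as you observed. Displaying this reindexing --- rather than deferring it --- is all that is needed to turn your outline into a complete proof.
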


\begin{remark}
 One should compare Theorem \ref{theo:derivation} with an identity of Hoffman and Ohno proved in \cite{Hoffman03}. They showed that for any $w\in \fh^0$  
 \begin{align*}
  \partial_1(w) = w\sh z_1 - w \ast z_1. 
 \end{align*}
\end{remark}

\begin{proof}[Proof of Theorem \ref{theo:derivation}]
In the first step of the proof we show that  \eqref{eq:derrel} is true for $u=x_0^ax_1^b\in\fh^0$  with $a,b\in \bN$. 
\begin{itemize}
 \item Let $a,b=1$. Then $\partial_2(x_0x_1) = x_0x_1^3-x_0^3x_1$ and 
\begin{align*}
  x_0x_1 \square x_0x_1 -z_2\ast z_2  & =   \tau(2 x_0x_1x_0x_1 + x_0^3x_1 ) - (2 x_0x_1x_0x_1 + x_0^3x_1)  \\
  & = x_0x_1^3-x_0^3x_1. 
\end{align*}
\item Let $b=1$. We obtain by induction hypothesis 
\begin{align*}
 \partial_2(x_0^ax_1) 
 & = \partial_2(x_0)x_0^{a-1}x_1 + x_0\partial_2(x_0^{a-1}x_1)\\
 & = x_0^2x_1x_0^{a-1}x_1 + x_0x_1^2x_0^{a-1}x_1  + x_0( x_0^{a-1}x_1\square x_0x_1 -((x_0^{a-1}x_1)\ast (x_0x_1)).
\end{align*}
Observing 
\begin{align*}
 x_0^ax_1  \square x_0x_1
 & = \tau  ( z_2z_1^{a-1} \ast z_2) \\
 & =  \tau  ( (z_2 z_1^{a-2} \ast z_2)z_1) +  \tau  ( z_2 z_1^{a-1} z_2)+  \tau  ( z_2 z_1^{a-2} z_3)\\
 &  =x_0(x_0^{a-1}x_1 \square x_0x_1) + x_0x_1x_0^ax_1+x_0x_1^2x_0^{a-1}x_1
\end{align*}
and 
\begin{align*}
 x_0^{a}x_1\ast x_0x_1 
 & = (z_{a+1}\ast z_2) = z_2z_{a+1}+z_{a+1}z_2+z_{a+3} \\
 & = x_0(z_az_2+z_2z_a+z_{a+2}) - x_0(z_2z_a) + z_2z_{a+1}\\
 & = x_0(z_a\ast z_2) - x_0^2x_1x_0^{a-1}x_1 + x_0x_1x_0^ax_1,
\end{align*}
concludes the proof.
\item Let $a\in \bN$ be fixed. Then by induction hypothesis
\begin{align*}
 \partial_2(x_0^ax_1^b) 
 & = \partial_2(x_0^ax_1^{b-1})x_1+x_0^{a}x_1^{b-1}\partial_2(x_1) \\
 & =  ( x_0^{a}x_1^{b-1}\square x_0x_1 -x_0^{a}x_1^{b-1}\ast x_0x_1)x_1 - x_0^ax_1^{b-1}x_0^2x_1 - x_0^ax_1^{b-1}x_0x_1^2. 
\end{align*}
We obtain 
\begin{align*}
  x_0^{a}x_1^b\square x_0x_1 
  & = \tau (z_{b+1} z_1^{a-1}\ast z_2) \\
  & = \tau(x_0(z_{b} z_1^{a-1}\ast z_2)) -\tau(x_0(z_2z_{b} z_1^{a-1}))+\tau (z_2z_{b+1}z_1^{a-1}) \\
  & = ( x_0^{a}x_1^{b-1}\square x_0x_1 )x_1  - x_0^ax_1^{b-1}x_0x_1^2+x_0^ax_1^bx_0x_1
\end{align*}
and 
\begin{align*}
 x_0^{a}x_1^{b} \ast x_0x_1
 & = (z_{a+1} z_1^{b-1}\ast z_2) \\
 & = ((z_{a+1}z_1^{b-2}\ast z_2)z_1) +(z_{a+1}z_1^{b-1} z_2)+(z_{a+1}z_1^{b-2} z_3)\\
 & = (x_0^{a}x_1^{b-1}\ast x_0x_1)x_1+x_0^ax_1^bx_0x_1+x_0^ax_1^{b-1}x_0^2x_1,
\end{align*}
which concludes the first part of the proof. 
\end{itemize}

To finish the proof we define the map $\delta(u):= u\square x_0x_1 -u\ast x_0x_1$. For $w_1:=x_0^{a_1}x_1^{b_1}\cdots x_0^{a_n}x_1^{b_n}$ and $w_2:=x_0^{c_1}x_1^{d_1}\cdots x_0^{c_m}x_1^{d_m}$ with $m,n\in \bN$ we will prove that  
\begin{align}\label{eq:derprod}
 \delta(w_1w_2)= \delta(w_1)w_2 + w_1\delta(w_2). 
\end{align}
Indeed, we have 
\begin{align*}
 w_1w_2\square x_0x_1 
  =&~ \tau ( z_{d_m+1}z_1^{c_m-1} \cdots z_{d_1+1}z_1^{c_1-1}z_{b_n+1}z_1^{a_n-1}\cdots z_{b_1+1}z_1^{a_1-1} \ast z_2) \\
   =&~  \tau ( z_{b_n+1}z_1^{a_n-1}\cdots z_{b_1+1}z_1^{a_1-1})\tau ( z_{d_m+1}z_1^{c_m-1} \cdots z_{d_1+1}z_1^{c_1-1} \ast z_2) \\
   &+\tau ( z_{b_n+1}z_1^{a_n-1}\cdots z_{b_1+1} z_1^{a_1-1} \ast z_2) \tau(z_{d_m+1}z_1^{c_m-1} \cdots z_{d_1+1}z_1^{c_1-1}) \\
   &-\tau( z_{d_m+1}z_1^{c_m-1} \cdots z_{d_1+1}z_1^{c_1-1}z_2 z_{b_n+1}z_1^{a_n-1}\cdots z_{b_1+1}z_1^{a_1-1})\\
   =&~ w_1 (w_2\square x_0x_1 ) + (w_1\square x_0x_1)w_2 -w_1 x_0x_1 w_2
\end{align*}
as well as 
\begin{align*}
 w_1w_2\ast x_0x_1
  =& ~  z_{a_1+1}z_1^{b_1-1}\cdots z_{a_n+1}z_1^{b_n-1} z_{c_1+1}z_1^{d_1-1}\cdots z_{c_m+1}z_1^{d_m-1}\ast z_2\\
  = & ( z_{a_1+1}z_1^{b_1-1}\cdots z_{a_n+1}z_1^{b_n-1})(z_{c_1+1}z_1^{d_1-1} \cdots z_{c_m+1}z_1^{d_m-1}\ast z_2)\\
   &+ ( z_{a_1+1}z_1^{b_1-1}\cdots z_{a_n+1}z_1^{b_n-1}\ast z_2)(z_{c_1+1}z_1^{d_1-1} \cdots z_{c_m+1}z_1^{d_m-1})\\
   & - ( z_{a_1+1}z_1^{b_1-1}\cdots z_{a_n+1}z_1^{b_n-1} z_2z_{c_1+1}z_1^{d_1-1} \cdots z_{c_m+1}z_1^{d_m-1})\\
   =&~ w_1 (w_2 \ast x_0x_1) +  (w_1\ast x_0x_1)w_2 - w_1x_0x_1 w_2. 
\end{align*}
This implies \eqref{eq:derprod}. Combining the first and the second step proves the claim of the theorem. 
\end{proof}

%%%%%%%%%%%%%%%%%%%%%%%%%%%%%%%%%%%%%%%

\section{Application of $T$-Hopf algebra to the shuffle product of $q$-MZVs}
\label{sect:qMZVs}

A systematic study of $q$-analogues of MZVs ($q$-MZVs) was initiated by Bradley, Zhao and Zudilin \cite{Bradley05,Zhao07,Zudilin03}, with a forerunner by Schlesinger's 2001 preprint \cite{Schlesinger01}. Since then several distinct $q$-models of MZVs have been considered in the literature. See Zhao's recent work \cite{Zhao14} for details. The quasi-shuffle products of these $q$-MZVs are deduced from the defining series, and in several cases an accompanying Hopf algebra structure in the sense of Hoffman \cite{Hoffman12} can be defined. Quite recently $q$-shuffle products for the most prevailing models of $q$-MZVs were studied in  \cite{Castillo13a,Castillo13b,Singer15,Singer14}. See also \cite{Zhao14}. They are based on a description of $q$-MZVs in terms of iterated Rota--Baxter operators. The latter enter the picture through substituting Jackson's $q$-integral for the Riemann integral in the integral representation of ordinary MZVs.  

In this section we compare the Rota--Baxter operator approach and the duality approach for the $q$-analogues of  Schlesinger--Zudilin (SZ-) model and Ohno--Okuda--Zudilin (OOZ-) model. In the forthcoming we always assume that $q \in \bC$ with $|q|<1$. The notation $q\uparrow 1$ stands for $q\to 1$ inside some angular sector $-\pi/2+\varepsilon<\hbox{Arg}(1-q)<\pi/2-\varepsilon$ centered at $1$, with a fixed $\varepsilon>0$.

%%%%%%%%%%%%%%%%%%%%%%%%%%%%%%%%%%%%%%%

\subsection{Schlesinger--Zudilin $q$-model}
\label{ssect:SZ-qMZVs}

In \cite{Schlesinger01} and \cite{Zudilin03} the authors defined the following $q$-analogues of MZVs given by
\begin{align}\label{eq:defSZ}
 \z^{\operatorname{SZ}}(k_1,\ldots,k_n)&:=\sum_{m_1>\cdots >m_n>0} \frac{q^{m_1k_1+\cdots +m_nk_n}}{(1-q^{m_1})^{k_1}\cdots (1-q^{m_n})^{k_n}}, 
\end{align} 
 for $k_1,\ldots,k_n \in \bN_0$ with $k_1\geq 1$. It is easily seen that for $k_1\geq 2$ and $k_2,\ldots,k_n\geq 1$ we obtain 
 \begin{align*}
\lim_{q \uparrow  1} (1-q)^{k_1+\cdots+k_n}\z^{\operatorname{SZ}}(k_1,\ldots,k_n) = \z(k_1,\ldots,k_n). 
 \end{align*}
 Furthermore we introduce a star version of model \eqref{eq:defSZ} which is given for any $k_1,\ldots,k_n\in \bN_0$ with $k_1\geq 1$ by 
 \begin{align}\label{eq:defSZstar}
\z^{{\operatorname{SZ}}, \star}(k_1,\ldots,k_n)&:=\sum_{m_1\geq\cdots \geq m_n>0} \frac{q^{m_1k_1+\cdots +m_nk_n}}{(1-q^{m_1})^{k_1}\cdots (1-q^{m_n})^{k_n}}.
\end{align}
This model plays a distinguished role with respect to the OOZ-model, which will be discussed in the next paragraph. 
Again we observe for $k_1\geq 2$ and $k_2,\ldots,k_n\geq 1$ that
 \begin{align*}
\lim_{q \uparrow  1} (1-q)^{k_1+\cdots+k_n}\z^{{\operatorname{SZ}},\star}(k_1,\ldots,k_n) = \z^\star(k_1,\ldots,k_n):=\sum_{m_1 \ge \cdots \ge m_n>0}\frac{1}{m_1^{k_1}\cdots m_n^{k_n}}, 
 \end{align*}
 where $\z^\star(k_1,\ldots,k_n)$ are called \emph{multiple zeta star values} (MZSVs).

\begin{remark}
Note that \eqref{eq:defSZ} constitutes what is usually called a modified (Schlesinger--Zudilin) $q$-MZV, and the proper $q$-MZV is given by $(1-q)^{k_1+\cdots+k_n}\z^{\operatorname{SZ}}(k_1,\ldots,k_n)$. The same applies to the $q$-MZSV defined in \eqref{eq:defSZstar}. In the following we will work exclusively with \eqref{eq:defSZ}  and \eqref{eq:defSZstar}.    
\end{remark}

\smallskip

Using the notations of Section \ref{sect:words} again, we define the $\lambda$-weighted quasi-shuffle type product $m^{(\lambda)}_{\ast} \colon \fH^1\otimes \fH^1 \to \fH^1$, $u \ast_{\lambda} v:=m^{(\lambda)}_{\ast} (u \otimes v)$, iteratively by 
\begin{enumerate}[(QS1)]
 \item $\be \ast_{\lambda} w :=w \ast_{\lambda} \be:= w$, 
 \item $z_n u \ast_{\lambda} z_m v := z_n (u \ast_{\lambda} z_m v) + z_m(z_n u \ast_{\lambda} v)+\lambda z_{n+m}(u\ast_{\lambda} v)$
\end{enumerate}
for any words $w,u,v\in \fH^1$ and $n,m\in \bN_0$. The parameter $\lambda$ will stand for $1$ or $-1$. The case $\lambda=1$ corresponds to the natural product satisfied by the $q$-MZVs defined in \eqref{eq:defSZ}. It coincides with the quasi-shuffle product of classical MZVs. Note that $(\fH^1,m^{(1)}_\ast,\Delta)$, where $\Delta$ is given by deconcatenation \eqref{eq:decon}, defines a quasi-shuffle Hopf algebra. The case $\lambda=-1$ corresponds to the product satisfied by the $q$-MZSVs defined in \eqref{eq:defSZstar}. Now we introduce the $\lambda$-weighted shuffle product $m^{(\lambda)}_{\sh} \colon \fH^1 \otimes \fH^1 \to \fH^1$, $u \sh_{\lambda} v:=m^{(\lambda)}_{\sh} (u \otimes v)$, by 
\begin{enumerate}[(SH1)]
 \item $\be\sh_\lambda w := w\sh_\lambda \be:= w$, 
 \item $yu \sh_\lambda v:= u\sh_\lambda yv:= y(u\sh v)$,
 \item $pu \sh_\lambda pv:=p(u \sh_\lambda pv) + p(pu \sh_\lambda v) + \lambda p(u\sh_\lambda v)$.
\end{enumerate}
\smallskip
\noindent Defining the map $\zeta^{\operatorname{SZ}}\colon \fH^0\to \bQ[[q]]$ by 
$$
	\zeta^{\operatorname{SZ}}(z_{k_1}\cdots z_{k_n}):=\zeta^{\operatorname{SZ}}(k_1,\ldots,k_n)
$$ 
we can state the following result of \cite{Singer15}: 

\begin{theorem}
The map $\z^{\operatorname{SZ}}\colon \fH^0\to \bQ[[q]]$ is an algebra morphism on both algebras $(\fH^0,m^{(1)}_{\ast})$ and $(\fH^0,m^{(1)}_{\sh})$. Especially, for any words $u,v\in \fH^0$ we have the $q$-analogue of the double shuffle relation
\begin{align*}
 \z^{\operatorname{SZ}}(u\sh_{1} v - u\ast_{1} v) = 0. 
\end{align*} 
\end{theorem}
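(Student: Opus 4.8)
The plan is to verify the two morphism statements separately and then obtain the displayed identity as their difference. Throughout write $g(m):=q^m/(1-q^m)$, so that the factor indexed by $m_i$ in \eqref{eq:defSZ} is $g(m_i)^{k_i}$ and a word $z_{k_1}\cdots z_{k_n}\in\fH^0$ (i.e. $k_1\ge 1$, $k_j\in\bN_0$) is sent to $\sum_{m_1>\cdots>m_n>0}\prod_i g(m_i)^{k_i}$. First I would note that this series converges in $\bQ[[q]]$ and that $\fH^0$ is stable under both products: each summand has $q$-adic valuation $\sum_i m_i k_i\ge m_1 k_1\ge m_1$, so for fixed $N$ only the finitely many tuples with $m_1\le N$ contribute to the coefficient of $q^N$ (here the hypothesis $k_1\ge 1$ is essential).

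For $(\fH^0,m^{(1)}_{\ast})$ I would argue directly from the sum representation. The product $\z^{\operatorname{SZ}}(u)\,\z^{\operatorname{SZ}}(v)$ is a sum over a pair of strictly decreasing chains of indices; splitting the Cartesian product of the two index ranges according to the order type of the merged indices reproduces rule (QS2). A genuine interleaving in which both chains stay strictly decreasing yields the two terms $z_n(u\ast_{1} z_m v)$ and $z_m(z_n u\ast_{1} v)$, while a collision $m_i=m_j$ carrying exponents $n$ and $m$ contributes $g(m_i)^n g(m_i)^m=g(m_i)^{n+m}$, i.e. the term $z_{n+m}(u\ast_{1} v)$ with coefficient $\lambda=1$. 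This is precisely Hoffman's quasi-shuffle recursion for the commutative letter product $z_n\diamond z_m=z_{n+m}$, which explains why the SZ quasi-shuffle coincides with that of classical MZVs; hence $\z^{\operatorname{SZ}}$ is multiplicative for $\ast_{1}$.

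For $(\fH^0,m^{(1)}_{\sh})$ I would use the Rota--Baxter description of the model from \cite{Singer15}, in which $\z^{\operatorname{SZ}}(w)$ is realised as an iterated Jackson $q$-integral: reading $w$ letter by letter, $p$ is sent to a weight-one Rota--Baxter operator $P$ (the $q$-analogue of integration) and $y$ to the fixed multiplication that closes a block and renders the word convergent. The defining rules are then exactly the combinatorics of products of such iterated expressions: rule (SH3) is the weight-one Rota--Baxter identity $P(f)P(h)=P\bigl(P(f)h\bigr)+P\bigl(fP(h)\bigr)+\lambda P(fh)$ applied when two words both begin with $p$, the $\lambda$-term being the diagonal contribution, while rule (SH2) records that the multiplication letter $y$ simply passes through a shuffle. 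I would then prove, by induction on total word length, that the iterated integral attached to $u\sh_{1} v$ equals the product of those attached to $u$ and $v$: peeling the outermost letters off $u$ and $v$ and invoking either the Rota--Baxter relation (two leading $p$'s) or (SH2) (a leading $y$) matches the recursive definition of $\sh_{1}$ step by step. Consequently $\z^{\operatorname{SZ}}$ is multiplicative for $\sh_{1}$ as well.

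The main obstacle is this second part: one must set up the operator representation and its evaluation map with enough care that the boundary and diagonal terms are tracked correctly, check that it is indeed the weight-one (rather than the classical weight-zero) Rota--Baxter identity which appears, and control convergence of all the nested sums in $\bQ[[q]]$. Granting both morphism statements, the stated relation is immediate: for $u,v\in\fH^0$ one has $\z^{\operatorname{SZ}}(u\ast_{1} v)=\z^{\operatorname{SZ}}(u)\,\z^{\operatorname{SZ}}(v)=\z^{\operatorname{SZ}}(u\sh_{1} v)$, so $\z^{\operatorname{SZ}}(u\sh_{1} v-u\ast_{1} v)=0$.
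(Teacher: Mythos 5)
Your proposal is correct and follows essentially the same route as the paper: the theorem is stated there without proof as a citation of \cite{Singer15}, and your two-step argument --- the stuffle decomposition of the pair of strictly decreasing index chains (with collisions producing $z_{n+m}$) for $m^{(1)}_{\ast}$, and the weight-one Rota--Baxter representation $\z^{\operatorname{SZ}}(p^{k_1}y\cdots p^{k_n}y)=\oP_q^{k_1}\bigl[y\,\oP_q^{k_2}[\,\cdots\oP_q^{k_n}[y]\cdots]\bigr](t)\big|_{t=1}$ together with the identity $\oP_q[f]\oP_q[h]=\oP_q[\oP_q[f]h]+\oP_q[f\oP_q[h]]+\oP_q[fh]$ for $m^{(1)}_{\sh}$ --- is precisely the proof strategy of that reference, the same machinery the present paper itself deploys in its proof of Theorem \ref{thm:zhao}. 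Subtracting the two multiplicativity statements then yields the displayed $q$-double-shuffle relation, exactly as intended.
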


First we introduce the linear map $\widetilde\tau \colon \bQ\langle p,y \rangle \to \bQ\langle p,y \rangle$ 
\begin{align}\label{eq:antiq}
	\widetilde\tau(p):=y \qquad \widetilde\tau(y):=p,
\end{align}
which is extended to words as an antiautomorphism. Again, this induces the linear isomorphism $\widetilde\tau \colon \fH^{(-1)} \to \fH^{1}$. By further restriction we obtain the automorphism $\widetilde\tau\colon \fH^0 \to \fH^0$, which yields the following duality result due to Zhao.

\begin{theorem}\cite[Theo. 8.3]{Zhao14}\label{thm:zhao}
 For any word $w\in \fH^0$ we have $\z^{\operatorname{SZ}}(w)=\z^{\operatorname{SZ}}\big(\widetilde\tau(w)\big)$.
\end{theorem}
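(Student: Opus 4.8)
The plan is to prove the identity without any $q$-integral reflection, by producing a double-series representation of $\z^{\operatorname{SZ}}(w)$ that is \emph{manifestly} invariant under $\widetilde\tau$. Write $w=z_{k_1}\cdots z_{k_n}=p^{k_1}y\cdots p^{k_n}y\in\fH^0$, so that $w$ begins with $p$ and ends with $y$. I would start from the defining series and expand each factor geometrically, $\left(\tfrac{q^{m_i}}{1-q^{m_i}}\right)^{k_i}=\sum_{j_{i,1},\dots,j_{i,k_i}\ge 1}q^{m_i(j_{i,1}+\cdots+j_{i,k_i})}$, thereby attaching one free index $j_{i,a}\ge 1$ to each letter $p$ occurring in $w$. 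Simultaneously I would parametrise the strictly decreasing chain $m_1>\cdots>m_n>0$ by its consecutive differences $e_l:=m_l-m_{l+1}\ge 1$ (with $e_n:=m_n$), attaching one free index $e_l\ge 1$ to each letter $y$.

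The key step is then a discrete Fubini reindexing. Using $m_i=\sum_{l\ge i}e_l$ and writing $s_i:=\sum_a j_{i,a}$ one finds $\sum_i m_i s_i=\sum_{i\le l}s_i\,e_l$, and since the $p$-letters of block $i$ precede the $y$-letter of block $l$ exactly when $i\le l$, the exponent collapses to a sum over ordered pairs consisting of a $p$-letter lying to the left of a $y$-letter. This yields the representation
\[
\z^{\operatorname{SZ}}(w)=\sum_{\substack{j_P\ge 1\ (P\in\mathcal P_w)\\ e_Y\ge 1\ (Y\in\mathcal Y_w)}} q^{\sum_{P\prec Y}j_P e_Y},
\]
where $\mathcal P_w$ and $\mathcal Y_w$ are the sets of $p$- and $y$-letters of $w$ and $P\prec Y$ means $P$ occurs to the left of $Y$. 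I would first confirm that this is a well-defined element of $\bQ[[q]]$: because $w$ starts with $p$ and ends with $y$, the first letter pairs with every $y$-letter and the last letter pairs with every $p$-letter, so once the total exponent is fixed each index is bounded and every coefficient is a finite sum; this legitimises the rearrangement.

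The conclusion is then immediate. The involution $\widetilde\tau$ reverses $w$ and interchanges $p\leftrightarrow y$, hence it carries $\mathcal P_w$ onto $\mathcal Y_{\widetilde\tau(w)}$ and $\mathcal Y_w$ onto $\mathcal P_{\widetilde\tau(w)}$, while reversal turns each pair $P\prec Y$ in $w$ into a pair (image of $Y$, now a $p$-letter) $\prec$ (image of $P$, now a $y$-letter) in $\widetilde\tau(w)$. Relabelling the summation indices along this bijection identifies the double series for $\z^{\operatorname{SZ}}(\widetilde\tau(w))$ with that for $\z^{\operatorname{SZ}}(w)$ monomial by monomial, each pair contributing $j_P e_Y$ in either reading, which gives $\z^{\operatorname{SZ}}(w)=\z^{\operatorname{SZ}}(\widetilde\tau(w))$.

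I expect the genuine care to lie in two bookkeeping points rather than in any analytic estimate: verifying the reindexing and the pairing bijection for blocks of the form $z_0=y$, which carry a $y$-letter but no $p$-letter (contributing an index $e_Y$ but no $j_P$), and checking throughout that the hypothesis $w\in\fH^0$ — equivalently that $w$ neither begins with $y$ nor ends with $p$ — is exactly what guarantees both convergence of the double series and the stability $\widetilde\tau(\fH^0)=\fH^0$ recorded in Section~\ref{sect:words}. A route through an iterated Jackson $q$-integral together with a $q$-reflection is also conceivable, but the double-series form above sidesteps the delicate substitution rules for $q$-integrals and reduces the duality to a pure relabelling of indices.
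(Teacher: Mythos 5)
Your argument is correct, and it takes a genuinely different route from the paper's. The paper proves Theorem \ref{thm:zhao} via the Rota--Baxter operator $\oP_q[f](t)=\sum_{m\ge 1}f(q^mt)$: using the representation $\z^{\operatorname{SZ}}(p^{k_1}y\cdots p^{k_n}y)=\oP_q^{k_1}[y\oP_q^{k_2}[y\cdots\oP_q^{k_n}[y]\cdots]](t)\big|_{t=1}$ from \cite{Singer15}, it evaluates the iterated operator expression for the reversed word $\widetilde\tau(w)=py^{k_n}\cdots py^{k_1}$, where each of the $n$ letters $p$ contributes one summation index $m_j$ while each block $y^{k_i}$ stays packaged as a closed factor $q^{Mk_i}t^{k_i}/(1-q^{M}t)^{k_i}$; the telescoping substitution $m_i'=m_i+\cdots+m_n$ then reproduces the defining strictly decreasing chain for $\z^{\operatorname{SZ}}(w)$. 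Your reparametrization $e_l=m_l-m_{l+1}$ is exactly the inverse of that telescoping change of variables, but you go one step further and also expand every factor $\big(q^{m}/(1-q^{m})\big)^{k}$ geometrically, arriving at the fully symmetric series $\sum q^{\sum_{P\prec Y}j_Pe_Y}$ in which the duality becomes a pure relabelling of indices; the paper never needs your $j$-indices because its half-expanded operator computation already lands on the defining series of $w$. What your version buys is manifest $\widetilde\tau$-invariance, with no operator formalism, and your two bookkeeping points are handled soundly: the $z_0=y$ blocks simply carry an $e$-index and no $j$-index, and formal convergence in $\bQ[[q]]$ holds because the first letter being $p$ and the last being $y$ forces every index to be bounded by the total exponent. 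What the paper's version buys is brevity and reuse of machinery: the same operator calculus, with the invertible $P_q=\Id+\oP_q$ in place of $\oP_q$, immediately gives the companion duality $\z^{\operatorname{OOZ}}(w)=\z^{\operatorname{SZ},\star}(\widetilde\tau(w))$ of Theorem \ref{theo:dualOOZ}. Incidentally, the alternative route you mention at the end is essentially the paper's, except that no $q$-reflection step is needed: the operator representation of $\widetilde\tau(w)$ telescopes directly into the sum defining $\z^{\operatorname{SZ}}(w)$.
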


\noindent The first nontrivial relation arising from Theorem \ref{thm:zhao} is:
\begin{equation*}
\zeta^{\operatorname{SZ}}(2)=\zeta^{\operatorname{SZ}}(ppy)=\zeta^{\operatorname{SZ}}(pyy)=\zeta^{\operatorname{SZ}}(1,0)=\sum_{k,l>0}(k-1)q^{kl}.
\end{equation*}

 \begin{proof}
We include brief demonstration of Theorem \ref{thm:zhao}. As in \cite{Singer15} we introduce the Rota--Baxter operator 
 \begin{align*}
  \oP_q[f](t):=\sum_{m \ge 1}f(q^mt)
 \end{align*}
 of weight one. Further let $y(t):=\frac{t}{1-t}\in t\bQ[[t]]$. Using the identity \cite[Prop. 2.6 (i)]{Singer15}
 \begin{align*}
  \z^{\operatorname{SZ}}(p^{k_1}y\cdots p^{k_n}y) =\left. \oP_q^{k_1}[y \oP_q^{k_2}[y \cdots \oP_q^{k_n}[y]\cdots]](t)\right|_{t=1}  
 \end{align*}
 we calculate 
 \begin{align*}
  \lefteqn{\z^{\operatorname{SZ}}(\tau_q(p^{k_1}y\cdots p^{k_n}y)) = \z^{\operatorname{SZ}}(py^{k_n}\cdots py^{k_1})} \\
   &=~ \left. \oP_q[y^{k_n} \oP_q[y^{k_{n-1}} \cdots \oP_q[y^{k_1}]]](t)\right|_{t=1}\\
   &=~\left. \sum_{m_1,\ldots,m_n> 0} \frac{q^{(m_1+\cdots +m_n)k_1}t^{k_1}}{(1-q^{m_1+\cdots+m_n}t)^{k_1}} \frac{q^{(m_2+\cdots +m_n)k_2}t^{k_2}}{(1-q^{m_2+\cdots+m_n}t)^{k_2}}\cdots \frac{q^{m_nk_n}t^{k_n}}{(1-q^{m_n}t)^{k_n}}\right|_{t=1} \\
   &=~ \sum_{m_1> m_2 > \cdots > m_n >0} \frac{q^{m_1k_1+\cdots +m_nk_n}}{(1-q^{m_1})^{k_1}\cdots(1-q^{m_n})^{k_n} } \\
   &=~ \z^{\operatorname{SZ}}(p^{k_1}y\cdots p^{k_n}y). 
 \end{align*}
 \end{proof}

Again, we are in the position to apply Proposition \ref{prop:maindual} in the context of the quasi-shuffle Hopf algebra $(\fH^1,m^{(1)}_\ast,\Delta)$ and the character $\z^{\operatorname{SZ}}$. Since $\fH^0$ is a subalgebra of $\fH^1$ it is also a subalgebra of $\fH_\square^1$, and we obtain the algebra $(\fH^0, m^{(1)}_\square)$. Here we find the same situation as in \cite{Zudilin15}, i.e., the product  $m^{(1)}_\square = \widetilde\tau \circ m^{(1)}_\ast \circ (\widetilde\tau \otimes \widetilde\tau)$, $a \square_{1} b:= m^{(1)}_\square (a \otimes b)$ equals the shuffle product $m^{(1)}_\sh$.

\begin{theorem}\label{theo:SZdual}
 The product $m^{(1)}_\square \colon \fH^0\otimes \fH^0\to \fH^0$ induced by Proposition \ref{prop:maindual} coincides with $m^{(1)}_\sh\colon \fH^0\otimes \fH^0\to \fH^0$. In particular, we have the double shuffle relations
 \begin{align*}
 \z^{\operatorname{SZ}}(u\square_{1} v - u\ast_{1} v) = 0
\end{align*} 
for any words $u,v\in \fH^0$.
\end{theorem}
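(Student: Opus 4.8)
The plan is to split the statement into its soft part and its combinatorial core. The double shuffle relation is the soft part: applying Proposition \ref{prop:maindual} with $T=\widetilde\tau$, the character transported to $(\fH^0,m^{(1)}_\square)$ is $\xi_\square=\z^{\operatorname{SZ}}\circ\widetilde\tau$, which by Theorem \ref{thm:zhao} is nothing but $\z^{\operatorname{SZ}}$ itself. Hence $\z^{\operatorname{SZ}}$ is multiplicative for $m^{(1)}_\square$ as well as for $m^{(1)}_\ast$, and subtracting the two multiplicativity identities gives $\z^{\operatorname{SZ}}(u\square_1 v-u\ast_1 v)=0$ for $u,v\in\fH^0$. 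So the genuine content is the purely algebraic identity $m^{(1)}_\square=m^{(1)}_\sh$ on $\fH^0$. Since $\widetilde\tau$ is an involution, $a\square_1 b=\widetilde\tau\big(\widetilde\tau(a)\ast_1\widetilde\tau(b)\big)$, and the desired equality $a\square_1 b=a\sh_1 b$ is equivalent to the intertwining relation
\[
  \widetilde\tau(u\ast_1 v)=\widetilde\tau(u)\sh_1\widetilde\tau(v),
\]
which I would prove for all $u,v\in\fH^1$ and then restrict, since $\widetilde\tau$ preserves $\fH^0$. Here the shuffle $\sh_1$ is regarded as the product on all of $\fH$ well-defined by (SH1)--(SH3).

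The first thing to get right is directionality, which I expect to be the main obstacle. The anti-automorphism $\widetilde\tau$ is order-reversing and sends the block $z_k=p^ky$ to the dual block $\widetilde\tau(z_k)=py^k$. Both the quasi-shuffle recursion (QS2) and the shuffle recursion (SH3) peel from the left, but $\widetilde\tau$ turns a left factor into a right factor, so a naive induction will not align. The remedy is to run the induction (on the number of blocks) using the suffix form of the quasi-shuffle recursion,
\[
  u'z_n\ast_1 v'z_m=(u'\ast_1 v'z_m)z_n+(u'z_n\ast_1 v')z_m+(u'\ast_1 v')z_{n+m},
\]
which is valid because $m^{(1)}_\ast$ is invariant under reversal of the block sequence. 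Applying $\widetilde\tau$ converts the trailing blocks $z_n,z_m$ into the \emph{leading} dual blocks $py^n,py^m$; after invoking the induction hypothesis on the three shorter products, the right-hand side becomes
\[
  py^n(A\sh_1 py^m B)+py^m(py^n A\sh_1 B)+py^{n+m}(A\sh_1 B),
\]
where $A=\widetilde\tau(u')$ and $B=\widetilde\tau(v')$ lie in $\fH^{(-1)}$, i.e.\ begin with $p$ or are empty.

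It then remains to identify this sum with $py^n A\sh_1 py^m B=\widetilde\tau(u)\sh_1\widetilde\tau(v)$, and this is the one product-specific step. I would prove the block recursion
\[
  py^n A\sh_1 py^m B=py^n(A\sh_1 py^m B)+py^m(py^n A\sh_1 B)+py^{n+m}(A\sh_1 B)
\]
directly from the definition of $\sh_1$: applying (SH3) to the two $p$-headed words produces three terms, and each is simplified using the observation, read off from (SH2), that leading $y$'s may be pulled out front one at a time, so that $y^aC\sh_1 y^bD=y^{a+b}(C\sh_1 D)$ whenever $C,D$ do not begin with $y$. A point worth flagging is that, unlike an ordinary shuffle, no binomial multiplicities arise here, because (SH2) is deterministic (it carries no branching term); this is exactly what makes the three (SH3)-terms collapse onto the required expression rather than overcounting.

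The base case of the induction is the situation in which $u$ or $v$ equals $\be$, where both sides reduce to $\widetilde\tau$ of the other word by (QS1) and (SH1). Modulo the directional bookkeeping above, the induction is routine, and I expect the only real subtlety to be the two points just isolated: using the suffix recursion together with the dual blocks $py^k$ so that $\widetilde\tau$ matches (SH3), and handling (SH2) so that $y$-letters concatenate rather than interleave with multiplicity. Once the intertwining identity is established on $\fH^1$, restricting to $\fH^0$ yields $m^{(1)}_\square=m^{(1)}_\sh$ there, and the double shuffle relation follows as explained in the first paragraph.
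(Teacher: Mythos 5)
Your proposal is correct and takes essentially the same route as the paper: the paper also establishes $m^{(1)}_\square=m^{(1)}_\sh$ by induction on words, applying $\widetilde\tau$ together with the suffix form of the quasi-shuffle recursion to the trailing $z$-letters and matching the resulting three terms against (SH1)--(SH3), so your uniform block recursion $py^nA\sh_1 py^mB=py^n(A\sh_1 py^mB)+py^m(py^nA\sh_1 B)+py^{n+m}(A\sh_1 B)$ merely consolidates the paper's three cases ($a=c=1$; $a=1, c\ge 2$; $a,c\ge 2$) into one. Your derivation of the double shuffle relation from Proposition \ref{prop:maindual} combined with Theorem \ref{thm:zhao} is likewise the argument the paper intends.
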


\begin{proof}
Let $X:=\{p,y\}$. For $u\in \fH^0$ we observe that $ \be \square_1 u = u \square_1 \be = \widetilde\tau (\be \ast_1 \widetilde\tau(u)) = u.$ Let $u',v'\in pX^\ast y$ then there exist $a,b,c,d\geq 1$ and $u,v\in pX^\ast y\cup \{\be\}$ such that $u'=p^ay^b u$ and $v'=p^cy^d v$. We observe that the product
 \begin{align*}
  p^ay^b u \square_{1} p^cy^d v  = \widetilde\tau \left(\widetilde\tau (p^ay^b u) \ast_1 \widetilde\tau(p^cy^d v)\right) 
   =  \widetilde\tau \left(\widetilde\tau(u) z_b z_0^{a-1} \ast_1 \widetilde\tau(v)z_dz_0^{c-1}\right).
 \end{align*}
Now we distinguish three cases: 
 \begin{itemize}
  \item Case 1: $a=c=1$ 
   \begin{align*}
  py^b u \square_1 py^d v  =&~  \widetilde\tau \left(\widetilde\tau(u) z_b \ast_1 \widetilde\tau(v)z_d\right)\\
   = &~  \widetilde\tau \left\{(\widetilde\tau(u) \ast_1 \widetilde\tau(v)z_d)z_b 
   				+ (\widetilde\tau(u) z_b \ast_1 \widetilde\tau(v))z_d\right. 
				+  \left.(\widetilde\tau(u) \ast_1 \widetilde\tau(v))z_{b+d}\right\}\\
   = &~  py^b (u\square_1 py^d v) + py^d (py^b u \square_1 v)  +py^{b+d}(u\square_1 v).
 \end{align*}
 \item Case 2: $a=1,c\geq 2$ 
    \begin{align*}
  py^b u \square_{1} p^cy^d v  =
  &~  \widetilde\tau  \left(\widetilde\tau(u) z_b \ast_1 \widetilde\tau(v)z_dz_0^{c-1}\right)\\
  = &~  \widetilde\tau  \left\{(\widetilde\tau(u) \ast_1 \widetilde\tau(v)z_dz_0^{c-1})z_b
  	+ (\widetilde\tau(u) z_b \ast_1 \widetilde\tau(v)z_dz_0^{c-2})z_0\right.  
	+  \left.(\widetilde\tau(u) \ast_1 \widetilde\tau(v)z_dz_0^{c-2})z_{b}\right\} \\
  = &~  py^b (u\square_1 p^cy^d v) + p (py^b u \square_1 p^{c-1}y^dv) + py^b(u\square_1 p^{c-1}y^dv).
 \end{align*}
 The case $a\geq 2,c=1$ is analogues. 
 \item Case 3: $a,c\geq 2$ 
     \begin{align*}
  \lefteqn{p^ay^bu \square_1 p^cy^dv  =
   \widetilde\tau  \left(\widetilde\tau(u) z_bz_0^{a-1} \ast_1 \widetilde\tau(v)z_dz_0^{c-1}\right)}\\
   = &~  \widetilde\tau  \left\{(\widetilde\tau(u)z_bz_0^{a-2} \ast_1 \widetilde\tau(v)z_dz_0^{c-1})z_0  
   		+ (\widetilde\tau(u) z_bz_0^{a-1} \ast_1 \widetilde\tau(v)z_dz_0^{c-2})z_0 \right.
   		+  \left.(\widetilde\tau(u)z_b z_0^{a-2} \ast_1 \widetilde\tau(v)z_dz_0^{c-2})z_0\right\}\\
   = &~  p (p^{a-1}y^bu\square_1 p^cy^d v) 
   		+ p (p^ay^b u \square_1 p^{c-1}y^dv) 
   		+ p(p^{a-1}y^b u\square_1 p^{c-1}y^dv).
 \end{align*}
 \end{itemize}
It is easy to see that the three cases reduce to (SH1), (SH2) and (SH3), respectively.   
\end{proof}

%%%%%%%%%%%%%%%%%%%%%%%%%%%%%%%%%%%%%%%

\subsection{Ohno--Okuda--Zudilin $q$-model}
\label{ssect:OOZ-qMZVs}

Now we consider another $q$-model introduced by Ohno, Okuda and Zudilin (OOZ) in \cite{Ohno12}. Let $k_1,\ldots,k_n\in \bZ$. Then the (modified) OOZ $q$-MZVs are defined by
\begin{align}\label{eq:defOOZ}
 \z^{\operatorname{OOZ}}(k_1,\ldots,k_n):=\sum_{m_1>\cdots >m_n>0} \frac{q^{m_1}}{(1-q^{m_1})^{k_1}\cdots (1-q^{m_n})^{k_n}}.
\end{align}
Again, this is a proper $q$-analogue of MZVs thanks to the fact that the limit
\begin{align*}
 \lim_{q\uparrow 1}(1-q)^{k_1+\cdots+k_n}\z^{\operatorname{OOZ}}(k_1,\ldots,k_n) = \z(k_1,\ldots,k_n), 
\end{align*}
if $k_1\geq 2$, $k_2,\ldots,k_n\geq 1$. 

If we restrict ourselves to non-negative integers then we can introduce an algebraic setting analogues to the one for the Schlesinger--Zudilin model presented in the previous section. 
We define 
\begin{align*}
 \z^{\operatorname{OOZ}}\colon \fH^0\to \bQ[[q]], 
 	\hspace{0.5cm} 
	\z^{\operatorname{OOZ}}(z_{k_1}\cdots z_{k_n}):=\z^{\operatorname{OOZ}}(k_1,\ldots,k_n) 
\end{align*}
and 
\begin{align*}
 \z^{{\operatorname{SZ}},\star}\colon \fH^0\to \bQ[[q]], 
 	\hspace{0.5cm} 
	\z^{{\operatorname{SZ}},\star}(z_{k_1}\cdots z_{k_n}):=\z^{{\operatorname{SZ}},\star}(k_1,\ldots,k_n). 
\end{align*}

Using the linear map $\widetilde\tau$ defined in \eqref{eq:antiq} we can state the following duality theorem. 

\begin{theorem}\label{theo:dualOOZ}
For any word $w \in \fH^0$ we find that $\z^{\operatorname{OOZ}}(w) = \z^{\operatorname{SZ},\star} (\widetilde\tau(w))$.
\end{theorem}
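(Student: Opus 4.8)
The plan is to mirror the proof of Theorem~\ref{thm:zhao}, replacing the single Rota--Baxter operator used there by a strict and a non-strict variant that encode the strict versus non-strict summation ranges of the two models. First I would make $\widetilde\tau$ explicit on a word $w=z_{k_1}\cdots z_{k_n}\in\fH^0$ (so $k_1\ge 1$ and $k_j\ge 0$ for $j\ge 2$): since $\widetilde\tau$ reverses words and exchanges $p\leftrightarrow y$, one gets
\begin{align*}
 \widetilde\tau(p^{k_1}y\cdots p^{k_n}y)=p\,y^{k_n}\,p\,y^{k_{n-1}}\cdots p\,y^{k_1},
\end{align*}
which again lies in $\fH^0$ because $k_1\ge 1$. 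The claim thus becomes a concrete identity between $\z^{\operatorname{OOZ}}$ of $w$ and $\z^{\operatorname{SZ},\star}$ of this reversed-and-swapped word.

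Next I would set up generating-series representations of both models, as in the identity \cite[Prop.~2.6~(i)]{Singer15} used for $\z^{\operatorname{SZ}}$. Beside the operator $\oP_q[f](t)=\sum_{m\ge 1}f(q^m t)$ of Theorem~\ref{thm:zhao}, whose diagonal action on monomials is $\oP_q[t^d]=\frac{q^d}{1-q^d}t^d$, I would use its non-strict companion $f\mapsto\sum_{m\ge 0}f(q^m t)$, acting by $t^d\mapsto\frac{1}{1-q^d}t^d$, together with the multiplications by $y(t)=\frac{t}{1-t}=\sum_{e\ge 1}t^e$ and by $\frac{1}{1-t}=\sum_{e\ge 0}t^e$; the former forces a strict gap between consecutive summation indices, the latter a weak one. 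With these, $\z^{\operatorname{OOZ}}(w)$ is realized by an iterated expression in which the lone numerator $q^{m_1}$ is produced by a single strict $\oP_q$ on the outermost block and all the denominators $(1-q^{m_i})^{-k_i}$ by the non-strict operator, while $\z^{\operatorname{SZ},\star}(\widetilde\tau(w))$ is realized by the analogous expression in which the weak inequalities $a_1\ge a_2\ge\cdots\ge 1$ of the star model come from the non-strict multiplication, a single strict multiplication at the innermost letter enforcing positivity of the smallest index. All of this is legitimate as an identity of formal power series in $q$, each coefficient being a finite sum.

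The core of the proof is then to evaluate both iterated expressions at $t=1$ and to identify them. Expanding each $\oP_q$ through $\oP_q[g](t)=\sum_{m\ge 1}g(q^m t)$ produces the cumulative scalings $t\mapsto q^{m_j+\cdots}t$, exactly as in the displayed computation of Theorem~\ref{thm:zhao}; at $t=1$ these become the nested series $\sum_{m_1>\cdots>m_n>0}q^{m_1}\prod_i(1-q^{m_i})^{-k_i}$ on the OOZ side and a weakly decreasing series on the star side, and a change of summation variables (passing between successive gaps and partial sums) matches the two.

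I expect this last identification to be the main obstacle, for two reasons. First, the two models carry genuinely different data: OOZ is a strict depth-$n$ sum with a single numerator $q^{m_1}$, whereas $\z^{\operatorname{SZ},\star}(\widetilde\tau(w))$ is a weakly decreasing sum of depth $k_1+\cdots+k_n=\wt(w)$ with numerator $q^{\sum_j a_jl_j}$; as $\widetilde\tau$ does not respect the weight, this is a genuine duality (the $q$-analogue of conjugating the partition attached to the indices) rather than a term-by-term bijection. Second, blocks with $k_j=0$ make the decomposition of $\widetilde\tau(w)$ into the letters $z_k$ irregular, so it is cleanest to read $\widetilde\tau(w)$ letter by letter in the operator calculus rather than through its $z$-block form. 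A robust alternative that sidesteps the global change of variables would be an induction on the number of blocks of $w$, peeling off the outermost block and using the Rota--Baxter relation $\oP_q[f]\oP_q[g]=\oP_q[f\oP_q[g]]+\oP_q[\oP_q[f]g]+\oP_q[fg]$ together with $\sum_{m\ge 0}f(q^m t)=f(t)+\oP_q[f](t)$.
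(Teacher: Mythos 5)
There is a genuine gap, and it sits exactly where you yourself predicted: the identification of the two nested sums. Your main route represents $\z^{\operatorname{OOZ}}(w)$ and $\z^{\operatorname{SZ},\star}(\widetilde\tau(w))$ by their defining series and asserts that ``a change of summation variables (passing between successive gaps and partial sums) matches the two''. That cannot work as stated: the OOZ side is a strictly decreasing sum of depth $n$ with the single numerator $q^{m_1}$, while the star side is a weakly decreasing sum of depth $\wt(w)=k_1+\cdots+k_n$; already for $w=p^2y$ one must identify $\sum_{u>0}q^u(1-q^u)^{-2}$ with $\z^{\operatorname{SZ},\star}(1,0)=\sum_{a\ge b>0}q^a(1-q^a)^{-1}$, and these agree only after a genuine resummation (a divisor-sum rearrangement), not through a bijection of index sets. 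So your primary plan reduces the theorem to an unproven combinatorial identity that is essentially the theorem itself. Your fallback --- induction on blocks using the weight-one Rota--Baxter relation $\oP_q[f]\oP_q[g]=\oP_q[f\oP_q[g]]+\oP_q[\oP_q[f]g]+\oP_q[fg]$ --- is only named, not developed, and that identity governs products of two operator expressions (it is the engine for shuffle-type product formulas), so it is not the natural tool for an inductive proof of a single-value duality.

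The paper's proof supplies the missing idea, and it is a short pivot using exactly the tools you assembled: instead of matching the two models on $w$ and on $\widetilde\tau(w)$ separately, apply the OOZ operator representation of \cite[Eq.~(4.1)]{Castillo13b}, namely $\z^{\operatorname{OOZ}}(p^{a_1}y\cdots p^{a_N}y)=P_q^{a_1}[yP_q^{a_2}[\cdots]](t)\big|_{t=q}$ with the non-strict operator $P_q[f](t)=\sum_{m\ge 0}f(q^mt)$, to the \emph{dual} word $\widetilde\tau(w)=py^{k_n}\cdots py^{k_1}$ itself. Since each block there carries a single $p$, the iterated expression $P_q[y^{k_n}P_q[\cdots P_q[y^{k_1}]]](t)\big|_{t=q}$ evaluates in closed form to $\sum_{m_1,\ldots,m_n\ge 0}\prod_{j}\bigl(q^{m_j+\cdots+m_n}t/(1-q^{m_j+\cdots+m_n}t)\bigr)^{k_j}\big|_{t=q}$, and the partial-sum substitution $u_j=m_j+\cdots+m_n+1$ turns this directly into $\sum_{u_1\ge\cdots\ge u_n>0}\prod_j q^{u_jk_j}(1-q^{u_j})^{-k_j}=\z^{\operatorname{SZ},\star}(w)$, the numerators $q^{u_jk_j}$ coming for free from the $t^{k_j}$ in $y(t)^{k_j}$. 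Involutivity of $\widetilde\tau$ then yields the stated form $\z^{\operatorname{OOZ}}(w)=\z^{\operatorname{SZ},\star}(\widetilde\tau(w))$. In other words, the resummation you feared is performed automatically by the cited Rota--Baxter representation once it is evaluated on $\widetilde\tau(w)$ rather than on $w$; this also dissolves your worry about blocks with $k_j=0$, since the letterwise calculus ($p\mapsto P_q$, $y\mapsto$ multiplication by $y(t)$) never requires parsing the word into $z$-letters.
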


\begin{proof}
We use the Rota--Baxter operator $P_q = \Id + \oP_q$, which explicitly writes 
\begin{align}\label{eq:RotaBaxterP}
  P_q[f](t):=\sum_{m \ge 0}f(q^mt)
\end{align}
of weight $-1$, together with $y(t):=\frac{t}{1-t}\in t\bQ[[t]]$. Following \cite[Eq. (4.1)]{Castillo13b} we can write
 \begin{align*}
  \z^{\operatorname{OOZ}}(p^{k_1}y\cdots p^{k_n}y) =\left.P_q^{k_1}[yP_q^{k_2}[y \cdots P_q^{k_n}[y]\cdots]](t)\right|_{t=q},  
 \end{align*}
and calculate 
 \begin{align*}
  \lefteqn{\z^{\operatorname{OOZ}}(\widetilde\tau(p^{k_1}y\cdots p^{k_n}y)) = \z^{\operatorname{OOZ}}(py^{k_n}\cdots py^{k_1})} \\
   &=~ \left. P_q[y^{k_n}P_q[y^{k_{n-1}} \cdots P_q[y^{k_1}]]](t)\right|_{t=q}\\
   &=~\left. \sum_{m_1,\ldots,m_n\geq 0} \frac{q^{(m_1+\cdots +m_n)k_1}t^{k_1}}{(1-q^{m_1+\cdots+m_n}t)^{k_1}} \frac{q^{(m_2+\cdots +m_n)k_2}t^{k_2}}{(1-q^{m_2+\cdots+m_n}t)^{k_2}}\cdots \frac{q^{m_nk_n}t^{k_n}}{(1-q^{m_n}t)^{k_n}}\right|_{t=q} \\
   &=~ \sum_{m_1\geq m_2 \geq \cdots \geq m_n >0} \frac{q^{m_1k_1+\cdots +m_nk_n}}{(1-q^{m_1})^{k_1}\cdots(1-q^{m_n})^{k_n} } \\
   &=~ \z^{SZ,\star}(p^{k_1}y\cdots p^{k_n}y). 
 \end{align*}
 Since $\widetilde\tau$ is an antiautomorphism for words from $\bQ\langle p,y \rangle$, the proof is complete. 
 \end{proof}
 
In general $\widetilde\tau$ does not preserve the weight of a given word. Therefore we can not deduce the classical duality relation of MZVs stated in Theorem \ref{theo:classicalduality} by taking the limit $q\uparrow 1$ in the previous theorem. 
Subsequently, we prove that in leading $q$-order a $q$-version of Theorem \ref{theo:classicalduality} for the OOZ-model is valid (Corollary \ref{coro:OOZdualc}).
Recall that for $k_1\geq 2$, $k_2,\ldots,k_n \geq 1$ the (modified) Bradley--Zhao (BZ) $q$-MZVs are defined by the iterated sum
\begin{align*}
 \z^{{\operatorname{BZ}}}(k_1,\ldots,k_n):=\sum_{m_1 > \cdots > m_n>0}\frac{q^{(k_1-1)m_1+\cdots + (k_n-1)m_n}}{(1-q^{m_1})^{k_1}\cdots (1-q^{m_n})^{k_n}}. 
\end{align*}

For this model we have the following well-known result: 
\begin{theorem}\cite{Bradley05}\label{theo:BradleyDualWord}
 For any word $w\in \fh^0$ we have $\z^{\operatorname{BZ}}(w)=\z^{\operatorname{BZ}}\big(\tau(w)\big)$.
\end{theorem}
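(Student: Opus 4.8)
The plan is to $q$-deform the classical proof of duality (Theorem~\ref{theo:classicalduality}), which rests on the iterated-integral representation of $\zeta$ together with the reflection $t\mapsto 1-t$. Recall that classically one attaches to the letters $x_0$ and $x_1$ the one-forms $\tfrac{dt}{t}$ and $\tfrac{dt}{1-t}$, writes $\z(w)$ as an iterated integral over the simplex $1>t_1>\cdots>t_N>0$, and observes that $t\mapsto 1-t$ simultaneously reverses the simplex and exchanges the two forms; reversing a word and swapping $x_0\leftrightarrow x_1$ is precisely the antiautomorphism $\tau$, whence $\z(w)=\z(\tau(w))$. So first I would establish the analogous \emph{iterated Jackson $q$-integral} representation of $\z^{\operatorname{BZ}}$, in the Rota--Baxter/$q$-integral spirit already used for Theorems~\ref{thm:zhao} and~\ref{theo:dualOOZ}: attach to $x_0$ and $x_1$ two Jackson one-forms $\omega_0,\omega_1$ (degenerating to $\tfrac{dt}{t}$ and $\tfrac{dt}{1-t}$ as $q\uparrow 1$) so that the multiple Jackson integral of $\omega_{a_1}\cdots\omega_{a_N}$ over the $q$-simplex reproduces the defining series of $\z^{\operatorname{BZ}}$. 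This is checked by expanding the geometric factors $(1-q^{m_i})^{-k_i}$ and reindexing the nested sum, exactly as in the verification $\z^{\operatorname{SZ}}(p^{k_1}y\cdots p^{k_n}y)=\oP_q^{k_1}[y\cdots]\big|_{t=1}$ used above.

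Next I would introduce the $q$-reflection
\[
 \sigma(t):=\frac{1-t}{1-qt},
\]
which satisfies $\sigma\circ\sigma=\Id$, $\sigma(0)=1$ and $\sigma(1)=0$, and is the natural $q$-analogue of $t\mapsto 1-t$ (indeed $\sigma(t)\to 1-t$ as $q\uparrow 1$). The key computation is that the pullback $\sigma^\ast$ exchanges the two forms, $\sigma^\ast\omega_0=\omega_1$ and $\sigma^\ast\omega_1=\omega_0$ (up to the sign absorbed by the orientation reversal), while $\sigma$ reverses the order of the variables on the $q$-simplex. Reversing the sequence of forms and exchanging $\omega_0\leftrightarrow\omega_1$ is exactly the action of $\tau$ on the underlying word, so the multiple Jackson integral attached to $w$ equals the one attached to $\tau(w)$, giving $\z^{\operatorname{BZ}}(w)=\z^{\operatorname{BZ}}(\tau(w))$. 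Since $\tau$ respects the weight and preserves $\fh^0$ (Section~\ref{sect:words}), both sides are convergent BZ values and the statement is well posed.

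The main obstacle is analytic rather than combinatorial: $\sigma$ does \emph{not} map the Jackson grid $\{q^m\}_{m\ge 0}$ to itself, so the substitution cannot be carried out termwise in a one-dimensional Jackson integral. The correct framework is to regard $\z^{\operatorname{BZ}}(w)$ as a multiple Jackson sum over the product lattice on the simplex and to show that the reindexing of this multiple sum induced by $\sigma$ is a weight-preserving bijection interchanging the roles of $\omega_0$ and $\omega_1$; equivalently, one verifies directly the summation identity equating the two nested series for $w$ and $\tau(w)$. Establishing this bijection (Bradley's computation in \cite{Bradley05}) is the technical heart, whereas involutivity of $\sigma$, the form-exchange, and the classical limit are routine. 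Should the reflection substitution prove awkward to make rigorous, a fallback is a double induction on weight and depth using the Rota--Baxter recursion for $\oP_q$, mirroring the explicit manipulations in the proofs of Theorems~\ref{thm:zhao} and~\ref{theo:dualOOZ}.
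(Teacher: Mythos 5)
You have correctly reproduced the classical heuristic and, to your credit, correctly located where it breaks; but note first that the paper itself contains no proof of this theorem --- it is quoted from \cite{Bradley05} --- so the only thing to judge is whether your text stands alone as a proof, and it does not. The decisive step, making the reflection $\sigma(t)=\frac{1-t}{1-qt}$ act on the iterated Jackson sums, is precisely the content of the theorem, and you explicitly delegate it (``Bradley's computation in \cite{Bradley05}''), which renders the argument circular as a blind attempt. Concretely: a change-of-variables formula for Jackson integrals exists only for substitutions preserving the geometric lattice, and $\sigma$ sends the node $q^m$ to $(1-q^m)/(1-q^{m+1})$, which lies off the lattice; hence the pullback relations $\sigma^\ast\omega_0=\omega_1$ and $\sigma^\ast\omega_1=\omega_0$ have no termwise meaning, and no orientation bookkeeping repairs this. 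What is actually needed --- and what the cited source supplies --- is a genuinely combinatorial identity between the two nested series (obtained by induction with $q$-binomial manipulations, from a duality for finite multiple harmonic $q$-series passed to the limit), not a substitution. Your ``verify the summation identity directly'' sentence names this requirement but proves nothing.

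The fallback you propose is also not the routine transfer you suggest. The Rota--Baxter computations behind Theorems \ref{thm:zhao} and \ref{theo:dualOOZ} succeed because there the involution is $\widetilde\tau$ (exchanging $p$ and $y$), so the dual of $p^{k_1}y\cdots p^{k_n}y$ has the special normal form $py^{k_n}\cdots py^{k_1}$, and a single expansion of $\oP_q[y^{k}\,\cdot\,]$ (respectively $P_q$) collapses the nested sums in one stroke. For the present statement the involution is $\tau$ on $\fh$ (exchanging $x_0=p$ and $x_1=py$), dual words admit no such normal form, and the Bradley--Zhao exponents $q^{(k_i-1)m_i}$ couple the numerators to the $k_i$; mirroring the computation of Theorem \ref{thm:zhao} therefore does not reproduce $\z^{\operatorname{BZ}}(\tau(w))$, and the ``double induction on weight and depth'' would require new identities that you have not written down. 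In short: the setup (Jackson representation, the involution $\sigma$, the classical limit, well-posedness on $\fh^0$) is sound, but the technical heart of the theorem is missing rather than proved.
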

\noindent The notation $\z^{\operatorname{BZ}}(z_{k_1}\cdots z_{k_n})=\z^{\operatorname{BZ}}(x_0x_1^{k_1-1}\cdots x_0x_1^{k_n-1}):=\z^{\operatorname{BZ}}(k_1,\ldots,k_n)$ is used. It will be useful to reformulate both Theorems  \ref{thm:zhao} and \ref{theo:BradleyDualWord} in the OOZ model. Let us introduce the linear maps $U:\fh^0\to\fh^0$ and $V:\fH^0\to\fH^0$ defined by
\begin{equation}\label{triangular}
U(z_{k_1}\cdots z_{k_n}):=\sum_{{2\le r_1\le k_1 \atop 1\le r_j\le k_j,\,j\ge 2}}\binom{k_1-2}{r_1-2}\binom{k_2-1}{r_2-1}\cdots\binom{k_n-1}{r_n-1}z_{r_1}\cdots z_{r_n}
\end{equation}
and
\begin{equation}\label{triangular-bis}
V(z_{k_1}\cdots z_{k_n}):=\sum_{{1\le r_1\le k_1 \atop 0\le r_j\le k_j,\,j\ge 2}}\binom{k_1-1}{r_1-1}\binom{k_2}{r_2}\cdots\binom{k_n}{r_n}z_{r_1}\cdots z_{r_n}.
\end{equation}
\begin{proposition}\label{tranfer-bz-ooz}
We have the following:
\begin{enumerate}
\item The maps $U$ and $V$ are linear isomorphisms, with inverses given by
\begin{equation}\label{triangular-inv}
U^{-1}(z_{k_1}\cdots z_{k_n}):=\sum_{{2\le r_1\le k_1 \atop 1\le r_j\le k_j,\,j\ge 2}}(-1)^{\sum_j k_j-r_j}\binom{k_1-2}{r_1-2}\binom{k_2-1}{r_2-1}\cdots\binom{k_n-1}{r_n-1}z_{r_1}\cdots z_{r_n}
\end{equation}
and
\begin{equation}\label{triangular-inv-bis}
V^{-1}(z_{k_1}\cdots z_{k_n}):=\sum_{{1\le r_1\le k_1 \atop 0\le r_j\le k_j,\,j\ge 2}}(-1)^{\sum_j k_j-r_j}\binom{k_1-1}{r_1-1}\binom{k_2}{r_2}\cdots\binom{k_n}{r_n}z_{r_1}\cdots z_{r_n}.
\end{equation}
\item For convergent words, the Bradley--Zhao and Ohno--Okuda--Zudilin models are related by
\begin{equation}
\z^{\operatorname{OOZ}}=\z^{\operatorname{BZ}}\circ U.
\end{equation}
\item For words in $\fH^0$, the Schlesinger--Zudilin and Ohno--Okuda--Zudilin models are related by
\begin{equation}
\z^{\operatorname{OOZ}}=\z^{\operatorname{SZ}}\circ V.
\end{equation}

\end{enumerate}
\end{proposition}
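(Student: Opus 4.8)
The plan is to prove the three parts in turn, reducing everything to elementary binomial identities, all of which descend from the single expansion $\bigl(1+\tfrac{x}{1-x}\bigr)^{m}=(1-x)^{-m}$.

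For part (1), I would first observe that both $U$ and $V$ preserve the depth $n$ and are \emph{triangular} with respect to the weight grading: in the defining sums \eqref{triangular} and \eqref{triangular-bis} the index $r_j$ never exceeds $k_j$, the top term $r_j=k_j$ reproduces $z_{k_1}\cdots z_{k_n}$ with coefficient $1$, and every other term has strictly smaller weight. Hence on the finite-dimensional space of depth-$n$ words of weight at most $w$, both $U$ and $V$ act as unipotent (identity plus strictly weight-lowering) operators, and in particular are bijective; moreover $U$ preserves $\fh^0$ and $V$ preserves $\fH^0$. To obtain the explicit inverses, note that across the $n$ positions the binomial weights factorize and the summation ranges are independent, so each of $U$ and $V$ is a product of single-position transforms of the shape $g_k\mapsto\sum_{r}\binom{k-c}{r-c}g_r$, where the shift is $c=2$ for the first position and $c=1$ for the remaining positions in the case of $U$, and $c=1$ for the first position and $c=0$ for the remaining positions in the case of $V$. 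Applying the classical binomial inversion $\widetilde f_j=\sum_s\binom{j}{s}\widetilde g_s\iff\widetilde g_j=\sum_s(-1)^{j-s}\binom{j}{s}\widetilde f_s$ in each position (after the substitution $j=k-c$, $s=r-c$) yields precisely the sign-alternating formulas \eqref{triangular-inv} and \eqref{triangular-inv-bis}.

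For parts (2) and (3), I would reduce each identity to a \emph{pointwise} statement in a single variable $x=q^{m_i}$ and then feed it back into the defining iterated sums, the crucial feature of the OOZ model being that its numerator $q^{m_1}$ carries a $q$-power only in the first position. Concretely, in $\bQ[[q]]$ one has the four scalar identities
\begin{align*}
\frac{x}{(1-x)^{k}}&=\sum_{r=2}^{k}\binom{k-2}{r-2}\frac{x^{r-1}}{(1-x)^{r}}, & \frac{1}{(1-x)^{k}}&=\sum_{r=1}^{k}\binom{k-1}{r-1}\frac{x^{r-1}}{(1-x)^{r}},\\
\frac{x}{(1-x)^{k}}&=\sum_{r=1}^{k}\binom{k-1}{r-1}\frac{x^{r}}{(1-x)^{r}}, & \frac{1}{(1-x)^{k}}&=\sum_{r=0}^{k}\binom{k}{r}\frac{x^{r}}{(1-x)^{r}},
\end{align*}
each of which follows at once by pulling out the appropriate power of $x/(1-x)$ and applying $\bigl(1+\tfrac{x}{1-x}\bigr)^{m}=(1-x)^{-m}$. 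The first two identities, applied respectively to the first position (matching $q^{m_1}$ against $x^{r_1-1}$, whence the shift $r_1-2$) and to the positions $j\ge2$ (where OOZ contributes no numerator power and the identity supplies the factor $x^{r_j-1}$, whence $r_j-1$), reproduce the Bradley--Zhao numerator $q^{(r_1-1)m_1+\cdots+(r_n-1)m_n}$ and give part (2). The last two identities, applied to the first position (matching $q^{m_1}$ against $x^{r_1}$) and to the positions $j\ge2$ (supplying $x^{r_j}$), reproduce the Schlesinger--Zudilin numerator $q^{r_1m_1+\cdots+r_nm_n}$ and give part (3).

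The actual assembly is then pure bookkeeping: because the summand of $\z^{\operatorname{OOZ}}$ factorizes as a product over the indices $m_1>\cdots>m_n>0$ and the binomial weights in \eqref{triangular}, \eqref{triangular-bis} factorize over positions, one may apply the relevant pointwise identity independently in each factor and interchange the finite inner binomial sums with the outer sum over $m_1>\cdots>m_n$. This interchange is legitimate because we work in $\bQ[[q]]$, where only finitely many terms contribute to any fixed power of $q$. I expect the only genuinely delicate point to be the matching of numerator exponents: one must track that the OOZ numerator $q^{m_1}$ forces the distinguished shift $c_1\in\{1,2\}$ in the first position, while the subsequent positions, carrying no numerator $q$-power, receive the shift $c_j\in\{0,1\}$; getting these shifts to line up with the Bradley--Zhao and Schlesinger--Zudilin numerators is exactly what pins down the binomial coefficients appearing in $U$ and $V$.
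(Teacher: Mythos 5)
Your proposal is correct and follows essentially the same route as the paper: parts (2) and (3) rest on exactly the same four scalar identities expanding $\frac{q^m}{(1-q^m)^k}$ and $\frac{1}{(1-q^m)^k}$ in the basis $\frac{q^{(l-1)m}}{(1-q^m)^l}$ resp. $\frac{q^{lm}}{(1-q^m)^l}$, fed position-by-position into the iterated sums. For part (1) your position-wise binomial inversion is just a repackaging of the paper's direct verification that $U'\circ U=\Id$ via the identity $D_k^a:=\sum_{a+b+c=k}(-1)^b\frac{k!}{a!\,b!\,c!}=\delta_k^a$, with your triangularity remark a harmless (and redundant) extra.
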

\begin{proof}
Let $U'$, respectively $V'$, be the linear endomorphism of $\fh^0$, respectively $\fH^0$, given by the right-hand side of \eqref{triangular-inv}, respectively \eqref{triangular-inv-bis}. We compute
\begin{eqnarray*}
{\lefteqn{(U'\circ U)(z_{k_1}\cdots z_{k_n})=\sum_{{2\le r_1\le k_1 \atop 1\le r_j\le k_j,\,j\ge 2}}\binom{k_1-2}{r_1-2}\binom{k_2-1}{r_2-1}\cdots\binom{k_n-1}{r_n-1}U'(z_{r_1}\cdots z_{r_n})}}\\
&=&\sum_{{2\le s_1\le r_1\le k_1 \atop 1\le s_j\le r_j\le k_j,\,j\ge 2}}(-1)^{\sum_j r_j-s_j}\binom{k_1-2}{r_1-2}\binom{k_2-1}{r_2-1}\cdots\binom{k_n-1}{r_n-1}\binom{r_1-2}{s_1-2}\binom{r_2-1}{s_2-1}\cdots\binom{r_n-1}{s_n-1}z_{s_1}\cdots z_{s_n}\\
&=&\sum_{{2\le s_1\le k_1 \atop 1\le s_j \le k_j,\,j\ge 2}}
D_{k_1-2}^{s_1-2}D_{k_2-1}^{s_2-1}\cdots D_{k_n-1}^{s_n-1}z_{s_1}\cdots z_{s_n},
\end{eqnarray*}
with $D_k^a:=\sum_{a+b+c=k}(-1)^{b}\frac{k!}{a!b!c!}$. It is easily seen by expanding $x^k=(x-y+y)^k$ that $D_k^a$ reduces to the Kronecker symbol $\delta_k^a$. Hence $(U'\circ U)(z_{k_1}\cdots z_{k_n})=z_{k_1}\cdots z_{k_n}$. The verifications of $U\circ U'=\hbox{Id}_{\fh^0}$, $V'\circ V=\hbox{Id}_{\fH^0}$ and $V\circ V'=\hbox{Id}_{\fH^0}$ are entirely similar. The second assertion easily follows from the two following identities:
\begin{align*}
  \frac{q^{m}}{(1-q^m)^k}  = \sum_{l=2}^k\binom{k-2}{l-2}\frac{q^{(l-1)m}}{(1-q^m)^l} \quad \text{and} \quad
  \frac{1}{(1-q^m)^k}  =  \sum_{l=1}^k\binom{k-1}{l-1}\frac{q^{(l-1)m}}{(1-q^m)^l}.
\end{align*}
The third assertion follows from:
\begin{align*}
  \frac{q^{m}}{(1-q^m)^k}  = \sum_{l=1}^k\binom{k-1}{l-1}\frac{q^{lm}}{(1-q^m)^l} \quad \text{and} \quad
  \frac{1}{(1-q^m)^k}  =  \sum_{l=0}^k\binom{k}{l}\frac{q^{lm}}{(1-q^m)^l}.
\end{align*}
\end{proof}
\begin{remark}
We obviously have
\begin{equation*}
\z^{\operatorname{BZ}}=\z^{\operatorname{OOZ}}\circ U^{-1},\hskip 12mm \z^{\operatorname{SZ}}=\z^{\operatorname{OOZ}}\circ V^{-1}.
\end{equation*}
This could have been directly checked through the following identities:
\begin{align*}
  \frac{q^{(k-1)m}}{(1-q^m)^k} & = \sum_{l=2}^k(-1)^{k-l}\binom{k-2}{l-2}\frac{q^m}{(1-q^m)^l}, \quad\quad\quad \frac{q^{(k-1)m}}{(1-q^m)^k}  =  \sum_{l=1}^k(-1)^{k-l}\binom{k-1}{l-1}\frac{1}{(1-q^m)^l},&\\
   \frac{q^{km}}{(1-q^m)^k} & = \sum_{l=1}^k(-1)^{k-l}\binom{k-1}{l-1}\frac{q^m}{(1-q^m)^l}, \quad\quad\quad
  \frac{q^{km}}{(1-q^m)^k}  =  \sum_{l=0}^k(-1)^{k-l}\binom{k}{l}\frac{1}{(1-q^m)^l}.&
\end{align*}
\end{remark}
\noindent Hence we obtain two families of duality relations in the OOZ model:
\begin{theorem}\label{ooz-duality}
Let $w\in\fH^0$. Then
\begin{equation*}
\z^{\operatorname{OOZ}}(w)=\z^{\operatorname{OOZ}}\big(V^{-1}\circ\widetilde\tau\circ V(w)\big).
\end{equation*}
If moreover $w\in\fh^0$ we also have
\begin{equation*}
\z^{\operatorname{OOZ}}(w)=\z^{\operatorname{OOZ}}\big(U^{-1}\circ\tau\circ U(w)\big).
\end{equation*}
\end{theorem}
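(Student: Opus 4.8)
The plan is to derive Theorem~\ref{ooz-duality} directly from the two established duality results, Theorem~\ref{thm:zhao} (Zhao's duality for the Schlesinger--Zudilin model) and Theorem~\ref{theo:BradleyDualWord} (Bradley--Zhao duality), by conjugating them through the transfer maps $V$ and $U$ supplied by Proposition~\ref{tranfer-bz-ooz}. The entire argument is essentially a formal manipulation of equalities of linear maps, so the bulk of the work has already been done; what remains is to chain the identities together correctly.

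First I would treat the $V$-family. Proposition~\ref{tranfer-bz-ooz}(3) gives $\z^{\operatorname{OOZ}}=\z^{\operatorname{SZ}}\circ V$ on $\fH^0$, and part~(1) guarantees $V$ is invertible with $\z^{\operatorname{SZ}}=\z^{\operatorname{OOZ}}\circ V^{-1}$ (as recorded in the Remark). For a word $w\in\fH^0$ I would compute
\begin{equation*}
\z^{\operatorname{OOZ}}\big(V^{-1}\circ\widetilde\tau\circ V(w)\big)
=\z^{\operatorname{SZ}}\big(\widetilde\tau\circ V(w)\big)
=\z^{\operatorname{SZ}}\big(V(w)\big)
=\z^{\operatorname{OOZ}}(w),
\end{equation*}
where the first equality uses $\z^{\operatorname{SZ}}=\z^{\operatorname{OOZ}}\circ V^{-1}$, the middle equality is Theorem~\ref{thm:zhao} applied to the word $V(w)\in\fH^0$ (using that $\widetilde\tau$ preserves $\fH^0$), and the last equality is Proposition~\ref{tranfer-bz-ooz}(3). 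One point I would check explicitly is that $V$ maps $\fH^0$ into $\fH^0$, so that Theorem~\ref{thm:zhao} genuinely applies; this is immediate from \eqref{triangular-bis}, since the constraints $1\le r_1$ and $r_j\ge 0$ keep the output a combination of words of the form $z_{r_1}\cdots z_{r_n}$ with $r_1\ge 1$, hence in $\fH^0$.

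The $U$-family is entirely parallel but lives on $\fh^0$. Here I would use Proposition~\ref{tranfer-bz-ooz}(2), namely $\z^{\operatorname{OOZ}}=\z^{\operatorname{BZ}}\circ U$ together with its inverse $\z^{\operatorname{BZ}}=\z^{\operatorname{OOZ}}\circ U^{-1}$, and run the same three-step computation
\begin{equation*}
\z^{\operatorname{OOZ}}\big(U^{-1}\circ\tau\circ U(w)\big)
=\z^{\operatorname{BZ}}\big(\tau\circ U(w)\big)
=\z^{\operatorname{BZ}}\big(U(w)\big)
=\z^{\operatorname{OOZ}}(w),
\end{equation*}
now invoking Theorem~\ref{theo:BradleyDualWord} in the middle step. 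The only subtlety, and the step I expect to be the main obstacle, is a domain-compatibility issue: Theorem~\ref{theo:BradleyDualWord} is stated for $w\in\fh^0$ and uses the anti-automorphism $\tau$ of $\fh$, so I must verify that $U$ preserves $\fh^0$ and that $\tau(U(w))$ again lands in $\fh^0$ where $U^{-1}$ is defined. The constraint $r_1\ge 2$ in \eqref{triangular} shows $U$ sends convergent words to combinations of convergent words, i.e.\ $U(\fh^0)\subset\fh^0$, and since $\tau$ preserves $\fh^0$ (as noted in Section~\ref{sect:words}) the composition is well-defined. Once these membership checks are in place, both identities follow, completing the proof.
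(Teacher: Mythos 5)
Your proof is correct and follows exactly the route the paper intends: Theorem~\ref{ooz-duality} is stated there as an immediate consequence (``Hence we obtain\dots'') of conjugating Theorem~\ref{thm:zhao} by $V$ and Theorem~\ref{theo:BradleyDualWord} by $U$, using the transfer identities $\z^{\operatorname{OOZ}}=\z^{\operatorname{SZ}}\circ V=\z^{\operatorname{BZ}}\circ U$ from Proposition~\ref{tranfer-bz-ooz} and their inverted forms from the Remark. Your explicit verification that $V(\fH^0)\subset\fH^0$ and $U(\fh^0)\subset\fh^0$ (so that the two duality theorems and the inverse maps genuinely apply) is a worthwhile detail the paper leaves implicit.
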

\noindent For example, from $V^{-1}\circ\widetilde\tau\circ V(z_3)=z_1z_0^2+2z_1z_0+z_1$ and $U^{-1}\circ\tau\circ U(z_3)=z_2z_1+z_2$ we get:
\begin{eqnarray*}
\z^{\operatorname{OOZ}}(3)&=&\z^{\operatorname{OOZ}}(1)+2\z^{\operatorname{OOZ}}(1,0)+\z^{\operatorname{OOZ}}(1,0,0)\\
&=& \z^{\operatorname{OOZ}}(2,1)+\z^{\operatorname{OOZ}}(2)\\
&=&\sum_{k,l>0}\frac{k(k+1)}{2}q^{kl}.
\end{eqnarray*}
\noindent The second equality can also be obtained as a double shuffle relation \cite{Castillo13b}. Theorems \ref{thm:zhao} and \ref{theo:BradleyDualWord} can be reformulated as follows:
\begin{corollary}\label{theo:BradleyDual}
For $a_1,b_1,\ldots,a_n,b_n\in \bN$ we have 
\begin{align*}
\z^{{\operatorname{SZ}}}(a_1,\{0\}^{b_1-1},\ldots, a_n,\{0\}^{b_n-1}) 
 	&= \z^{{\operatorname{SZ}}}(b_n,\{0\}^{a_n-1},\ldots, b_1,\{0\}^{a_1-1}),\\
 \z^{{\operatorname{BZ}}}(a_1+1,\{1\}^{b_1-1},\ldots, a_n+1,\{1\}^{b_n-1}) 
 	&= \z^{{\operatorname{BZ}}}(b_n+1,\{1\}^{a_n-1},\ldots, b_1+1,\{1\}^{a_1-1}). 
\end{align*}
\end{corollary}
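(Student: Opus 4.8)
The plan is to recognize Corollary~\ref{theo:BradleyDual} as a direct transcription of the two word-level duality statements, Theorems~\ref{thm:zhao} and~\ref{theo:BradleyDualWord}, onto an explicit spanning family of words. The only work is bookkeeping: I would parametrize the monomials spanning $\fH^0$ (respectively $\fh^0$), rewrite them in the corresponding block-and-index notation, and follow how the involutive anti-automorphism $\widetilde\tau$ (respectively $\tau$) acts on them. Because $\widetilde\tau$ and $\tau$ are anti-automorphisms interchanging the two generators, their net effect on such a monomial is to reverse the sequence of blocks and simultaneously interchange the two exponents within each block; this is precisely the operation that sends the left-hand index string to the right-hand one.

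For the Schlesinger--Zudilin identity I would argue as follows. Every nonempty word of $\fH^0=\bQ\be\oplus p\bQ\langle p,y\rangle y$ begins with $p$ and ends with $y$, hence is uniquely of the form $p^{a_1}y^{b_1}\cdots p^{a_n}y^{b_n}$ with $a_i,b_i\in\bN$. Since $z_k=p^ky$, we have $p^ay^b=z_az_0^{b-1}$, so that
\begin{align*}
\z^{\operatorname{SZ}}(p^{a_1}y^{b_1}\cdots p^{a_n}y^{b_n})=\z^{\operatorname{SZ}}(a_1,\{0\}^{b_1-1},\ldots,a_n,\{0\}^{b_n-1}).
\end{align*}
Applying $\widetilde\tau$ (with $\widetilde\tau(p)=y$, $\widetilde\tau(y)=p$) reverses the word and swaps the letters, giving $\widetilde\tau(p^{a_1}y^{b_1}\cdots p^{a_n}y^{b_n})=p^{b_n}y^{a_n}\cdots p^{b_1}y^{a_1}=z_{b_n}z_0^{a_n-1}\cdots z_{b_1}z_0^{a_1-1}$, whose $\z^{\operatorname{SZ}}$-value is $\z^{\operatorname{SZ}}(b_n,\{0\}^{a_n-1},\ldots,b_1,\{0\}^{a_1-1})$. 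The first identity then follows at once from Theorem~\ref{thm:zhao}.

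The Bradley--Zhao identity is handled in the same way, now inside $\fh^0$ with the block convention $z_k=x_0^{k-1}x_1$ of Section~\ref{sect:words} and with the anti-automorphism $\tau$. Each nonempty word of $\fh^0$ is uniquely $x_0^{a_1}x_1^{b_1}\cdots x_0^{a_n}x_1^{b_n}$ with $a_i,b_i\in\bN$, and from $z_{a+1}=x_0^ax_1$, $z_1=x_1$ one gets $x_0^ax_1^b=z_{a+1}z_1^{b-1}$; hence this word carries the index string $(a_1+1,\{1\}^{b_1-1},\ldots,a_n+1,\{1\}^{b_n-1})$. Applying $\tau$ (with $\tau(x_0)=x_1$, $\tau(x_1)=x_0$) reverses and swaps as before, producing $x_0^{b_n}x_1^{a_n}\cdots x_0^{b_1}x_1^{a_1}$ with index string $(b_n+1,\{1\}^{a_n-1},\ldots,b_1+1,\{1\}^{a_1-1})$, and Theorem~\ref{theo:BradleyDualWord} gives the second identity.

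I do not anticipate a real obstacle, since all the mathematical content already resides in Theorems~\ref{thm:zhao} and~\ref{theo:BradleyDualWord} and the corollary merely repackages them on a basis. The one point that demands attention is the difference in block conventions between the two models, $z_k=p^ky$ on $\fH$ versus $z_k=x_0^{k-1}x_1$ on $\fh$, which is exactly what yields strings of $0$'s in the first identity but strings of $1$'s together with the shifts $a_i\mapsto a_i+1$ in the second. To guard against an index slip I would first check the cases $n=1$ and $n=2$ explicitly and confirm that the reversal of the outer order of blocks stays correctly synchronized with the interchange of exponents inside each block before invoking the general pattern.
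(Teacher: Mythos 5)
Your proposal is correct and coincides with the paper's own treatment: the paper gives no separate argument, presenting the corollary as a direct reformulation of Theorems \ref{thm:zhao} and \ref{theo:BradleyDualWord}, which is precisely the block-notation translation ($p^{a}y^{b}=z_{a}z_0^{b-1}$ in $\fH^0$, $x_0^{a}x_1^{b}=z_{a+1}z_1^{b-1}$ in $\fh^0$) under the reversing anti-automorphisms $\widetilde\tau$ and $\tau$ that you carry out. Your bookkeeping of the reversal and exponent swap, including the shift $a_i\mapsto a_i+1$ caused by the differing conventions $z_k=p^ky$ versus $z_k=x_0^{k-1}x_1$, is accurate.
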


\noindent Using this reformulation and Theorem \ref{ooz-duality} we immediately get:

\begin{corollary}\label{coro:OOZdualc}
We have 
 \begin{align*}
 \z^{\operatorname{OOZ}}(a_1+1,\{1\}^{b_1-1}, \ldots,a_n+1,\{1\}^{b_n-1})&= \z^{\operatorname{OOZ}}(b_n+1,\{1\}^{a_n-1}, \ldots,b_1+1,\{1\}^{a_1-1})+A(q), \\
\z^{\operatorname{OOZ}}(a_1,\{0\}^{b_1-1}, \ldots,a_n,\{0\}^{b_n-1})&= \z^{\operatorname{OOZ}}(b_n,\{0\}^{a_n-1}, \ldots,b_1,\{0\}^{a_1-1})+B(q),
 \end{align*}
 where $A(q)\in \langle \z^{\operatorname{OOZ}}(k_1,\ldots,k_n)\colon k_1\geq 2, k_2,\ldots,k_n\geq 1 \rangle_\bQ$ and $B(q) \in \langle \z^{\operatorname{OOZ}}(k_1,\ldots,k_n)\colon k_1\geq 1, k_2,\ldots,k_n\geq 0 \rangle_\bQ$. Moreover we have
 \begin{align*}
  \lim_{q \uparrow 1} (1-q)^{a_1+b_1+\cdots + a_n+b_n} A(q)=0. 
 \end{align*}
\end{corollary}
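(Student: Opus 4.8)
The plan is to read off both identities directly from the two parts of Theorem \ref{ooz-duality}, using that the transfer maps $U,U^{-1}$ and $V,V^{-1}$ are unitriangular with respect to the weight filtration, so that each produces a single leading dual word together with a controlled remainder. Throughout I write $w$ for the word in $\fH^0$ that encodes the left-hand arguments. For the first identity this is the convergent word $w=x_0^{a_1}x_1^{b_1}\cdots x_0^{a_n}x_1^{b_n}\in\fh^0$, whose weight equals $W:=a_1+b_1+\cdots+a_n+b_n$; for the second it is the admissible word $w=p^{a_1}y^{b_1}\cdots p^{a_n}y^{b_n}\in\fH^0$.

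First I would treat the identity carrying the remainder $A(q)$. The second part of Theorem \ref{ooz-duality} gives $\z^{\operatorname{OOZ}}(w)=\z^{\operatorname{OOZ}}\big(U^{-1}\circ\tau\circ U(w)\big)$. The key observation is that $U$ lowers the weight and has the identity as its leading part: in \eqref{triangular} the top-weight term (all $r_j=k_j$) carries the coefficient $\binom{k_1-2}{k_1-2}\binom{k_2-1}{k_2-1}\cdots=1$, and the same holds for $U^{-1}$ in \eqref{triangular-inv}. Since $\tau$ preserves the weight, decomposing into weight-homogeneous components shows that the weight-$W$ part of $U^{-1}\circ\tau\circ U(w)$ is exactly $\tau(w)$, while every other contribution has weight strictly less than $W$. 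A direct manipulation of the letter sequence identifies $\tau(w)=x_0^{b_n}x_1^{a_n}\cdots x_0^{b_1}x_1^{a_1}$, i.e.\ the dual argument string $(b_n+1,\{1\}^{a_n-1},\ldots,b_1+1,\{1\}^{a_1-1})$. Hence $A(q):=\z^{\operatorname{OOZ}}\big(U^{-1}\circ\tau\circ U(w)-\tau(w)\big)$ is a $\bQ$-linear combination of $\z^{\operatorname{OOZ}}$ of convergent words (as $U$, $\tau$, $U^{-1}$ all preserve $\fh^0$), so $A(q)\in\langle\z^{\operatorname{OOZ}}(k_1,\ldots,k_n)\colon k_1\ge2,\,k_j\ge1\rangle_\bQ$. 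Each such word has weight $W'<W$, so writing $(1-q)^WA(q)=(1-q)^{W-W'}\big[(1-q)^{W'}\z^{\operatorname{OOZ}}(\cdot)\big]$ and using that $(1-q)^{W'}\z^{\operatorname{OOZ}}(\cdot)\to\z(\cdot)$ is finite yields $\lim_{q\uparrow1}(1-q)^WA(q)=0$.

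For the identity carrying the remainder $B(q)$ I would argue analogously from the first part of Theorem \ref{ooz-duality}, namely $\z^{\operatorname{OOZ}}(w)=\z^{\operatorname{OOZ}}\big(V^{-1}\circ\widetilde\tau\circ V(w)\big)$. Computing $\widetilde\tau(w)=p^{b_n}y^{a_n}\cdots p^{b_1}y^{a_1}$ gives the dual string $(b_n,\{0\}^{a_n-1},\ldots,b_1,\{0\}^{a_1-1})$, in accordance with the Schlesinger--Zudilin reformulation in Corollary \ref{theo:BradleyDual}. Setting $B(q):=\z^{\operatorname{OOZ}}\big(V^{-1}\circ\widetilde\tau\circ V(w)-\widetilde\tau(w)\big)$, it suffices to note that $V$, $\widetilde\tau$ and $V^{-1}$ each map $\fH^0$ into $\fH^0$, so every word occurring has the form $z_{r_1}\cdots z_{r_m}$ with $r_1\ge1$ and $r_j\ge0$; therefore $B(q)\in\langle\z^{\operatorname{OOZ}}(k_1,\ldots,k_n)\colon k_1\ge1,\,k_j\ge0\rangle_\bQ$. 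Here no limit statement is available, precisely because $\widetilde\tau$ does not respect the weight, so the dual word $\widetilde\tau(w)$ need not be the top-weight component of $V^{-1}\circ\widetilde\tau\circ V(w)$.

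Routine bookkeeping aside, the one step that really needs care is the weight analysis in the first identity: one must verify that $U$ and $U^{-1}$ are genuinely unitriangular for the weight filtration (leading coefficient $1$) and that $\tau$ is weight-preserving, for only then is $\tau(w)$ isolated as the unique top-weight term and all remaining words forced into strictly lower weight, which is exactly what makes the weighted limit of $A(q)$ vanish. The accompanying computation of $\tau(w)$ and $\widetilde\tau(w)$ as the expected dual strings is a short direct calculation on the underlying letter sequences.
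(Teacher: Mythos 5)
Your proof is correct and follows essentially the same route as the paper: the paper likewise reads both identities off Theorem \ref{ooz-duality}, derives the vanishing of $\lim_{q\uparrow 1}(1-q)^{a_1+b_1+\cdots+a_n+b_n}A(q)$ from the triangular structure $U(w)=w+\text{terms of strictly lower weight}$ (with $\tau$ weight-preserving), and attributes the weaker control on $B(q)$ to the fact that $\widetilde\tau$ does not preserve the weight. You have merely spelled out in detail the bookkeeping that the paper dismisses as ``immediate.''
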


\begin{proof}
This is immediate. The last assertion on the term $A(q)$ is derived from the triangular structure of $U$, i.e., $U(w)=w+\hbox{terms of strictly lower weight}$. The term $B(q)$ is not so nice, due to the fact that the duality $\widetilde \tau$ does not preserve the weight.
\end{proof}

Let us go back to the duality construction of Theorem \ref{theo:dualOOZ}. For the OOZ-model we apply Proposition \ref{prop:maindual} in the context of the Hopf algebra  $(\fH^1,m^{(-1)}_\ast, \Delta)$ with character $\z^{{\operatorname{SZ}},\star}$, where $\Delta$ is defined as in \eqref{eq:decon}. We obtain the dual Hopf algebra $(\fH^{(-1)},m^{(-1)}_\square,\Delta_{\square})$. Again, $\fH^0$ is a subalgebra of $(\fH^{(-1)},m^{(-1)}_*)$ and we have the following theorem. 

\begin{theorem}\label{theo:OOZdual}
 The product $m^{(-1)}_\square\colon \fH^0\otimes \fH^0 \to \fH^0$ induced by Proposition \ref{prop:maindual} coincides with the shuffle product $m^{(-1)}_\sh\colon \fH^0\otimes \fH^0 \to \fH^0$. 
\end{theorem}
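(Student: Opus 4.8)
The plan is to prove that $m^{(-1)}_\square = \widetilde\tau \circ m^{(-1)}_\ast \circ (\widetilde\tau\otimes\widetilde\tau)$ coincides with $m^{(-1)}_\sh$ on $\fH^0$ by directly unwinding the definition of $\square_{-1}$ in terms of the $\lambda=-1$ quasi-shuffle product and comparing with the recursive rules (SH1)--(SH3) defining $\sh_{-1}$. The overall structure mirrors the proof of Theorem \ref{theo:SZdual}: for words $u',v'\in pX^\ast y$ (with $X=\{p,y\}$) I would write $u'=p^ay^bu$ and $v'=p^cy^dv$ with $a,b,c,d\ge 1$ and $u,v\in pX^\ast y\cup\{\be\}$, apply $\widetilde\tau$ to turn the $p^ay^b$-blocks at the front into $z_b z_0^{a-1}$-blocks at the back, expand the product $\ast_{-1}$ using (QS2) with $\lambda=-1$, and then reapply $\widetilde\tau$ to read off a recursion in the original alphabet. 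The base case $\be\square_{-1}u=u\square_{-1}\be=u$ follows exactly as before from $\be\ast_{-1}w=w$.

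The essential new feature compared with Theorem \ref{theo:SZdual} is the sign: in the SZ case the $\lambda=1$ quasi-shuffle reproduced exactly (SH1)--(SH3), but here the weight $\lambda=-1$ must conspire with the involution $\widetilde\tau$ to reproduce the shuffle rules (SH2) $yu\sh_{-1}v:=u\sh_{-1}yv:=y(u\sh_{-1}v)$ and (SH3) $pu\sh_{-1}pv:=p(u\sh_{-1}pv)+p(pu\sh_{-1}v)+\lambda p(u\sh_{-1}v)$ with $\lambda=-1$. So I would organise the computation by the same three cases as in the proof of Theorem \ref{theo:SZdual}: Case 1 ($a=c=1$), Case 2 ($a=1,c\ge2$, and symmetrically $a\ge2,c=1$), and Case 3 ($a,c\ge2$). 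In each case the three terms produced by (QS2) --- the two ``splitting'' terms $z_n(u\ast_{-1}z_mv)$, $z_m(z_nu\ast_{-1}v)$ and the ``merging'' term $-z_{n+m}(u\ast_{-1}v)$ --- should, after applying $\widetilde\tau$ and translating back, match the right-hand sides of (SH1)--(SH3). The key observation is that $\widetilde\tau$ sends the leading letter of a word to the trailing letter, so a front block $p^ay^b$ becomes a back block $z_bz_0^{a-1}=y^b p^{a-1}$ read in reverse, and the $\lambda=-1$ coefficient of the merging term precisely supplies the $\lambda=-1$ in rule (SH3).

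I expect the main obstacle to be purely bookkeeping: tracking exactly how $\widetilde\tau$ interacts with the block decomposition $p^ay^b$ versus $z_bz_0^{a-1}$ when forming $\widetilde\tau(p^ay^bu)=\widetilde\tau(u)z_bz_0^{a-1}$, and making sure the exponents and the sign on the merged block line up with the correct recursive term. In particular Case 2 and Case 3 each require splitting off one $p$ from the front (equivalently one $z_0$ from the back after $\widetilde\tau$), and one must verify that the three resulting terms collapse to the single shuffle rule (SH3) after accounting for the $\lambda=-1$ weight; this is where the sign bookkeeping is most delicate. Once the three cases are established, the induction on word length closes the argument, exactly as the phrase ``It is easy to see that the three cases reduce to (SH1), (SH2) and (SH3), respectively'' does in the proof of Theorem \ref{theo:SZdual}, and the coincidence $m^{(-1)}_\square=m^{(-1)}_\sh$ on $\fH^0$ follows.
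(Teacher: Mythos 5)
Your proposal is correct and is essentially the paper's own proof: the paper disposes of Theorem \ref{theo:OOZdual} in one line by observing that the argument is ``very similar to that of Theorem \ref{theo:SZdual}'' with the weight $-1$ quasi-shuffle and shuffle products replacing the weight $1$ ones, which is precisely your plan of rerunning the three cases $a=c=1$; $a=1,c\ge 2$; $a,c\ge 2$ and letting the $\lambda=-1$ merging term $-z_{n+m}(u\ast_{-1}v)$ supply the $\lambda=-1$ coefficient in (SH3). One cosmetic slip: as a word in $\{p,y\}$ one has $z_bz_0^{a-1}=p^by^a$ (not $y^bp^{a-1}$), but the formula you actually compute with, $\widetilde\tau(p^ay^bu)=\widetilde\tau(u)\,z_bz_0^{a-1}$, is the correct one.
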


\begin{proof}
The proof is very similar to that of Theorem \ref{theo:SZdual}. We only have to consider the weight $-1$ quasi-shuffle and shuffle products instead of the weight $1$ products.
\end{proof}

\begin{remark}
Note that the $q$-MZVs in the OOZ-model \eqref{eq:defOOZ} are well defined for any integer arguments. However, we are not able to extend the duality argument of Theorem \ref{theo:OOZdual} to arbitrary integers, and have to restrict to non-negative arguments. The shuffle product approach provided in \cite{Castillo13b} is valid for any integer. The reason for this is that the Rota--Baxter operator $P_q$ defined in \eqref{eq:RotaBaxterP} is invertible.  This fact was used in \cite{Ebrahimi15} to provide a renormalization of MZVs with respect to the shuffle product, which is compatible with meromorphic continuation of MZVs.  
\end{remark}

\subsection{Comparison of different $q$-models}
\label{ssect:comparison}

In this paragraph we discuss the connection of the shuffle products for the SZ- and OOZ-models. The duality construction unveils that the link corresponds to the interplay between multiple zeta and multiple zeta star values. 

Following \cite{Ihara11} we define the map $S\colon \fH^1\to \fH^1$ recursively by $ S(\mathbf{1})=\mathbf{1}$ and
\begin{align*}
\hspace{1cm} S(z_k w)= z_k S(w) + z_k \circ S(w)
\end{align*}
for any $k\in \bN$ and $w\in \fH^1$. The composition $\circ\colon \fH^1 \otimes \fH^1 \to \fH^1$ is defined by $z_k\circ \be = 0$ and
\begin{align*}
 z_{k_1}\circ (z_{k_2}w) = z_{k_1+k_2}w \hspace{0.5cm} 
\end{align*}
for any $w \in \fH^1$ and $k_1,k_2\in \bN_0$. 

Then we have the following result: 

\begin{theorem} \cite{Ihara11} \label{theo:Ihara}
We have: 
\begin{itemize}
 \item The map $S$ is an isomorphism with inverse $S^{-1}$ given by $  S^{-1}(\be) = \be$
 \begin{align*}
	S^{-1}(z_kw) = z_kS^{-1}(w)-z_k\circ S^{-1}(w)
 \end{align*}
 for any $k\in \bN_0$ and $w\in \fH^1$.
 \item The map $S\colon (\fH^1,m^{(-1)}_\ast) \to (\fH^{1},m^{(1)}_\ast)$ is an algebra isomorphism, i.e., 
 \begin{align*}
  S(u\ast_{-1}v)=S(u) \ast_{1} S(v)
 \end{align*}
for any $u,v\in \fH^1$. 
\end{itemize} 
\end{theorem}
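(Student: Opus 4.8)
Let me plan a proof of the final statement, which asserts that the map $S$ is an isomorphism (with the stated recursive inverse) and intertwines the two quasi-shuffle products, i.e., $S(u\ast_{-1}v)=S(u)\ast_1 S(v)$.

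The plan is to prove the two assertions separately, guided by the combinatorial meaning of $S$: unwinding the recursion gives $S(z_{k_1}\cdots z_{k_n})=\sum z_{\ell_1}\cdots z_{\ell_r}$, the sum ranging over all ways of grouping $(k_1,\ldots,k_n)$ into consecutive blocks with each $\ell_i$ the sum of the $k_j$ in the $i$-th block. Thus $S$ is the familiar ``star'' map sending $z_{k_1}\cdots z_{k_n}$ to the sum of its coarsenings, which is exactly the map that ought to turn the sign-twisted product $\ast_{-1}$ into the honest stuffle $\ast_1$. I would run inductions organised by the length $\ell(z_{k_1}\cdots z_{k_n}):=n$ and by the weight $\wt$, both of which behave well: $S$ preserves $\wt$, and from $S(z_kw)=z_kS(w)+z_k\circ S(w)$ one checks that $z_k\circ(-)$ never raises length, so $S(w)=w+(\text{strictly shorter terms})$.

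For the first assertion, this unitriangularity $S=\Id+N$ with $N$ strictly length-decreasing shows at once that $N$ is locally nilpotent on every element of $\fH^1$ (length is a non-negative integer), so $S$ is invertible with $S^{-1}=\sum_{j\ge 0}(-N)^j$, a finite sum on each word. To identify $S^{-1}$ with the operator $T$ given by the stated recursion $T(z_kw)=z_kT(w)-z_k\circ T(w)$, I would first record the commutation lemma
\[
 S(z_k\circ w)=z_k\circ S(w)\qquad(k\in\bN_0,\ w\in\fH^1),
\]
transparent from the coarsening picture (adding $k$ to the first index commutes with grouping, since the first index lies in the first block of every coarsening) and equally easy by induction on $\ell(w)$. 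Granting this, $S\circ T=\Id$ follows by induction on length:
\[
 S\big(T(z_kw)\big)=S\big(z_kT(w)\big)-S\big(z_k\circ T(w)\big)=\big(z_kS(T(w))+z_k\circ S(T(w))\big)-z_k\circ S(T(w))=z_kS(T(w))=z_kw,
\]
using the defining recursion of $S$, then the lemma, then the inductive hypothesis on the shorter word $w$. As $S$ is bijective, $S\circ T=\Id$ forces $T=S^{-1}$.

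For the second assertion I would induct on $\wt(u)+\wt(v)$, the base cases $u=\be$ or $v=\be$ being immediate from $S(\be)=\be$. Writing $u=z_nu'$, $v=z_mv'$ and setting $A:=S(u')$, $B:=S(v')$, I expand $u\ast_{-1}v$ by (QS2) with $\lambda=-1$, apply $S$ termwise through its recursion, and invoke the inductive hypothesis on the three shorter products, thereby rewriting everything in terms of $A$, $B$, and $S(z_nu')=z_nA+z_n\circ A$, $S(z_mv')=z_mB+z_m\circ B$. In parallel I expand the target $S(u)\ast_1 S(v)=(z_nA+z_n\circ A)\ast_1(z_mB+z_m\circ B)$ by bilinearity and (QS2) with $\lambda=+1$. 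Matching the two expansions reduces the claim to a single compatibility identity for how the merge operation $z_k\circ(-)$ interacts with $\ast_1$ on leading blocks: precisely the identity that converts the two $\circ$-terms produced by $S$, together with the $-z_{n+m}$ merge term of $\ast_{-1}$, into the $+z_{n+m}$ merge term of $\ast_1$.

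This last compatibility step is where I expect the real work to lie. The merge operator is \emph{not} a derivation for $\ast_1$ — a quick check on $z_a\ast_1 z_b$ shows the naive Leibniz rule produces six terms where there should be three — so the correct identity must encode that only the leading block of a quasi-shuffle product is affected, and it has to be proved by its own nested induction on length. The bookkeeping of the $\pm 1$ signs, the single negative merge coefficient of $\ast_{-1}$ against the uniformly positive merges created by $S$ and by $\ast_1$, is the delicate point, and it is exactly where the choice $\lambda=-1$ (rather than any other value) is forced. As a conceptual cross-check and possible alternative route, the statement can be read as a sign-reversing involution between the two combinatorial models, coarsen-then-signed-quasi-shuffle versus quasi-shuffle-then-coarsen, whose fixed points yield the common value; verifying that the remaining terms cancel in pairs is the combinatorial avatar of the same obstacle.
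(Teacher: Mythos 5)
The paper itself gives no proof of this theorem: it is quoted from \cite{Ihara11}, so the only comparison available is with your proposal on its own merits. Your treatment of the first bullet is correct and essentially complete: the coarsening description of $S$, the unitriangularity $S=\Id+N$ with $N$ strictly length-decreasing (hence local nilpotence and invertibility), the commutation lemma $S(z_k\circ w)=z_k\circ S(w)$ (which holds because $z_k\circ(z_m\circ u)=z_{k+m}\circ u$ and $z_k\circ(z_m u)=z_{k+m}u$), and the induction showing $S\circ T=\Id$ for the candidate inverse $T$ all check out, and bijectivity then forces $T=S^{-1}$.

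For the second bullet, however, your proposal stops exactly where the proof begins. You correctly reduce the homomorphism property, via the inductive hypothesis on the three shorter products, to a compatibility identity between the merge operation and $\ast_1$, and you correctly observe that $z_k\circ(-)$ is not a derivation for $\ast_1$ --- but you neither state that identity precisely nor prove it, explicitly deferring ``the real work.'' To make your plan close, set $R_k(x):=z_kx+z_k\circ x$, so that $S(z_kw)=R_k(S(w))$; writing $u=z_nu'$, $v=z_mv'$, $A=S(u')$, $B=S(v')$, your matching of expansions amounts exactly to the operator identity
\begin{align*}
R_n(x)\ast_1 R_m(y) \;=\; R_n\bigl(x\ast_1 R_m(y)\bigr)+R_m\bigl(R_n(x)\ast_1 y\bigr)-R_{n+m}(x\ast_1 y)
\end{align*}
for all $x,y\in\fH^1$ and $n,m\in\bN_0$, i.e.\ the maps $R_k$ reproduce the $\ast_{-1}$ recursion inside $(\fH^1,\ast_1)$; note how the minus sign in front of $R_{n+m}$ is where the weight $-1$ is absorbed. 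This identity is true and is provable by a double induction on lengths (expanding $R_k(z_a u)=z_kz_au+z_{k+a}u$ and using (QS2) for $\ast_1$; the base cases $x=\be$ or $y=\be$ already require a computation of the type you only gestured at). So your approach would succeed, but as submitted the main assertion of the theorem --- that $S\colon(\fH^1,m^{(-1)}_\ast)\to(\fH^1,m^{(1)}_\ast)$ is an algebra map --- rests on an unproven lemma, which is a genuine gap rather than a routine omission.
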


Therefore, putting these results together, we obtain: 

\begin{corollary}
 The following diagram commutes: 
 \begin{table}[h]
 \begin{align*}
 \xymatrix{
 \fH^0\otimes \fH^0 \ar[d]_{\widetilde\tau\otimes \widetilde\tau} \ar[rr]^{\sh_{-1}}& & \fH^0  \\
  \fH^0\otimes \fH^0 \ar[rr]^{\ast_{-1}} \ar[d]_{S\otimes S}& & \fH^0\ar[d]^{S} \ar[u]_{\widetilde\tau} \\ 
  \fH^0\otimes \fH^0 \ar[rr]^{\ast_1} & & \fH^0\ar[d]^{\widetilde\tau} \\
  \fH^0\otimes \fH^0 \ar[u]^{\widetilde\tau \otimes \widetilde\tau} \ar[rr]^{\sh_{1}}& &  \fH^0}
\end{align*}
\end{table}
\end{corollary}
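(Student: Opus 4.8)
The plan is to read the large diagram as three horizontally stacked squares sharing their intermediate rows, and to recognise each square as an instance of a statement already proved in the text; the corollary then follows by pasting, with no new computation. First I would record what each square asserts as an identity of linear maps $\fH^0\otimes\fH^0\to\fH^0$, so that the gluing becomes purely formal.

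The \emph{top} square (rows one and two) compares the two paths from $\fH^0\otimes\fH^0$ to the top-right $\fH^0$, namely $u\sh_{-1}v$ along the top edge versus $\widetilde\tau\big(\widetilde\tau(u)\ast_{-1}\widetilde\tau(v)\big)$ down-across-up; its commutativity is exactly $m^{(-1)}_\sh=\widetilde\tau\circ m^{(-1)}_\ast\circ(\widetilde\tau\otimes\widetilde\tau)$, which is Theorem~\ref{theo:OOZdual}. The \emph{middle} square (rows two and three) asserts $S(u\ast_{-1}v)=S(u)\ast_1 S(v)$, which is precisely the algebra-isomorphism statement of Theorem~\ref{theo:Ihara}. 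The \emph{bottom} square (rows three and four) asserts $m^{(1)}_\sh=\widetilde\tau\circ m^{(1)}_\ast\circ(\widetilde\tau\otimes\widetilde\tau)$, i.e.\ $u\sh_1 v=\widetilde\tau\big(\widetilde\tau(u)\ast_1\widetilde\tau(v)\big)$, which is Theorem~\ref{theo:SZdual}. Since each square commutes and adjacent squares share a common edge, the pasting lemma for commutative diagrams yields commutativity of the whole figure.

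The only point genuinely requiring care — and the one I would verify before invoking the three theorems — is that the diagram is \emph{well defined}, i.e.\ that every arrow maps into the stated space. For the horizontal arrows this is guaranteed by the cited theorems together with the fact that $\fH^0$ is a subalgebra for each of $\ast_{\pm1}$ and $\sh_{\pm1}$. For the vertical arrows one uses that $\widetilde\tau$ restricts to an involution of $\fH^0$ (recorded in Section~\ref{sect:words}) and, less immediately, that $S$ and $S^{-1}$ restrict to endomorphisms of $\fH^0$. The latter I would deduce from the recursive definition: writing a convergent word as $z_{k_1}\cdots z_{k_n}$ with $k_1\ge 1$, every word occurring in $S(z_{k_1}\cdots z_{k_n})$ has leading block index at least $k_1\ge 1$ (the composition $\circ$ only enlarges the first index) and still terminates in the letter $y$, so it again lies in $\fH^0$; the identical argument applies to $S^{-1}$. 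This bookkeeping, rather than any substantive difficulty, is the main obstacle to clear; once it is settled, the three theorems glue together immediately.
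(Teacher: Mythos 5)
Your proof matches the paper's argument exactly: the paper's one-line proof reads the diagram from top to bottom as the three squares given by Theorems \ref{theo:OOZdual}, \ref{theo:Ihara} and \ref{theo:SZdual}, just as you do. Your additional verification that $\widetilde\tau$ and $S$, $S^{-1}$ restrict to $\fH^0$ (so that all arrows are well defined) is correct and is a point the paper leaves implicit.
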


\begin{proof}
 From top to bottom the commutativity of the diagram is ensured by Theorems \ref{theo:OOZdual}, \ref{theo:Ihara} and \ref{theo:SZdual}.  
\end{proof}

%%%%%%%%%%%%%%%%%%%%%%%%%%%%%%%%%%%%%%%

%%%%%%%%%%%%%%%%%%%%%%%%%%%%%%%%%%%%%%%
\section{Further applications of the $T$-Hopf algebra to $q$-MZVs}

\subsection{A shuffle-like bialgebra structure for the OOZ-model}
\label{ssect:shuffle-bialgebra}

In \cite{Castillo13a,Castillo13b} the authors provided a description of the shuffle product for the OOZ-model. The main ingredient is a Rota--Baxter operator (RBO) approach, which substitutes for the Kontsevich integral formula in the classical MZVs case. Since the corresponding RBO is invertible, the shuffle product can be extended to arbitrary integer arguments.  

First we quickly review the algebra framework. Let $X:=\{p,d,y\}$, and let $W$ denote the set of words on the alphabet $X$ subject to the rule $pd=dp=\be$, where $\be$ denotes the empty word. Let $W_0$ be the subset of $W$ made of words ending in the letter $y$. Therefore any word $w\in W_0$ can be written as 
\begin{align*}
 w=p^{k_1}yp^{k_2}y\cdots p^{k_n}y
\end{align*}
for $k_1,\ldots,k_n\in \bZ$ using the identifications $p^{-1}=d$ and $p^0=\be$. Let $\cH$ (respectively $\cH_0$) denote the algebra $\cH:=\langle W \rangle_\bQ$  (respectively  $\cH_0:=\langle W_0 \rangle_\bQ$) spanned by the words in $W$ (respectively  $W_0$). For any $\lambda\in\bQ^{\times}$ we define the bilinear product $\sh_{\lambda}$ on $\cH$ by $\be\sh_\lambda w=w\sh_\lambda\be=w$ for any $w\in W$, and recursively for any words $u,v\in W$:
\begin{align*}
yu\sh_{\lambda}v:=u\sh_{\lambda}yv&:=y(u\sh_{\lambda}v),\\
pu\sh_{\lambda}pv&:=p(pu\sh_{\lambda}v+u\sh_{\lambda}pv+\lambda u\sh_{\lambda}v),\\
du\sh_{\lambda}dv&:=\frac{1}{\lambda}\left[d(u\sh_{\lambda}v)-u\sh_{\lambda}dv-du\sh_{\lambda}v\right],\\
du\sh_{\lambda}pv:=pv\sh_{\lambda}du&:=d(u\sh_{\lambda}pv)-u\sh_{\lambda}v-\lambda du\sh_{\lambda}v.
\end{align*}

\begin{lemma}\cite{Castillo13b} \label{lem:shalgebra}
The pair $(\cH,m^{(\lambda)}_\sh)$ is a commutative, associative and unital algebra, and $(\cH_0,m^{(\lambda)}_\sh)$ is a subalgebra of $(\cH,m^{(\lambda)}_\sh)$.
\end{lemma}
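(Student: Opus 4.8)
The plan is to verify associativity and commutativity of $m^{(\lambda)}_\sh$ on $\cH$ by a standard but careful induction on word length, and then to check that $\cH_0$ is closed under the product, so that the subalgebra claim follows immediately. Commutativity is the easier half: the defining recursions are visibly symmetric in the two arguments in the cases $yu\sh_\lambda v = u\sh_\lambda yv$ and $du\sh_\lambda pv = pv\sh_\lambda du$, and symmetric by construction in the $pu\sh_\lambda pv$ and $du\sh_\lambda dv$ cases. So I would first record $u\sh_\lambda v = v\sh_\lambda u$ by a short induction on the total length of $u$ and $v$, treating the leading-letter cases and using the inductive hypothesis to flip the shorter words inside each recursion.

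The main work is associativity, $(u\sh_\lambda v)\sh_\lambda w = u\sh_\lambda(v\sh_\lambda w)$, which I would prove by induction on the sum of the lengths of $u,v,w$. Here the subtlety is that the alphabet has three interacting letters $p,d,y$ subject to the cancellation rule $pd=dp=\be$, so one does not merely split on the leading letters of three words independently; one must keep track of how a leading $p$ on one factor can cancel against a leading $d$ on another once the recursions produce adjacent $p$'s and $d$'s. Concretely I would fix the leading letters $(a,b,c)$ of $(u,v,w)$, expand both sides one step using the defining relations, and match the resulting terms using the inductive hypothesis. The cases where all three leading letters lie in $\{p,d\}$ are the delicate ones, because the $du\sh_\lambda dv$ rule carries the factor $1/\lambda$ and the $du\sh_\lambda pv$ rule produces a length-decreasing term $u\sh_\lambda v$; these must be handled together so that the $1/\lambda$ factors and the cancellations $pd=\be$ are consistent across the two ways of associating.

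The hard part will be precisely this bookkeeping in the mixed $p/d$ cases: one has to check that expanding $(du\sh_\lambda pv)\sh_\lambda w$ and $du\sh_\lambda(pv\sh_\lambda w)$ down to words of strictly smaller total length yields identical linear combinations, and the appearance of $1/\lambda$ together with the subtraction terms makes sign- and coefficient-tracking error-prone. Since this is exactly the content of the cited reference \cite{Castillo13b}, I would either reproduce that verification case-by-case or, more economically, appeal to the Rota--Baxter interpretation: if $p$ and $d$ are realized as a weight-$\lambda$ Rota--Baxter operator and its inverse acting on a suitable function space, then $m^{(\lambda)}_\sh$ coincides with the induced shuffle product, whose associativity and commutativity are automatic from the Rota--Baxter axiom. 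This reduces the combinatorial induction to the single identity expressing the Rota--Baxter relation.

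Finally, for the subalgebra statement, I would observe that both arguments of any recursion on the right-hand sides preserve the property of ending in the letter $y$: in each defining rule the outermost letter of the output is inherited from a word that still ends in $y$, and no rule can strip a trailing $y$. Hence $m^{(\lambda)}_\sh(W_0\otimes W_0)\subset\cH_0$, and since $\be\in\cH_0$ is the unit, $(\cH_0,m^{(\lambda)}_\sh)$ is a unital subalgebra of $(\cH,m^{(\lambda)}_\sh)$, completing the proof.
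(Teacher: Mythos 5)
The paper itself gives no proof of this lemma: it is stated with a citation to \cite{Castillo13b}, and the surrounding text only records that the construction there rests on a Rota--Baxter operator picture, with $p$ realized as an invertible Rota--Baxter map and $d$ as its inverse. So there is no in-paper argument to match your induction against; your fallback route --- deferring associativity to the Rota--Baxter realization --- is precisely the methodology of the cited source, and your treatment of commutativity and of the closure of $\cH_0$ is correct as stated. One small caution on the closure argument: the recursions only ever strip leading letters and prefix new ones, so trailing letters are indeed preserved; the only degenerate output $d\sh_\lambda d=-\frac{1}{\lambda}d$ landing outside $\cH_0$ arises from inputs already outside $\cH_0$, so your argument goes through. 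Whether $\be\in\cH_0$, i.e.\ whether the subalgebra is unital, is a matter of convention; the lemma only claims a subalgebra.

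Two points in your plan need more care before it is a proof. First, before any induction you must check that $m^{(\lambda)}_\sh$ is well defined on $\cH$ at all: the words carry the rewriting rule $pd=dp=\be$, so the four recursions act on reduced words, and one should verify both that the leading-letter case analysis is exhaustive and mutually consistent --- e.g.\ $yu'\sh_\lambda yv'$ can be expanded via either factor and the two answers agree --- and that every rule strictly decreases total length, so the recursion terminates; compare the analogous consistency check the paper performs for $\oDelta(pd)$ and $\oDelta(dp)$ in Section 5.1. Second, the claim that associativity is \emph{automatic} from the Rota--Baxter interpretation is too quick: realizing $p$ as a weight-$\lambda$ Rota--Baxter operator, $d$ as its inverse, and $y$ as multiplication by $t/(1-t)$ in an associative function algebra only shows that the image of $(u\sh_\lambda v)\sh_\lambda w - u\sh_\lambda(v\sh_\lambda w)$ vanishes after evaluation. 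To conclude that this element vanishes in $\cH$ itself you need the realization to be injective, or a freeness argument for the relevant Rota--Baxter-type structure; that is a genuine verification, not a formality. Absent it, you are back to the mixed $p/d$ bookkeeping you rightly identify as the hard part --- which is fine, but as written your text is a plan for that computation rather than the computation.
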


Next we define a  \textsl{unital infinitesimal bialgebra} on $\cH$. This will yield a coproduct $\oDelta$, which does not depend on $\lambda$. Then we will show that $(\cH, m^{(\lambda)}_\sh,\oDelta)$ is a proper bialgebra.

Recall that a unital infinitesimal bialgebra is a triple $(\cA,m_\cdot,\oDelta)$ where $(\cA,m_\cdot)$ is a unital associative algebra, and $\oDelta : \cA \to \cA \otimes \cA$ is coproduct with the following compatibility relation \cite{Loday06}:
%
% Jean-Louis Loday and Mar\'{i}a Ronco. On the structure of cofree Hopf algebras. J. reine angew. Math., 592:123--155, 2006.
%
\begin{equation}\label{eq:inf}
\oDelta(u \cdot v)=(u\otimes \be)\oDelta(v)+\oDelta(u)(\be\otimes v)-u\otimes v.
\end{equation}
Our ansatz is as follows:
\begin{align}
 \oDelta(\be)	&:=\be\otimes\be,\label{eq:ansatz1}\\
 \oDelta(p)		&:=p\otimes\be+\be\otimes p,\label{eq:ansatzp}\\
 \oDelta(y)		&:=y\otimes\be,\label{eq:ansatzy} \\
 \oDelta(d)		&:=0.\label{eq:ansatzd} 
\end{align}

This is motivated by the idea that $\oDelta$ is supposed to extend in some sense the deconcatenation coproduct to $W$. Here the letters $p$ and $py$ correspond to the letters $x_0$ and $x_1$, respectively, which are primitives with respect to the ususal deconcatenation coproduct.  

\begin{proposition}
The triple $(\cH, m^{(\lambda)}_\sh,\oDelta)$ is a bialgebra, where $\oDelta\colon  \cH \to \cH\otimes \cH$ is defined through \eqref{eq:inf} together with the initial values \eqref{eq:ansatz1}, \eqref{eq:ansatzp}, \eqref{eq:ansatzy} and \eqref{eq:ansatzd}. 
\end{proposition}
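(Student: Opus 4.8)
The plan is to verify that the triple $(\cH, m^{(\lambda)}_\sh,\oDelta)$ satisfies all the bialgebra axioms, given that Lemma \ref{lem:shalgebra} already establishes that $(\cH, m^{(\lambda)}_\sh)$ is a commutative associative unital algebra. The coproduct $\oDelta$ is defined recursively via the unital infinitesimal compatibility \eqref{eq:inf} together with the initial values \eqref{eq:ansatz1}--\eqref{eq:ansatzd}; the very first thing I would do is confirm that this recursion is \emph{well defined} on $\cH$, meaning it respects the defining relation $pd=dp=\be$. Since $\oDelta(d)=0$ and $\oDelta(\be)=\be\otimes\be$, one must check that applying \eqref{eq:inf} to the product $p\cdot d$ (and $d\cdot p$) reproduces $\oDelta(\be)$. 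Using \eqref{eq:inf} with $u=p$, $v=d$ gives $(p\otimes\be)\oDelta(d)+\oDelta(p)(\be\otimes d)-p\otimes d$, and after substituting $\oDelta(d)=0$ and $\oDelta(p)=p\otimes\be+\be\otimes p$ one finds it collapses to $\be\otimes\be$ (using $pd=\be$ in the second tensor factor); the symmetric computation handles $dp$. This consistency check is the conceptually delicate point, and I would carry it out first.

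Next I would establish that $\oDelta$ is coassociative, i.e.\ $(\oDelta\otimes\Id)\oDelta = (\Id\otimes\oDelta)\oDelta$. The natural strategy is induction on word length, using the fact that $\oDelta$ is determined on products through \eqref{eq:inf}. Concretely, any word factors as a product of the generators $p,d,y$, so it suffices to verify coassociativity on the generators (immediate from \eqref{eq:ansatzp}--\eqref{eq:ansatzd}, since each generator's coproduct is built from primitives or zero) and then propagate it across the infinitesimal product rule. I would similarly verify the counit property: defining $\oepsilon$ by $\oepsilon(\be)=1$ and $\oepsilon(w)=0$ for nonempty $w$, one checks $(\oepsilon\otimes\Id)\oDelta=\Id=(\Id\otimes\oepsilon)\oDelta$ on generators and extends inductively.

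The central and most laborious step is to prove the genuine \emph{bialgebra} compatibility, namely that $\oDelta$ is an algebra morphism for $m^{(\lambda)}_\sh$:
\begin{equation*}
\oDelta(u\sh_\lambda v)=\oDelta(u)\,(\sh_\lambda\otimes\sh_\lambda)\,\oDelta(v),
\end{equation*}
where the right-hand product is the componentwise $\sh_\lambda$ on $\cH\otimes\cH$. I expect this to be the main obstacle, because $\oDelta$ is defined through the \emph{infinitesimal} relation \eqref{eq:inf} rather than as a multiplicative map, so the two compatibilities must be reconciled. The plan is a double induction on the lengths of $u$ and $v$, peeling off leading letters according to the four recursive cases defining $\sh_\lambda$ (the $y$-case, the $pu\sh_\lambda pv$ case, the $du\sh_\lambda dv$ case, and the mixed $du\sh_\lambda pv$ case). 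In each case I would expand both sides: the left side via \eqref{eq:inf} applied to the leading-letter factorization, and the right side via the shuffle recursion combined with the inductive hypothesis. The terms involving $d$ are the trickiest, since $\oDelta(d)=0$ forces cancellations that must exactly match the $\tfrac1\lambda$-weighted and mixed shuffle rules; I anticipate that verifying these cancellations, and confirming that the explicit $-u\otimes v$ correction term in \eqref{eq:inf} is correctly absorbed, is where the real work lies. Once multiplicativity, coassociativity, and the counit axioms are all in hand, the bialgebra claim follows.
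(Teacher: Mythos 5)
Your overall architecture matches the paper's proof: check well-definedness of the recursion, prove coassociativity by induction peeling the leading letter (cases $yu$, $pu$, $du$), then prove multiplicativity $\oDelta(u\sh_\lambda v)=\oDelta(u)\sh_\lambda\oDelta(v)$ by induction on total weight through the four recursive cases of $\sh_\lambda$, with the $d$-cases carrying the delicate cancellations (in the paper these hinge on the identities $d\shl d=-\tfrac1\lambda d$ and $d\shl du=-\tfrac1\lambda\, d\shl u$, and on $(d\otimes u')\shl(p\otimes\be)\oDelta(v')=-\lambda\,(d\otimes u')\shl\oDelta(v')$). So the plan is essentially the paper's, but two points need repair.

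First, well-definedness is not exhausted by checking compatibility with $pd=dp=\be$: since \eqref{eq:inf} prescribes $\oDelta(w)$ for \emph{every} factorization $w=uv$, you must also show the outcome is independent of the splitting point. The paper does this by a separate induction on weight, comparing the splittings $w_1\cdot(w_2w_3)$ and $(w_1w_2)\cdot w_3$. (Equivalently, if you define $\oDelta$ by leading-letter recursion so that well-definedness on words is automatic, you must then \emph{prove} that \eqref{eq:inf} holds for arbitrary splittings --- the same work, and it is genuinely used later, e.g.\ to compute $\oDelta(py^k)$.) Second, your counit step would fail outright: with $\oDelta(y)=y\otimes\be$ one gets $(\overline{\epsilon}\otimes\Id)\oDelta(y)=\overline{\epsilon}(y)\,\be$, which is a scalar multiple of $\be$ and can never equal $y$; worse, $\oDelta(d)=0$ kills both counit identities for $d$. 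Hence $(\cH,\oDelta)$ admits no counit at all, and the proposition must be read as asserting a bialgebra without counit, i.e.\ a coassociative coproduct that is an algebra morphism for $m^{(\lambda)}_\sh$ --- which is exactly, and only, what the paper proves. Dropping the counit claim and adding the splitting-independence induction turns your proposal into the paper's argument.
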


\begin{proof}
First we have to prove that $\oDelta$ is well-defined. The initial values \eqref{eq:ansatz1}, \eqref{eq:ansatzp} and \eqref{eq:ansatzd} are consistent with the convention $dp =pd=\be$ since
\begin{align*}
 \oDelta(dp) & = (d\otimes \be)(\be \otimes p + p \otimes \be) -d\otimes p = \be \otimes \be, \\
 \oDelta(pd) & = (\be \otimes p + p \otimes \be)(\be\otimes d) - p\otimes d  = \be \otimes \be.
\end{align*}
Next we show that for a given word $w$ relation \eqref{eq:inf} is independent of the splitting point, i.e., for $w=w_1w_2 = w_1'w_2'$ we have 
\begin{align*}
 \oDelta(w_1w_2) = \oDelta(w_1'w_2').
\end{align*}
This is done by induction on the weight $\wt(w)$. The base case is trivial. Now assume that $w = w_1w_2w_3$ with $\wt(w_1)$, $\wt(w_2)$, $\wt (w_3)$, $\wt(w_2w_3)$,  and $\wt(w_1w_2)$ all being smaller than $\wt(w)$. Splitting $w$ between $w_1$ and $w_2$ we observe 
\begin{align*}
 \oDelta(w_1\cdot w_2w_3) =& ~(w_1\otimes \be)\oDelta(w_2w_3) + \oDelta(w_1)(\be\otimes w_2w_3)- w_1\otimes w_2w_3 \\
 =&~ (w_1w_2\otimes \be) \oDelta(w_3) + (w_1\otimes \be)\oDelta(w_2)(\be \otimes w_3) - w_1w_2\otimes w_3\\
 & + \oDelta(w_1)(\be\otimes w_2w_3)- w_1\otimes w_2w_3 
\end{align*}
and splitting $w$ between $w_2$ and $w_3$ leads to  
\begin{align*}
  \oDelta(w_1w_2\cdot w_3) =& ~(w_1w_2\otimes \be)\oDelta(w_3) + \oDelta(w_1w_2)(\be\otimes w_3)- w_1w_2\otimes w_3 \\
 =&~ (w_1w_2\otimes \be) \oDelta(w_3) + (w_1\otimes \be)\oDelta(w_2)(\be \otimes w_3) + \oDelta(w_1)(\be\otimes w_2w_3) \\
 &  - w_1\otimes w_2 w_3    - w_1w_2\otimes w_3, 
\end{align*}
which coincide by induction hypothesis. 

In the next step we prove coassociativity of $\oDelta$ by induction on the weight $\wt(w)$. The base case is trivial. Now we have to distinguish three cases: 
\begin{itemize}
 \item Case 1: $w=yu$
 \begin{align*}
  ((\oDelta\otimes \Id) \circ \oDelta)(yu) 
  & = (\oDelta\otimes \Id)(y\otimes \be)\oDelta(u)  = \sum_{(u)}\oDelta(yu_1) \otimes u_2\\
  & = (y\otimes \be\otimes \be) (\oDelta\otimes \Id)\oDelta(u) = (y\otimes \be\otimes \be) (\Id \otimes \oDelta)\oDelta(u) \\
  & =  (\Id \otimes \oDelta) (y\otimes \be)\oDelta(u)=((\oDelta\otimes \Id) \circ \oDelta)(yu) .
 \end{align*}
  \item Case 2: $w=pu$\\
  We have
 \begin{align*}
  ((\oDelta\otimes \Id) \circ \oDelta)(pu) &= (\oDelta\otimes \Id) \left[(p\otimes \be)\oDelta(u)+\be\otimes pu\right] \\
  & = \sum_{(u)}\oDelta(pu_1)\otimes u_2 + \be\otimes \be\otimes pu \\
  & = \sum_{(u)} \left[(p\otimes \be)\oDelta(u_1) +\be \otimes pu_1 \right]\otimes u_2+ \be\otimes \be\otimes pu \\
  & = (p\otimes \be \otimes \be) (\oDelta \otimes \Id) \oDelta(u) + \be \otimes ((p \otimes \be)\oDelta(u))+ \be\otimes \be\otimes pu 
 \end{align*}
 and
 \begin{align*}
  ((\Id \otimes \oDelta) \circ \oDelta)(pu) &= (\Id\otimes \oDelta) \left[(p\otimes \be)\oDelta(u)+\be\otimes pu\right] \\
  & = \sum_{(u)}pu_1\otimes \oDelta(u_2) + \be\otimes \oDelta(pu) \\
  & = (p\otimes \be \otimes \be) (\Id \otimes \oDelta) \oDelta(u) + \be \otimes ((p\otimes \be)\oDelta(u))+\be\otimes \be \otimes pu,
 \end{align*} 
 which coincide by induction hypothesis. 
 \item Case 3: $w=du$
 \begin{align*}
  ((\oDelta\otimes \Id) \circ \oDelta)(du) & = (\oDelta\otimes \Id) \left[(d\otimes \be)\oDelta(u)-d\otimes u  \right] \\
  & = \sum_{(u)}\oDelta(du_1)\otimes u_2  = \sum_{(u)}\left[(d\otimes \be) \oDelta(u_1)\otimes u_2  -d\otimes u_1\otimes u_2 \right] \\
  & = (d\otimes \be \otimes \be) (\oDelta \otimes  \Id)\oDelta(u)- d\otimes \oDelta(u) \\
  & =(d\otimes \be \otimes \be) (\Id \otimes \oDelta)\oDelta(u)- d\otimes \oDelta(u) \\
  & = (\Id \otimes \oDelta) \left[(d\otimes \be)\oDelta(u)-d\otimes u  \right] \\
  & = ((\Id \otimes \oDelta) \circ \oDelta)(du).
 \end{align*}
\end{itemize}

We show $\oDelta(u)\sh_\lambda\oDelta(v)=\oDelta(u\sh_\lambda v)$ by induction on the sum of weights $s=\hbox{wt}(u)+\hbox{wt}(v)$, the case $s=1$ being trivial.
\begin{itemize}
\item Case 1: $u=yu'$. Then we have:
\begin{align*}
\oDelta(yu'\sh_\lambda v)
& = \oDelta\big(y(u'\sh_\lambda v)\big)\\
& = \oDelta(y) \big(\be\otimes (u'\sh_\lambda v)\big)+(y\otimes\be)\oDelta(u'\sh_\lambda v)-y\otimes(u'\sh_\lambda v)\\
& = (y\otimes\be)\oDelta(u'\sh_\lambda v)\\
& = (y\otimes\be)(\oDelta(u')\sh_\lambda \oDelta(v))\\
& = ((y\otimes\be) \oDelta(u'))\sh_\lambda \oDelta(v)\\
& = \oDelta(yu')\sh_\lambda\oDelta (v).
\end{align*}
\item Case 2: $u=du'$ and $v=dv'$. Two subcases occur:
 We compute:
\begin{align*}
\oDelta(du'\sh_{\lambda}dv')
&=\frac{1}{\lambda}\oDelta\big(d(u'\sh_{\lambda}v')-du'\sh_{\lambda}v'-u'\sh_{\lambda}dv'\big)\\
&=\frac{1}{\lambda}\Big[\oDelta(d)\big((\be\otimes(u'\sh_{\lambda}v')\big)+(d\otimes\be)\oDelta(u'\sh_{\lambda}v')-d\otimes(u'\sh_{\lambda}v')\\
   &\hskip 8mm -\oDelta(du')\sh_{\lambda}\oDelta(v')-\oDelta(u')\sh_\lambda \oDelta (dv')\Big]\\
   &=\frac{1}{\lambda}\Big[(d\otimes \be)(\oDelta(u')\sh_{\lambda}\oDelta(v'))-d\otimes(u'\sh_{\lambda}v')\\
   &-\big((d\otimes\be)\oDelta(u')-d\otimes u'\big)\sh_{\lambda}\oDelta(v')-\oDelta(u')\sh_\lambda \big((d\otimes\be)\oDelta(v')-d\otimes v'\big)\Big]\\
   &=\frac{1}{\lambda}\Big[(d\otimes \be)\oDelta(u')\sh_{\lambda}\oDelta(v')+\oDelta(u')\sh_\lambda(d\otimes\be)\oDelta(v')+\lambda(d\otimes \be)\oDelta(u')\sh_\lambda (d\otimes\be)\oDelta(v')\\
  &\hskip 8mm -d\otimes(u'\sh_{\lambda}v')\\
  &\hskip 8mm-\big((d\otimes\be)\oDelta(u')-d\otimes u'\big)\sh_{\lambda}\oDelta(v')-\oDelta(u')\sh_\lambda \big((d\otimes\be)\oDelta(v')-d\otimes v'\big)\Big]\\
  &=\frac{1}{\lambda}\big[(d\otimes u')\shl \oDelta(v')+\oDelta(u')\shl (d\otimes v')-d\otimes (u'\shl v')\big]\\
  &\hskip 8mm +(d\otimes \be)\oDelta(u')\sh_\lambda (d\otimes\be)\oDelta(v').
\end{align*}
On the other hand,
\begin{align*}
\oDelta(du')\shl \oDelta(dv')&=\big((d\otimes \be)\oDelta(u')-d\otimes u')\big)\shl \big((d\otimes \be)\oDelta(v')-d\otimes v')\big)\\
&=(d\otimes \be)\oDelta(u')\shl (d\otimes \be)\oDelta(v')-(d\otimes \be)\oDelta(u')\shl (d\otimes v')\\
&\hskip 8mm -(d\otimes u')\shl (d\otimes \be)\oDelta(v') +(d\otimes u')\shl (d\otimes v').
\end{align*}
In view of $d\shl d=-\frac 1\lambda d$ and, more generally, $d\shl du=-\frac 1\lambda d\shl u$ for any $u$, we get:
\begin{align*}
&\oDelta(du'\sh_{\lambda}dv')-\oDelta(du')\shl \oDelta(dv')\\
&=(d\otimes u')\shl \big(\frac 1\lambda -d\otimes\be)\oDelta(v')\big)+\big(\frac 1\lambda -d\otimes\be)\oDelta(u')\big)\shl (d\otimes v')\\
&= \sum_{(v')}(\frac 1\lambda d\shl v'_1-d\shl dv'_1)\otimes (u'\shl v'_2)
	+\sum_{(u')}(\frac 1\lambda d\shl u'_1-d\shl du'_1)\otimes (u'_2\shl v')\\
&= 0.
\end{align*}
\item Case 3: $u=pu'$ and $v=pv'$. We observe
\begin{align*}
 \oDelta(pu')\shl \oDelta(pv') & =  \oDelta(p(u'\shl pv' + p u'\shl v' + \lambda u'\shl v')) \\
 & = (p\otimes \be)\left[ \oDelta(u')\shl \oDelta(pv') + \oDelta(pu') \shl \oDelta(v')+\lambda \oDelta(u')\shl \oDelta(v') \right] \\
 & + (\be\otimes p) (\be \otimes (u'\shl pv' + p u'\shl v' + \lambda u'\shl v'))\\
 & = (p\otimes \be)\left[ \oDelta(u')\shl (p\otimes \be)\oDelta(v')+ \oDelta(u')\shl (\be\otimes pv')\right. \\
 &+ \left.((p\otimes \be) \oDelta(u')) \shl \oDelta(v') + (\be \otimes pu') \shl \oDelta(v')+\lambda \oDelta(u')\shl \oDelta(v') \right] \\
 & + \be \otimes (pu'\shl pv')
\end{align*}
and 
\begin{align*}
 \oDelta(pu') \shl \oDelta(pv') & = ((p\otimes \be)\oDelta(u')+\be \otimes pu')\shl ((p\otimes \be)\Delta(v') + \be\otimes pv')\\
 & = (p\otimes \be) \oDelta(u') \shl (p\otimes \be)\oDelta(v') + \be \otimes (pu'\shl pv')\\
 & + (p\otimes \be) \left[ \oDelta(u')\shl(\be \otimes pv') + (\be \otimes pu')\shl \oDelta(v') \right]\\
 & = (p\otimes \be)\left[ \oDelta(u') \shl (p\otimes \be)\oDelta(v')+ (p\otimes \be) \oDelta(u') \shl \oDelta(v') +\lambda \oDelta(u') \shl \oDelta(v')\right] \\
 & + (p\otimes \be) \left[ \oDelta(u')\shl(\be \otimes pv') + (\be \otimes pu')\shl \oDelta(v') \right] + \be \otimes (pu'\shl pv')
\end{align*}
\item Case 4: $u=du'$ and $v=pv'$. We obtain
\begin{align*}
 \oDelta(du'\shl pv') & = \oDelta(d(u'\shl pv')-u'\shl v' - \lambda du'\shl v') \\
 & = (d\otimes \be) (\oDelta(u')\shl \oDelta(pv')) - \oDelta(u')\shl \oDelta(v') - \lambda \oDelta(du')\shl \oDelta(v') - d\otimes (u'\shl pv')\\
 & = (d\otimes \be) (\oDelta(u')\shl (p\otimes \be)\oDelta(v')) + (d\otimes \be) (\oDelta(u')\shl (\be \otimes pv')) \\
 & - \oDelta(u')\shl \oDelta(v') - \lambda \oDelta(du')\shl \oDelta(v') - d\otimes (u'\shl pv')\\
  & = (d\otimes \be) (\oDelta(u')\shl (p\otimes \be)\oDelta(v')) + (d\otimes \be) (\oDelta(u')\shl (\be \otimes pv'))  - \oDelta(u')\shl \oDelta(v') \\
  & - \lambda (d\otimes \be)\oDelta(u')\shl \oDelta(v')+ \lambda (d\otimes u')\shl \oDelta(v')- d\otimes (u'\shl pv')
\end{align*}
and 
\begin{align*}
 \oDelta(du')\shl\oDelta(pv') & = \left[(d\otimes \be)\oDelta(u')-d\otimes u'\right]\shl \left[(p\otimes \be)\oDelta(v')+\be \otimes pv'\right] \\
 & = (d\otimes \be)\oDelta(u')\shl (p\otimes \be)\oDelta(v') - (d\otimes \be)\oDelta(u') \shl (\be \otimes pv')\\
 & - (d\otimes u')\shl (p\otimes \be)\oDelta(v') - d\otimes (u'\shl pv')\\
 & = (d\otimes \be) (\oDelta(u')\shl(p\otimes \be)\oDelta(v')) - \oDelta(u')\shl\oDelta(v') - \lambda (d\otimes \be)\oDelta(u')\shl \oDelta(v') \\
 & - (d\otimes \be) (\oDelta(u')\shl (\be\otimes pv')) - (d\otimes u')\shl (p\otimes \be)\oDelta(v') - d\otimes (u'\shl pv').
\end{align*}
Using $(d\otimes u')\shl(p\otimes \be)\oDelta(v')=-\lambda (d\otimes u')\shl \oDelta(v')$ we conclude the proof. 
\end{itemize}
\end{proof}

Let $(\fH^1,m_\ast^{(-1)},\Delta)$ be the quasi-shuffle Hopf algebra of weight $-1$. Then $\fH^0$ is a left coideal of $(\fH^1,\Delta)$ and Corollary \ref{cor:maindual} implies that $\fH^0$ is a left coideal of $(\fH^{(-1)},\Delta_\square)$.  

\begin{theorem}\label{theo:infinitesimal}
Let $(\fH^1,m_\ast^{(-1)},\Delta)$ be the quasi-shuffle Hopf algebra of weight $-1$. The coproduct map $\Delta_{\square,op}$ restricted to the left coideal $\fH^0$ arising from the duality construction with respect to $\widetilde\tau$, coincides with the coproduct $\oDelta$ restricted to $\fH^0$ defined by the infinitesimal bialgebra construction, i.e., the following diagram commutes:  
  \begin{align*}
 \xymatrix{
 \fH^0 \ar[dd]_{\widetilde\tau} \ar[r]^-{\oDelta} & \fH^{(-1)} \otimes \fH^0 \\
  &   \fH^0 \otimes \fH^{(-1)} \ar[u]_s \\
  \fH^0  \ar[r]^-{\Delta} & \fH^0 \otimes \fH^1 \ar[u]_{\widetilde\tau \otimes \widetilde\tau}}   
\end{align*}  
\end{theorem}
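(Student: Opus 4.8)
The plan is to unwind the diagram into a single identity of linear maps and then to check it on basis words by producing an explicit closed form for each of the two coproducts. Reading the diagram, commutativity is equivalent to
\[
\oDelta\big|_{\fH^0} = s\circ(\widetilde\tau\otimes\widetilde\tau)\circ\Delta\circ\widetilde\tau\big|_{\fH^0}.
\]
The right-hand side is exactly $\Delta_{\square,op}$ restricted to $\fH^0$: applying Proposition \ref{prop:maindual} with the isomorphism $T=\widetilde\tau\colon\fH^{(-1)}\to\fH^1$ (so that $T^{-1}=\widetilde\tau$) gives the transferred coproduct $\Delta_\square=(\widetilde\tau\otimes\widetilde\tau)\circ\Delta\circ\widetilde\tau$, and $\Delta_{\square,op}=s\circ\Delta_\square$. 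By Corollary \ref{cor:maindual} both sides map $\fH^0$ into $\fH^{(-1)}\otimes\fH^0$, so it suffices to compare them on a basis word $w=z_{k_1}\cdots z_{k_n}=p^{k_1}y\cdots p^{k_n}y$ with $k_1\ge 1$.

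First I would produce a closed form for $\oDelta$ on $\fH^0$. Since no letter $d$ occurs in words of $\fH^0$, specializing the infinitesimal relation \eqref{eq:inf} to a single leading letter yields the two recursions $\oDelta(pw')=(p\otimes\be)\oDelta(w')+\be\otimes pw'$ and $\oDelta(yw')=(y\otimes\be)\oDelta(w')$ already encountered in the coassociativity verification. Unrolling these from left to right gives
\[
\oDelta(w)=w\otimes\be+\sum_{i:\,a_i=p} a_1\cdots a_{i-1}\otimes a_i\cdots a_L,
\]
where $w=a_1\cdots a_L$; equivalently, $\oDelta(w)$ is the sum of $w'\otimes w''$ over all factorizations $w=w'w''$ in which $w''$ is empty or begins with $p$. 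One checks directly that the left factors lie in $\fH^{(-1)}$ and the right factors in $\fH^0$, matching the stated codomain.

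Next I would evaluate the right-hand side on the same word. The antiautomorphism $\widetilde\tau$ reverses a word and exchanges $p\leftrightarrow y$; the deconcatenation $\Delta$ then splits $\widetilde\tau(w)$ at its $z_k$-block boundaries, that is, immediately after each occurrence of $y$ (together with the two trivial cuts). For a cut $\widetilde\tau(w)=uv$, applying $\widetilde\tau\otimes\widetilde\tau$ and the flip $s$ produces $\widetilde\tau(v)\otimes\widetilde\tau(u)$, and since $\widetilde\tau$ is an involutive antiautomorphism this equals $w'\otimes w''$ for the factorization $w=w'w''$ with $w'=\widetilde\tau(v)$ and $w''=\widetilde\tau(u)$. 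The assignment $(u,v)\mapsto(\widetilde\tau(v),\widetilde\tau(u))$ is a bijection of factorizations, and under it a block boundary of $\widetilde\tau(w)$ (a cut after a $y$, or an endpoint) corresponds precisely to a cut of $w$ immediately before a $p$ (or at the very end), because each $y$ of $\widetilde\tau(w)$ is the $\widetilde\tau$-image of a $p$ of $w$. Hence the two index sets coincide and the summands agree term by term, which is the desired identity. As a bookkeeping check, the cut $u=\be$ yields the term $w\otimes\be$ and $v=\be$ yields $\be\otimes w$, and the fact that $\widetilde\tau$ sends the $\fH^1$-factor $u$ (ending in $y$) to the $\fH^{(-1)}$-factor $w''$ (beginning with $p$) is what places the output in $\fH^{(-1)}\otimes\fH^0$.

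The step I expect to be the main obstacle is precisely this translation of cutting conventions: deconcatenation cuts $\widetilde\tau(w)$ \emph{between $z_k$-blocks}, i.e.\ after the letters $y$, whereas $\oDelta$ cuts $w$ \emph{before the letters $p$}. The role of the flip $s$ together with the reversing, letter-exchanging involution $\widetilde\tau$ is exactly to convert one family of cuts into the other, and the care lies in verifying this correspondence rigorously, including the endpoint terms and the exchange $\widetilde\tau(\fH^1)=\fH^{(-1)}$ that aligns the tensor factors. An alternative, should the bijection be awkward to phrase cleanly, is to avoid closed forms entirely and instead prove the identity by induction on the length of $w$, establishing the same leading-letter recursions for $\Delta_{\square,op}\big|_{\fH^0}$ directly from the weight $-1$ quasi-shuffle coproduct and matching them against those for $\oDelta$.
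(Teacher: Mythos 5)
Your proof is correct, but it takes a genuinely different route from the paper's. The paper argues by induction on word length: after the base case $\Delta_{\square,op}(p)=\be\otimes p+p\otimes\be=\oDelta(p)$, it peels off either a leading letter $p$ (when the remaining word again starts with $p$) or a leading block $py^{k}$, using the recursion $\oDelta(pw)=(p\otimes\be)\oDelta(w)+\be\otimes pw$ together with the auxiliary fact $\oDelta(py^{k})=\be\otimes py^{k}+py^{k}\otimes\be$, and transports each step through $s\circ(\widetilde\tau\otimes\widetilde\tau)\circ\Delta\circ\widetilde\tau$ by the antiautomorphism property, e.g.\ $(p\otimes\be)\,s(\widetilde\tau\otimes\widetilde\tau)\Delta\bigl(\widetilde\tau(w)\bigr)=s(\widetilde\tau\otimes\widetilde\tau)\bigl[\Delta\bigl(\widetilde\tau(w)\bigr)(\be\otimes y)\bigr]$. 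You instead solve the recursions in closed form: on $d$-free words, $\oDelta(w)=\sum w'\otimes w''$ over all factorizations $w=w'w''$ with $w''$ empty or beginning with $p$ (your derivation of this from $\oDelta(pw')=(p\otimes\be)\oDelta(w')+\be\otimes pw'$ and $\oDelta(yw')=(y\otimes\be)\oDelta(w')$ is correct, and these are exactly the identities the paper uses inside its induction), while deconcatenation of $\widetilde\tau(w)$ cuts at block boundaries, i.e.\ after each $y$; the flip $s$ and the reversal $w=\widetilde\tau(v)\widetilde\tau(u)$ then carry those cuts bijectively onto precisely the same factorizations, a cut after $y$ becoming a cut before $p=\widetilde\tau(y)$, with the endpoint terms $w\otimes\be$ and $\be\otimes w$ matching as you note. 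Your bijection argument is complete, including the exchange $\widetilde\tau(\fH^{1})=\fH^{(-1)}$ that aligns the tensor factors (and the codomain bookkeeping is in any case visible directly from the two closed forms, so the appeal to Corollary \ref{cor:maindual} is only a convenience). What your approach buys is a transparent combinatorial identification of both coproducts with one explicit ``cut before every $p$'' formula, making the roles of $s$ and $\widetilde\tau$ conceptually clear; the paper's induction is shorter to write and never needs the cut bijection, but leaves this common closed form implicit. The fallback you sketch at the end is essentially the paper's actual proof.
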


\begin{proof}
 We prove the theorem by the length of the words. We have 
 \begin{align*}
  \Delta_{\square,op}(p) & = s(\widetilde\tau\otimes \widetilde\tau) \Delta \widetilde\tau(p) = s(\widetilde\tau\otimes \widetilde\tau) \Delta (y) \\
  & = s(\widetilde\tau\otimes \widetilde\tau) (\be \otimes y + y\otimes \be) = \be \otimes p + p\otimes \be\\
  & = \oDelta(p).
 \end{align*}
In the inductive step we distinguish two cases. 
\begin{itemize}
 \item First case: $w'=pw$ where $w$ starts with a $p$. We observe
\begin{align*} 
 \oDelta(pw)  
 & = (p\otimes \be) \oDelta(w) + \oDelta(p)(\be\otimes w)-p\otimes w \\
 & = (p\otimes \be) \Delta_{\square,op}(w) + \be \otimes pw \\
 & = (p\otimes \be) s(\widetilde\tau\otimes \widetilde\tau) \Delta(\widetilde\tau(w)) + \be \otimes pw \\
 & =  s(\widetilde\tau\otimes \widetilde\tau)  \left[\Delta(\widetilde\tau(w))(\be \otimes y) + \widetilde\tau(pw) \otimes \be  \right]\\
 & =  s(\widetilde\tau\otimes \widetilde\tau) \Delta(\widetilde\tau(pw))\\
 & = \Delta_{\square,op}(pw).
\end{align*}
\item Second case: $w'=p\tilde{w}$ where $\tilde{w}=y^{k}w$ ($k\in \bN$) and $w$ starts with $p$ or is the empty word.
\begin{align*}
 \oDelta(py^kw) 
 & = (py^k\otimes \be) \oDelta(w) + \oDelta(py^k)(\be\otimes w)-py^k\otimes w \\
 & = (py^k\otimes \be) \Delta_{\square,op}(w) + \be \otimes py^kw \\ 
 & = (py^k\otimes \be) s(\widetilde\tau\otimes \widetilde\tau) \Delta(\widetilde\tau(w)) + \be \otimes py^kw \\
 & =  s(\widetilde\tau\otimes \widetilde\tau)  \left[\Delta(\widetilde\tau(w))(\be \otimes p^ky) + \widetilde\tau(py^kw) \otimes \be  \right]\\
 & =  s(\widetilde\tau\otimes \widetilde\tau) \Delta(\widetilde\tau(py^kw))\\
 & = \Delta_{\square,op}(py^kw),
\end{align*}
using the fact that $\oDelta(py^k) = \be \otimes py^k + py^k \otimes \be$ ($k\in \bN$) which is easily verified by induction. 
\end{itemize}
\end{proof}

%%%%%%%%%%%%%%%%%%%%%%%%%%%%%%%%%%%%%%%

\subsection{The Schlesinger--Zudilin $q$-multiple zeta star vales}
\label{ssect:SZ-MZSVs}

As we have shown in Theorem \ref{theo:OOZdual}, the quasi-shuffle Hopf algebra of weight $-1$, which corresponds to the quasi-shuffle product of Schlesinger--Zudilin $q$-MZSVs, induces a shuffle product for the OOZ-model. In this section we show that we can do the reverse, i.e., we can use the quasi-shuffle-like product of the OOZ-model to define a shuffle product for Schlesinger--Zudilin $q$-MZSVs. 

In the first step, following \cite{Castillo13b}, we introduce the quasi-shuffle product of the OOZ-model. We define the operator $T\colon \fH^0 \to \fH^1$ by 
\begin{align*}
 T(z_mw):=z_mw - z_{m-1}w
\end{align*}
for any $w\in \fH^0$ and $m\in \bN$. Then we define the product $m_\ast^{\operatorname{OOZ}}\colon \fH^0\otimes \fH^0 \to \fH^0$, $m_\ast^{\operatorname{OOZ}}(u\otimes v)=:u\ast_{\operatorname{OOZ}} v$, by
\begin{enumerate}[(QS1)]
 \item $\be \ast_{\operatorname{OOZ}} w := w \ast_{\operatorname{OOZ}} \be := w$; 
 \item $z_m u \ast_{\operatorname{OOZ}} z_n v:= z_m(u\ast_1 T(z_nv)) + z_n(T(z_mu)\ast_1 v) + (z_{m+n}-z_{m+n-1})(u\ast_1v)$
\end{enumerate}
for any $w\in \fH^0$, $u,v\in \fH^1$ and $m,n\in \bN$, where $\ast_1$ was defined in Subsection \ref{ssect:SZ-qMZVs}.

\begin{proposition}[\cite{Castillo13b}]
 The map $\z^{\operatorname{OOZ}}\colon (\fH^0,m_\ast^{\operatorname{OOZ}})\to \bQ[[q]]$ is a character. 
\end{proposition}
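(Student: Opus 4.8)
The plan is to prove multiplicativity of $\z^{\operatorname{OOZ}}$ for $m_\ast^{\operatorname{OOZ}}$ directly from the sum representation \eqref{eq:defOOZ}, reading off the recursion (QS1)--(QS2) as the combinatorics of multiplying two nested sums. Write $f_k(m):=(1-q^m)^{-k}$ and $g_k(m):=q^m(1-q^m)^{-k}$, so that for a word $z_{k_1}\cdots z_{k_n}\in\fH^0$ one has
\[
 \z^{\operatorname{OOZ}}(z_{k_1}\cdots z_{k_n})=\sum_{m_1>\cdots>m_n>0}g_{k_1}(m_1)f_{k_2}(m_2)\cdots f_{k_n}(m_n),
\]
i.e.\ the global maximal index carries the factor $q^{m_1}$ while every \emph{internal} letter carries no power of $q$. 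The two elementary identities
\[
 g_k=f_k-f_{k-1},\qquad g_k(m)g_l(m)=\frac{q^{2m}}{(1-q^m)^{k+l}}=g_{k+l}(m)-g_{k+l-1}(m)
\]
are the whole engine of the proof: the first expresses the operator $T(z_m\,\cdot\,)=z_m\,\cdot-z_{m-1}\,\cdot$ as the passage of a leading letter from an internal ($f$-type) to a maximal ($g$-type) weighting, and the second is precisely the correction term $z_{m+n}-z_{m+n-1}$.

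Next I would expand the product $\z^{\operatorname{OOZ}}(z_mu)\,\z^{\operatorname{OOZ}}(z_nv)$ and split the resulting double sum according to the three mutually exclusive positions of the two leading summation variables $m_1$ and $m_1'$: either $m_1>m_1'$, or $m_1<m_1'$, or $m_1=m_1'$. In the first case $z_m$ keeps the global maximum (hence the $g$-weighting, realised by the unchanged leading letter $z_m$), while the former leading letter $z_n$ of the second factor is \emph{demoted} to an internal position; since an internal $z_n$ is evaluated with $f_n$ but the product still carries $g_n(m_1')$, the discrepancy $g_n=f_n-f_{n-1}$ is restored exactly by replacing $z_nv$ with $T(z_nv)$, and all remaining (internal, $f$-type) letters combine by the ordinary weight-one stuffle $\ast_1$; this yields the term $z_m(u\ast_1 T(z_nv))$. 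The symmetric case $m_1<m_1'$ gives $z_n(T(z_mu)\ast_1 v)$, and the coincidence $m_1=m_1'$ produces, via the second identity above, the diagonal term $(z_{m+n}-z_{m+n-1})(u\ast_1 v)$. Summing the three contributions reproduces exactly the right-hand side of (QS2).

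Fixing the global maximum reduces the surviving computation to the classical stuffle identity for the purely internal sums $\sum\prod f_{k_i}(m_i)$, which are characters for $\ast_1$ because $f_k(m)f_l(m)=f_{k+l}(m)$; thus no genuine recursion in $\ast_{\operatorname{OOZ}}$ itself is needed, the recursion (QS2) being already expressed through $\ast_1$. The only real obstacle is the bookkeeping of the single $q$-numerator: one must verify that exactly one letter — the sub-maximal one — is demoted and corrected by $T$, and that $T(z_j\,\cdot\,)=z_j\,\cdot-z_{j-1}\,\cdot$ propagates consistently through the $\ast_1$-merges, using that $z_j-z_{j-1}$ always evaluates internally to $g_j$. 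An alternative, more structural route is available: by Proposition \ref{tranfer-bz-ooz} we have $\z^{\operatorname{OOZ}}=\z^{\operatorname{SZ}}\circ V$ with $\z^{\operatorname{SZ}}$ a character for $\ast_1$ \cite{Singer15}, so it suffices to establish the purely word-algebraic identity $V(u\ast_{\operatorname{OOZ}}v)=V(u)\ast_1 V(v)$; this again reduces, by induction, to the binomial identities underlying $V$ together with $g_k=f_k-f_{k-1}$, and recasts the statement as the assertion that $(\fH^0,m_\ast^{\operatorname{OOZ}})$ is the transfer of $(\fH^0,m_\ast^{(1)})$ along $V$ in the sense of Proposition \ref{prop:maindual}.
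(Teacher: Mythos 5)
Your proof is correct. Note that the paper you are being compared against does not actually prove this proposition: it is imported by citation from \cite{Castillo13b}, where the product $\ast_{\operatorname{OOZ}}$ is derived from the iterated-sum representation in essentially the way you reconstruct it. Your argument is sound and self-contained: writing $f_k(m)=(1-q^m)^{-k}$, $g_k(m)=q^m(1-q^m)^{-k}$, the splitting of the double sum by $m_1>m_1'$, $m_1<m_1'$, $m_1=m_1'$ together with the two identities $g_k=f_k-f_{k-1}$ and $g_kg_l=g_{k+l}-g_{k+l-1}$ reproduces exactly the three terms of (QS2). One remark that makes your ``only real obstacle'' disappear: since $g_n=f_n-f_{n-1}$ holds \emph{pointwise} in the summation variable, the entire multiple sum with a $g$-weighted internal letter is, by multilinearity, the difference of two multiple sums with pure $f$-weighting; each of those is a classical stuffle sum (as $f_kf_l=f_{k+l}$), so the demoted head of the second factor is replaced by $T(z_nv)=z_nv-z_{n-1}v$ with no separate verification of how $T$ ``propagates through the $\ast_1$-merges'' needed -- the consistency $g_nf_k=g_{n+k}=f_{n+k}-f_{n+k-1}$ is automatic. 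Two small points worth recording for completeness: the demoted letter always satisfies $n\ge 1$ (it heads a word of $\fH^0$), so $T$ is defined on it, and all words produced stay in $\fH^0$ since internal letters $z_0$ are permitted in the $\fH$-alphabet; and the formal-power-series convergence in $\bQ[[q]]$ is guaranteed by the numerator $q^{m_1}$ attached to the maximal index even when internal exponents vanish. Your alternative route via $\z^{\operatorname{OOZ}}=\z^{\operatorname{SZ}}\circ V$ and the word identity $V(u\ast_{\operatorname{OOZ}}v)=V(u)\ast_1 V(v)$ is plausible and attractive -- it would exhibit $(\fH^0,m_\ast^{\operatorname{OOZ}})$ as a transfer in the sense of Proposition \ref{prop:maindual} -- but as written it is only a sketch; the sum-splitting argument is the one that carries the proof.
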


Following \cite{Muneta09} we review the shuffle product of classical multiple zeta star values. The shuffle product $\sh_\star \colon \bQ\langle x_0,x_1 \rangle \otimes \bQ\langle x_0,x_1 \rangle\to \bQ\langle x_0,x_1 \rangle $ is defined iteratively by 
\begin{enumerate}[(i)]
 \item $\be \sh_\star u = u\sh_\star \be = u$ 
 \item $a u \sh_\star bv = a (u \sh_\star bv) + b(a u \sh_\star v) - \delta(u)\tau(a)bv - \delta(v)\tau(b)au$
\end{enumerate}
for any words $u,v\in \{x_0,x_1\}^\ast$ and letters $a,b\in \{x_0,x_1\}$, where $\delta$ is defined by 
\begin{align*}
 \delta(w):=\begin{cases}
             1 & w=\mathbf{1}, \\
             0 & \text{otherwise.}
            \end{cases}
\end{align*}

We apply Proposition \ref{prop:maindual} to the algebra $(\fH^0,m_\ast^{\operatorname{OOZ}})$ because for the coalgebra structure we have to go over to the completion of $\fH^0$ since the map $T$ is not bijective. 

We have the following result: 
\begin{theorem}\label{theo:SZSdual}
The product $m_\square^{\operatorname{OOZ}}\colon \fH^0\otimes \fH^0 \to \fH^0$ induced by Proposition \ref{prop:maindual} defines a shuffle product for the Schlesinger--Zudilin $q$-MZSVs, which coincides with $\sh_\star$ modulo terms of lower weight.  
\end{theorem}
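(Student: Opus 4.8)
The plan is to apply the algebra part of Proposition \ref{prop:maindual} with the transfer isomorphism $\widetilde\tau$, and then to extract the top-weight component of the resulting product. Note first that $(\fH^0,m_\ast^{\operatorname{OOZ}})$ is only an algebra, not a Hopf algebra: a compatible coproduct would force one to invert the operator $T\colon\fH^0\to\fH^1$, $T(z_mw)=z_mw-z_{m-1}w$, which is not bijective, so one would have to pass to a completion. Hence I take $\cH=(\fH^0,m_\ast^{\operatorname{OOZ}})$ with character $\z^{\operatorname{OOZ}}$ and transfer map $\widetilde\tau\colon\fH^0\to\fH^0$ (an involution, in particular a linear automorphism), and transfer only the multiplication. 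This produces $m_\square^{\operatorname{OOZ}}=\widetilde\tau\circ m_\ast^{\operatorname{OOZ}}\circ(\widetilde\tau\otimes\widetilde\tau)$, a commutative associative unital product, with transferred character $\z^{\operatorname{OOZ}}\circ\widetilde\tau$. Because $\widetilde\tau$ is involutive, Theorem \ref{theo:dualOOZ} gives $\z^{\operatorname{OOZ}}\circ\widetilde\tau=\z^{\operatorname{SZ},\star}$, so $\z^{\operatorname{SZ},\star}$ is multiplicative for $m_\square^{\operatorname{OOZ}}$; this is the assertion that $m_\square^{\operatorname{OOZ}}$ is a shuffle product for the Schlesinger--Zudilin $q$-MZSVs.

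The comparison with $\sh_\star$ modulo lower weight rests on one bookkeeping remark. Write $n_y(w)$ for the number of letters $y$ in a word $w$; then $\wt(w)$ is the number of letters $p$, and since $\widetilde\tau$ swaps $p$ and $y$ we have $\wt(\widetilde\tau w)=n_y(w)$. Therefore the top-$\wt$ part of $m_\square^{\operatorname{OOZ}}(u\otimes v)$ is $\widetilde\tau$ applied to the maximal-$n_y$ part of $\widetilde\tau(u)\ast_{\operatorname{OOZ}}\widetilde\tau(v)$, the maximal value of $n_y$ being $n_y(\widetilde\tau u)+n_y(\widetilde\tau v)=\wt(u)+\wt(v)$. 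Expanding $T$ in the definition of $\ast_{\operatorname{OOZ}}$ and recombining the three surviving terms via (QS2) gives
\begin{align*}
z_mu\ast_{\operatorname{OOZ}}z_nv=z_mu\ast_1z_nv-\big[z_m(u\ast_1z_{n-1}v)+z_n(z_{m-1}u\ast_1v)+z_{m+n-1}(u\ast_1v)\big].
\end{align*}
Every block $z_k$ contains exactly one $y$, so the blockwise merging terms inside $\ast_1$ (those of shape $z_{a+b}(\cdots)$) together with $z_{m+n-1}(u\ast_1v)$ strictly lower $n_y$, and thus fall to strictly lower $\wt$ after $\widetilde\tau$; by contrast the replacements $z_n\mapsto z_n-z_{n-1}$ issuing from $T$ preserve $n_y$ and survive at top weight. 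Hence the top-$\wt$ part of $\ast_{\operatorname{OOZ}}$ is the blockwise shuffle decorated by the $T$-corrections.

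I would then prove the identity by induction on $\wt(u)+\wt(v)$. Using the leading-block recursion of $\ast_{\operatorname{OOZ}}$ just displayed, I expand $\widetilde\tau(u)\ast_{\operatorname{OOZ}}\widetilde\tau(v)$ by the leading blocks of $\widetilde\tau(u)$ and $\widetilde\tau(v)$, exactly as in the proofs of Theorems \ref{theo:SZdual} and \ref{theo:OOZdual}; because $\widetilde\tau$ reverses words, these leading blocks are the images $\widetilde\tau(a),\widetilde\tau(b)$ of the trailing letters $a,b\in\{x_0,x_1\}$ of $u,v$, and the outer $\widetilde\tau$ turns the whole expansion into the corresponding one-sided recursion of the commutative product $\sh_\star$. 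The blockwise-shuffle part transfers to the plain shuffle terms $a(u'\sh_\star bv')+b(au'\sh_\star v')$, whereas the $T$-corrections $z_n\mapsto z_n-z_{n-1}$ transfer precisely to the star corrections $-\delta(u')\tau(a)bv'-\delta(v')\tau(b)au'$ of (i)--(ii); the merging terms land in strictly lower weight. As a consistency check the mechanism already yields $m_\square^{\operatorname{OOZ}}(z_2\otimes z_2)=2z_2z_2+4z_3z_1-6z_4$ modulo lower weight, equal to $x_0x_1\sh_\star x_0x_1$, with the summand $-6z_4$ being exactly the transferred $T$-correction.

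The main obstacle is this last matching. One must check that the correction $z_{n-1}$, after transfer, produces the correct transposed letter $\tau(a)$, and---most delicately---that it switches on exactly when the complementary factor is the empty word, thereby reproducing the Kronecker factors $\delta(u')$, $\delta(v')$ of $\sh_\star$. One must also control the reversal: the recursion for $\ast_{\operatorname{OOZ}}$ and the operator $T$ act on leading blocks, while $\sh_\star$ is peeled from one end, so the two one-sided recursions have to be reconciled using commutativity. Finally one should confirm that the excluded index $z_{-1}$ never occurs (the blocks in play are $z_0=\widetilde\tau(x_0)$ and $z_1=\widetilde\tau(x_1)$) and that every merging contribution is genuinely of strictly lower $\wt$, so that it may be absorbed into the error term.
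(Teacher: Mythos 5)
Your first half reproduces the paper's argument almost exactly: the paper likewise applies Proposition \ref{prop:maindual} only to the algebra $(\fH^0,m_\ast^{\operatorname{OOZ}})$ (with the same remark that the coalgebra side would force a completion of $\fH^0$, since $T$ is not bijective), obtains the character statement from $\z^{\operatorname{OOZ}}\circ\widetilde\tau=\z^{\operatorname{SZ},\star}$, and its central computation is your expansion
\begin{align*}
z_mu\ast_{\operatorname{OOZ}}z_nv=z_mu\ast_1z_nv-z_m(u\ast_1z_{n-1}v)-z_n(z_{m-1}u\ast_1v)-z_{m+n-1}(u\ast_1v),
\end{align*}
carried out there in the packaged form
\begin{align*}
up^ay^b \square_{\operatorname{OOZ}} vp^cy^d = up^ay^b \square_{1} vp^cy^d - \left(up^{a-1} \square_{1}vp^{c}y^{d-1} \right)py^b - \left(up^{a}y^{b-1} \square_{1}vp^{c-1} \right)py^d - \left(up^{a-1} \square_{1}vp^{c-1} \right)py^{b+d-1},
\end{align*}
after which Theorem \ref{theo:SZdual} replaces $\square_1$ by $\sh_1$, which is $\sh$ modulo lower weight. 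Your $n_y$-versus-$\wt$ bookkeeping is correct (the merged terms, including the last one above, drop to strictly lower weight, while the $T$-corrections survive), and your test value $2z_2z_2+4z_3z_1-6z_4$ checks out.

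However, what you defer as ``the main obstacle'' is the actual second half of the proof, and the matching you propose points in the wrong direction. The paper closes the argument with Lemma \ref{lem:altstarsh},
\begin{align*}
ua \ssh vb = ua \sh vb - (u \sh v \tau(b))a - (u\tau(a) \sh v)b,
\end{align*}
proved by its own induction: it rewrites $\sh_\star$ through the plain shuffle with corrections that peel the \emph{trailing} letters and occur \emph{unconditionally} --- no Kronecker factors at all. That is precisely the shape delivered by the $T$-corrections, which, as you yourself observe, are present for all $u,v$; consequently there is nothing that ``switches on exactly when the complementary factor is the empty word'', and matching the transferred corrections term by term against the $\delta$-gated front recursion (i)--(ii) fails. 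Your own example exhibits the mismatch: on the transferred side $-6z_4$ comes from top-level unconditional $T$-corrections, whereas in the recursion (i)--(ii) it is generated at inner steps where a factor has become empty (in $x_1\ssh x_1$ and $x_1\ssh x_0x_1$); the two recursions agree only in total, and the bridge between them is exactly the lemma. Once the lemma is in hand the comparison is direct --- for $u'=ux_0^{a-1}x_1$ and $v'=vx_0^{c-1}x_1$ the two pairs of correction terms coincide literally modulo the lower-weight merged term --- and your planned induction on $\wt(u)+\wt(v)$, together with the worries about producing $\tau(a)$ and reconciling leading-block versus one-end peeling, dissolve. So the concrete gap is the absence of (a proof of) the trailing-letter formula for $\sh_\star$: your proposal establishes the transferred expansion but not the identification of its top-weight part with $\sh_\star$.
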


\begin{proof}[Proof of Theorem \ref{theo:SZSdual}]
We perform two steps. First we need an alternative description of the shuffle product $\shuffle_\star$ in terms of $\shuffle$:

\begin{lemma}\label{lem:altstarsh}
 The shuffle product $\sh_\star \colon \bQ\langle x_0,x_1 \rangle \otimes \bQ\langle x_0,x_1 \rangle\to \bQ\langle x_0,x_1 \rangle $ can be calculated in terms of the ordinary shuffle product $\sh$ via the formula
 \begin{align*}
   ua \sh_\star vb = ua \sh vb - (u \sh v \tau(b))a - (u\tau(a) \sh v)b.
 \end{align*}
\end{lemma}

\begin{proof}
 We define $ua \tsh vb := ua \sh vb - (u \sh v \tau(b))a - (u\tau(a) \sh v)b$ and prove that $ua \tsh vb = ua \sh_\star vb$ by induction over the sum of the weights of $ua$ and $vb$. For the base case we observe 
 \begin{align*}
  y\ssh y & = 2y^2 - 2xy = y\tsh y,\\
  y\ssh x & = yx+xy - x^2 -y^2 = y\tsh x, \\
  x\ssh x & = 2x^2 -2 yx = x\tsh x.  
 \end{align*}
For the inductive step we have to consider two cases: 
In the first case we obtain
\begin{align*}
 a_1ua_2 \ssh b = 
  & ~ a_1(ua_2 \ssh b) + b a_1ua_2 - \tau(b)a_1ua_2 \\
 =& ~ a_1(ua_2\sh b) -a_1(u\sh \tau(b))a_2 - a_1u \tau(a_2)b  +ba_1ua_2 -\tau(b)a_1ua_2 \\
 =& ~ a_1ua_2\sh b - \left[ a_1(u\sh \tau(b))+ \tau(b) a_1 u  \right]a_2 - a_1u\tau(a_2)b \\
 =& ~ a_1ua_2\sh b - (a_1u\tsh \tau(b))a_2 - a_1 u \tau(a_2)b. 
\end{align*}
Secondly, we observe that
\begin{align*}
 a_1 u a_2 \ssh b_1 u b_2 =& ~ a_1( u a_2 \ssh b_1 u b_2) + b_1 (a_1 u a_2 \ssh u b_2)  \\ & -\delta(ua_2)\tau(a_1) b_1 v b_2 - \delta(vb_2) \tau(v_1)a_1 u a_2 \\
 =& ~ a_1( u a_2 \ssh b_1 u b_2) + b_1 (a_1 u a_2 \ssh u b_2) \\
 =& ~ a_1(ua_2 \sh b_1v_1b_2) - a_1(u \sh b_1v\tau(b_2))a_2 - a_1(u\tau(a_2)\sh b_1v)b_2 \\
  & + b_1(a_1ua_2 \sh vb_2) - b_1(a_1 u \sh v\tau(b_2))a_2 - b_1(a_1u\tau(a_2)\sh v)b_2 \\
 =& ~ a_1ua_2 \sh b_1 v_1 b_2 - \left[ a_1(u\tau(a_2)\sh b_1v) +b_1(a_1u\tau(a_2)\sh v) \right]b_2 \\
  &  -\left[ a_1(u \sh b_1v\tau(b_2))+b_1(a_1 u \sh v\tau(b_2)) \right]a_2 \\
 =& ~ a_1ua_2 \sh b_1 v_1 b_2 - (a_1u\tau(a_2)\sh b_1v)b_2 - (a_1u \sh b_1v\tau(b_2))a_2 \\
 =& ~ a_1ua_2 \tsh b_1 v_1 b_2,  
\end{align*}
which concludes the proof. 
\end{proof}

Now an explicit calculation shows that
 \begin{align*}
   &~up^ay^b \square_{\operatorname{OOZ}} vp^cy^d \\
 = &~ \widetilde\tau \left(z_b z_0^{a-1} \widetilde\tau (u) \ast_{\operatorname{OOZ}}  z_d z_0^{c-1} \widetilde\tau (v) \right) \\
 = &~\widetilde\tau  \left(z_b z_0^{a-1} \widetilde\tau (u) \ast_1 z_d z_0^{c-1} \widetilde\tau (v) - z_b( z_0^{a-1} \widetilde\tau (u) \ast_1 z_{d-1} z_0^{c-1} \widetilde\tau (v)) \right. \\
 & - \left. z_d(z_{b-1} z_0^{a-1} \widetilde\tau (u) \ast_1 z_0^{c-1} \widetilde\tau (v)) -z_{b+d-1}( z_0^{a-1} \widetilde\tau (u) \ast_1 z_0^{c-1} \widetilde\tau (v))\right) \\
 = &~ up^ay^b \square_{1} vp^cy^d - \left(up^{a-1} \square_{1}vp^{c}y^{d-1} \right)py^b - \left(up^{a}y^{b-1} \square_{1}vp^{c-1} \right)py^d - \left(up^{a-1} \square_{1}vp^{c-1} \right)py^{b+d-1}.
\end{align*}
By Theorem \ref{theo:SZdual} the product $\square_1$ coincides with $\sh_1$ which is the same as $\sh$ modulo lower weight terms. This concludes the proof. 
\end{proof}

If we compare the definition of $\ssh$ and Lemma \ref{lem:altstarsh} we see that the product $\ssh$ can be calculated in terms of $\ssh$ itself or in terms of the ordinary shuffle product $\sh$. Duality implies that this fact is also true for the quasi-shuffle product of the OOZ-model. 

\begin{theorem}
 Let $Y:=\{z_k\colon k\in \bZ \}$. The quasi-shuffle-like product $\ast_{\operatorname{OOZ}}\colon \bQ\langle Y\rangle \otimes \bQ\langle Y\rangle \to \bQ\langle Y\rangle$ is given by 
$\be \ast_{\operatorname{OOZ}} w = w \ast_{\operatorname{OOZ}} \be = w$ and 
\begin{align*}
 uz_m \ast_{\operatorname{OOZ}} vz_n =&~ (u \ast_{\operatorname{OOZ}} vz_n)z_m + (uz_m \ast_{\operatorname{OOZ}} v)z_n + (u\ast_{\operatorname{OOZ}} v)z_{n+m} - \delta(v)uz_mz_{n-1} \\ 
 &- \delta(u)vz_nz_{m-1} -\delta(v)uz_{n+m-1} - \delta(u)vz_{n+m-1} + \delta(u)\delta(v)z_{n+m-1}. 
\end{align*}
\end{theorem}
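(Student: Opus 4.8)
The plan is to obtain this explicit formula for $\ast_{\operatorname{OOZ}}$ as a direct consequence of the duality established in Theorem \ref{theo:SZSdual}, by transporting the alternative description of $\sh_\star$ given in Lemma \ref{lem:altstarsh} across the involution $\widetilde\tau$. The key point is that Theorem \ref{theo:SZSdual} realizes the product $m_\square^{\operatorname{OOZ}}$ as the image under $\widetilde\tau$ of $\ast_{\operatorname{OOZ}}$ via $m_\square^{\operatorname{OOZ}}=\widetilde\tau\circ \ast_{\operatorname{OOZ}}\circ(\widetilde\tau\otimes\widetilde\tau)$, so that equivalently $\ast_{\operatorname{OOZ}}=\widetilde\tau\circ m_\square^{\operatorname{OOZ}}\circ(\widetilde\tau\otimes\widetilde\tau)$ since $\widetilde\tau$ is involutive. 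The idea is that $m_\square^{\operatorname{OOZ}}$ is (modulo lower weight terms) the star-shuffle $\sh_\star$, whose recursive structure is already known, and applying $\widetilde\tau$ simply translates the letter-by-letter recursion on $\{x_0,x_1\}$ into the block recursion on the letters $z_k$.

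First I would record the action of $\widetilde\tau$ on the generators of interest: since $\widetilde\tau$ is the antiautomorphism exchanging $p$ and $y$, a block $z_k=p^ky$ is sent to $yp^k$, and more usefully $\widetilde\tau$ turns a word ending in a block $z_m$ into a word beginning with a corresponding factor. This is precisely the mechanism used in the computation at the end of the proof of Theorem \ref{theo:SZSdual}, where $up^ay^b$ is rewritten through $\widetilde\tau$ into $z_bz_0^{a-1}\widetilde\tau(u)$. Next I would apply $\widetilde\tau$ to the defining recursion of $\sh_\star$ from Lemma \ref{lem:altstarsh}, namely $ua\sh_\star vb=ua\sh vb-(u\sh v\tau(b))a-(u\tau(a)\sh v)b$, reading off how the correction terms $\delta$ and $\tau$ behave. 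The crucial observation is that $\widetilde\tau$ exchanges prefixes and suffixes, so the recursion on the \emph{last} letters $a,b$ of the $\sh_\star$ picture becomes a recursion on the last \emph{blocks} $z_m,z_n$ in the $\ast_{\operatorname{OOZ}}$ picture, and the indicator $\delta$ of emptiness is preserved because $\widetilde\tau$ fixes the empty word $\be$.

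The main body of the argument would then be a bookkeeping computation: starting from the original recursive definition (QS1)--(QS2) of $\ast_{\operatorname{OOZ}}$ involving the operator $T(z_mw)=z_mw-z_{m-1}w$ and the weight-one product $\ast_1$, I would expand $uz_m\ast_{\operatorname{OOZ}}vz_n$ and collect terms according to the shift $z_{n+m}\mapsto z_{n+m}-z_{n+m-1}$ induced by $T$, tracking carefully the boundary contributions that arise when $u$ or $v$ is empty. The terms $z_{m+n}-z_{m+n-1}$ together with the shifts coming from $T$ applied to the first blocks produce exactly the combination $(u\ast_{\operatorname{OOZ}}v)z_{n+m}$ plus the correction terms $-\delta(v)uz_mz_{n-1}-\delta(u)vz_nz_{m-1}-\delta(v)uz_{n+m-1}-\delta(u)vz_{n+m-1}+\delta(u)\delta(v)z_{n+m-1}$. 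The $\delta$-terms are genuinely boundary phenomena, appearing only when a factor degenerates to the empty word, which is why they are invisible in the generic recursion.

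The hard part will be matching the decrement-by-one corrections precisely, in particular disentangling the overlapping contributions of $T$ acting on $z_m$, on $z_n$, and on $z_{m+n}$, and verifying that the several $z_{n+m-1}$ corrections carry the stated signs and $\delta$-coefficients including the doubly-degenerate term $\delta(u)\delta(v)z_{n+m-1}$. A clean way to control this is to argue entirely on the dual side: since Theorem \ref{theo:SZSdual} and Lemma \ref{lem:altstarsh} already encode the full structure of $\sh_\star$, I would prefer to deduce the formula formally by applying $\widetilde\tau$ to the identity of Lemma \ref{lem:altstarsh} and using $\widetilde\tau(\tau(\cdot))$-type compatibilities, reserving the direct expansion only as a consistency check on the lowest-weight cases. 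Either route reduces to the same finite verification, and the duality guarantees that no additional terms can appear.
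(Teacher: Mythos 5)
Your preferred route has a genuine gap. Theorem \ref{theo:SZSdual} identifies the dual product $m_\square^{\operatorname{OOZ}}$ with $\sh_\star$ only \emph{modulo terms of lower weight}; it is not an exact identity. Transporting Lemma \ref{lem:altstarsh} through $\widetilde\tau$ can therefore at best pin down the weight-leading part $(u \ast_{\operatorname{OOZ}} vz_n)z_m + (uz_m \ast_{\operatorname{OOZ}} v)z_n + (u\ast_{\operatorname{OOZ}} v)z_{n+m}$ of the asserted recursion, whereas the whole content of the theorem lies in the correction terms $-\delta(v)uz_mz_{n-1}-\delta(u)vz_nz_{m-1}-\delta(v)uz_{n+m-1}-\delta(u)vz_{n+m-1}+\delta(u)\delta(v)z_{n+m-1}$, which are precisely of lower weight and hence invisible to that duality statement. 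Indeed, the classical $\sh_\star$ of Lemma \ref{lem:altstarsh} is weight-homogeneous on $\{x_0,x_1\}$-words, while the recursion to be proved is not, so an exact transport of that lemma cannot produce it; your closing claim that ``the duality guarantees that no additional terms can appear'' is false as stated. To make your argument exact you would need a self-contained, exact recursion for $m_\square^{\operatorname{OOZ}}$, and the only exact identity available (inside the proof of Theorem \ref{theo:SZSdual}) expresses $\square_{\operatorname{OOZ}}$ in terms of $\square_1=\sh_1$; dualizing that back merely reproduces the defining relation (QS2) of $\ast_{\operatorname{OOZ}}$ in terms of $\ast_1$ and $T$, so the duality route is circular rather than a proof of the new recursion.

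There is a second obstruction: the theorem is stated on $\bQ\langle Y\rangle$ with $Y=\{z_k\colon k\in\bZ\}$, but the entire duality apparatus ($\widetilde\tau$, the identification $z_k=p^ky$, Theorems \ref{theo:dualOOZ}, \ref{theo:SZdual} and \ref{theo:SZSdual}) lives on words in $\fH^0$, i.e., non-negative arguments, and the paper explicitly remarks after Theorem \ref{theo:OOZdual} that the duality argument does not extend to negative or mixed-sign arguments. The paper's own proof avoids duality altogether: it defines $\tast$ by the right-hand side of the stated formula and shows $w\tast w'=w\ast_{\operatorname{OOZ}}w'$ by induction on total length, expanding both sides through the weight-one quasi-shuffle $\ast_1$ and the shift operator $T(z_mw)=z_mw-z_{m-1}w$ --- a purely formal index computation, valid for all $k\in\bZ$, with the length-two base case $z_m\tast z_n$ already producing the $\delta$-corrections. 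Your ``bookkeeping'' paragraph gestures at exactly this computation; carried out in full (two inductive cases, as in the paper) it yields a complete proof, but the duality shortcut you prefer cannot replace it.
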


\begin{proof}
 We define 
 \begin{align*}
  uz_m \tast vz_n :=&~ (u \ast_{\operatorname{OOZ}} vz_n)z_m + (uz_m \ast_{\operatorname{OOZ}} v)z_n + (u\ast_{\operatorname{OOZ}} v)z_{n+m} - \delta(v)uz_mz_{n-1} \\ 
 &- \delta(u)vz_nz_{m-1} -\delta(v)uz_{n+m-1} - \delta(u)vz_{n+m-1} + \delta(u)\delta(v)z_{n+m-1}
 \end{align*}
and prove $w\tast w'  =w\ast_{\operatorname{OOZ}}w'$ by induction on the sum of the length of $w$ and $w'$. The length one case is trivial. For length two we observe 
\begin{align*}
 z_m\tast z_n = z_m z_n + z_nz_m + z_{n+m} -z_mz_{n-1} - z_nz_{m-1} - z_{n+m-1} = z_m \ast_{\operatorname{OOZ}} z_n.
\end{align*}
Now we consider two cases. First of all, we have 
\begin{align*}
 z_{a_1} u z_{a_2} \tast z_b 
 =& ~ (z_{a_1}u \tast z_b)z_{a_2} + z_{a_1}uz_{a_2}z_b + z_{a_1}u z_{a_2+b} - z_{a_1}uz_{a_2}z_{b-1} - z_{a_1}uz_{a_2+b-1} \\
 =& ~ z_{a_1}(u\ast_1 z_b)z_{a_2} + z_bz_{a_1}uz_{a_2} + z_{a_1+b}z_{a_2} \\
  & - z_{a_1}(u\ast_1z_{b-1})z_{a_2} - z_bz_{a_1-1}uz_{a_2} - z_{a_1+b-1}uz_{a_2} \\
  & + z_{a_1}uz_{a_2}z_b + z_{a_1} uz_{a_2+b} - z_{a_1}uz_{a_2}z_{b-1} - z_{a_1}uz_{a_2+b-1} \\
 =&~ z_{a_1}(uz_{a_2}\ast_1 z_b) + z_bz_{a_1} uz_{a_2} + z_{a_1+b}uz_{a_2} \\
 & - z_{a_1}(uz_{a_2}\ast_1 z_{b-1}) - z_bz_{a_1-1} uz_{a_2} - z_{a_1+b-1}uz_{a_2} \\
 =&~  z_{a_1} u z_{a_2} \ast_{\operatorname{OOZ}} z_b. 
\end{align*}
Furthermore we observe 
\begin{align*}
 z_{a_1}uz_{a_2} \tast z_{b_1}vz_{b_2} 
 =&~ (z_{a_1}u\tast z_{b_1}uz_{b_2})z_{a_2} + (z_{a_1}uz_{a_2} \tast z_{b_1}v)z_{b_2} + (z_{a_1}u\tast z_{b_1}v)z_{z_{a_2+b_2}} \\
 =&~ z_{a_1}(u\ast_1 z_{b_1}vz_{b_2})z_{a_2} + z_{b_1} (z_{a_1}u\ast_1 vz_{b_1})z_{a_2} + z_{a_1+b_1} (u \ast_1 vz_{b_2})z_{a_2} \\
 & -z_{a_1}(u\ast_1 z_{b_1-1}vz_{b_2})z_{a_2} - z_{b_1} (z_{a_1-1}u\ast_1 vz_{b_1})z_{a_2} - z_{a_1+b_1-1} (u \ast_1 vz_{b_2})z_{a_2} \\
 & +z_{a_1}(uz_{a_2}\ast_1 z_{b_1}v)z_{b_2} + z_{b_1}(z_{a_1}uz_{a_2} \ast_1 v )z_{b_2} + z_{a_1+b_1} (uz_{a_1}\ast_1 v)z_{b_2} \\
 & -z_{a_1}(uz_{a_2}\ast_1 z_{b_1-1}v)z_{b_2} - z_{b_1}(z_{a_1-1}uz_{a_2} \ast_1 v )z_{b_2} - z_{a_1+b_1-1} (uz_{a_1}\ast_1 v)z_{b_2} \\
 & +z_{a_1}(u\ast_1 z_{b_1}v)z_{a_2+b_2} + z_{b_1}(z_{a_1}u\ast_1 v)z_{a_2+b_2} + z_{a_1+b_1}(u\ast v)z_{a_2+b_2} \\
 & -z_{a_1}(u\ast_1 z_{b_1-1}v)z_{a_2+b_2} - z_{b_1}(z_{a_1-1}u\ast_1 v)z_{a_2+b_2} - z_{a_1+b_1-1}(u\ast v)z_{a_2+b_2} \\
 =&~ z_{a_1}(uz_{a_2}\ast_1 z_{b_1}vz_{b_2}) + z_{b_1}(z_{a_1}uz_{a_2}\ast_1 vz_{b_2}) + z_{a_1+b_1}(uz_{a_2}\ast_1vz_{b_2}) \\
 &~ -z_{a_1}(uz_{a_2}\ast_1 z_{b_1-1}vz_{b_2}) - z_{b_1}(z_{a_1-1}uz_{a_2}\ast_1 vz_{b_2}) - z_{a_1+b_1-1}(uz_{a_2}\ast_1vz_{b_2})\\
 =&~  z_{a_1}uz_{a_2} \ast_{\operatorname{OOZ}} z_{b_1}vz_{b_2}, 
\end{align*}
which concludes the proof.
\end{proof}

%%%%%%%%%%%%%%%%%%%%%%%%%%%%%%%%%%%
%%%%%%%%%%%%%%%%%%%%%%%%%%%%%%%%%%%
%%%%%%%%%%%%%%%%%%%%%%%%%%%%%%%%%%%
\ignore{
\appendix

\section{Proof of Proposition \ref{prop:maindual}}
\label{Appendix:ProofThmHopf}

\begin{proof}[Proof of Theorem \ref{prop:maindual}.] We have to verify the Hopf algebra axioms \cite{Kassel95,Sweedler69}. Note that at several places we are using the natural isomorphisms $k\otimes \cH \cong \cH  \cong  \cH \otimes k$.

 \begin{itemize}
  \item Associativity: 
  \begin{align*}
      m_\square\circ (m_\square\otimes \Id) 
      	& 	= T^{-1} \circ m \circ (m \otimes \Id) \circ (T \otimes T \otimes T) \\
     	&	= T^{-1} \circ m \circ (\Id \otimes  m) \circ (T \otimes T \otimes T) 
     		= m_\square\circ (\Id \otimes m_\square)   
  \end{align*}
  
  \item Unitary:
  \begin{align*}
       m_\square \circ (\eta_\square \otimes \Id) 
       &  	= T^{-1} \circ m \circ (\eta \otimes T) \\
       & 	= T^{-1} \circ m \circ (\eta \otimes \Id) \circ (\Id_k \otimes T)
        		= T^{-1} \circ T = \Id 
  \end{align*}
The proof of the second equality $m_\square \circ ( \Id \otimes \eta_\square ) = \Id$ proceeds analogously. 

  \item Coassociativity: 
  \begin{align*}
       (\Delta_\square \otimes \Id) \circ \Delta_\square 
   & 	= (T^{-1} \otimes T^{-1} \otimes T^{-1})\circ (\Delta \otimes \Id) \circ \Delta \circ T  \\
   & 	= (T^{-1} \otimes T^{-1} \otimes T^{-1})\circ (\Id \otimes \Delta) \circ \Delta \circ T  
   	=  (\Id  \otimes \Delta_\square)  \circ \Delta_\square
   \end{align*}
   
  \item Counitarity: 
  \begin{align*}
      (\epsilon_\square \otimes \Id)\circ \Delta_\square 
   &	= (\epsilon\otimes T^{-1})\circ \Delta \circ T 	
   	=  (\Id_k\otimes T^{-1})\circ (\epsilon \otimes \Id) \circ \Delta\circ T
   	=  T^{-1} \circ T = \Id 
  \end{align*}
The proof of $(\Id \otimes \epsilon_\square )\circ \Delta_\square = \Id$ is analogous.

 \end{itemize}

Next we proof that $m_\square$ and $\eta_\square$ are coalgebra maps.  
  \begin{itemize}
   \item We proof $\Delta_\square \circ m_\square = (m_\square \otimes m_\square) \circ \Delta_{\square, \cH\otimes \cH}$. 
   \begin{align*}
       \Delta_\square \circ m_\square 
       & = (T^{-1} \otimes T^{-1}) \circ \Delta \circ m \circ (T \otimes T)\\
       & = (T^{-1} \otimes T^{-1}) \circ  (m \otimes m) \circ \Delta_{\cH\otimes \cH} \circ (T \otimes T)
         =  (m_\square\otimes m_\square) \circ \Delta_{\square, \cH\otimes \cH}.
   \end{align*}
  
   \item Further we verify $\epsilon_\square \circ m_\square = \epsilon_{\square, \cH\otimes \cH}$
   \begin{align*}
       \epsilon_\square \circ m_\square 
       	& 	=  \epsilon \circ m\circ (T \otimes T) 	
		=  \epsilon_{\cH\otimes \cH}\circ (T \otimes T)  
	 	=  \epsilon_{\square, \cH\otimes \cH}.
   \end{align*} 
  
  \item We have to prove $m_{\square}\circ(S_{\square}\otimes \Id) \circ \Delta_{\square} = \eta_{\square} \circ \epsilon_{\square} = m_{\square} \circ (\Id\otimes S_{\square}) \circ \Delta_{\square}$. 
 \begin{align*}
      m_{\square}\circ(S_{\square}\otimes \Id) \circ \Delta_{\square} 
    & 	=  T^{-1} \circ m \circ (S\otimes \Id) \circ \Delta \circ T   	
    	= T^{-1} \circ \eta \circ \epsilon \circ T 
    	=  \eta_\square \circ \epsilon_\square . 
 \end{align*}
 The proof of the second equality is similar. 
 
 \item Finally, we show the character property of $\xi_\square$. It is 
 \begin{align*}
      \xi_\square\circ m_\square 
  & =  \xi\circ m\circ (T \otimes T) = m_{\cR}\circ (\xi\otimes \xi)\circ (T \otimes T) 
     = m_{\cR} \circ (\xi_\square \otimes \xi_\square). 
 \end{align*}
 
 \end{itemize}
 This concludes the proof of Proposition \ref{prop:maindual} . 
\end{proof}
}

%%%%%%%%%%%%%%%%%%%%%%%%%%%%%%%%%%%
%%%%%%%%%%%%%%%%%%%%%%%%%%%%%%%%%%%
%%%%%%%%%%%%%%%%%%%%%%%%%%%%%%%%%%%

\bibliographystyle{abbrv}
\bibliography{library}
\end{document}